\newtheorem*{rep@theorem}{\rep@title}
\newcommand{\newreptheorem}[2]{%
	\newenvironment{rep#1}[1]{%
		\def\rep@title{#2 \ref{##1}}%
		\begin{rep@theorem}}%
		{\end{rep@theorem}}}
\newtheorem{defi}{Definition}[section]
\newtheorem{theo}[defi]{Theorem}
\newtheorem{conj}[defi]{Conjecture}
\newtheorem*{conj*}{Conjecture}
\numberwithin{subcase}{case}
\newtheorem*{theo*}{Theorem}
\newtheorem{claim}{Claim}
\newtheorem*{claim*}{Claim}
\newtheorem{lem}[defi]{Lemma}
\newtheorem*{satz*}{Satz}
\newtheorem{coro}[defi]{Corollary}
\DeclareMathOperator{\iso}{iso}
\DeclareMathOperator{\Iso}{Iso}
\DeclareMathOperator{\deficiency}{def}
\DeclareMathOperator{\nc}{nc}
\DeclareMathOperator{\supp}{supp}
\DeclareRobustCommand{\N}{\mathbb{N}} 
\DeclareRobustCommand{\R}{\mathbb{R}} 
\DeclareRobustCommand{\Z}{\mathbb{Z}}
\title{Fractional matchings, component-factors and edge-chromatic critical graphs}
\author{Antje Klopp \thanks{
				Institute for Mathematics,
       	Paderborn University,
       	Warburger Str. 100,
       	33098 Paderborn,
       	Germany; aklopp@math.uni-paderborn.de},
Eckhard Steffen\thanks{
       	Paderborn Center for Advanced Studies and
				Institute for Mathematics,
       	Paderborn University,
       	Warburger Str. 100,
       	33098 Paderborn,
       	Germany;	es@upb.de, ORCID 0000-0002-9808-7401}}
\begin{document}

\clearpage
\setlength{\hoffset}{0mm}
\newpage

\renewcommand{\sectionmark}[1]		
	{\markboth{\sc \thesection{} #1}{}}

\hypersetup{pageanchor=true} 

\date{}

\maketitle

\begin{abstract}
\noindent 
The first part of the paper studies star-cycle factors of graphs. It characterizes star-cycle factors of a graph $G$ 
and proves upper bounds for the minimum number
of $K_{1,2}$-components in a $\{K_{1,1}, K_{1,2}, C_n\colon n\ge 3\}$-factor of a graph $G$. 
Furthermore, it shows where these components 
are located with respect to the Gallai-Edmonds decomposition of $G$ and it characterizes the edges 
which are not contained in any $\{K_{1,1}, K_{1,2}, C_n\colon n\ge 3\}$-factor of $G$. 

\noindent The second part of the paper proves that every edge-chromatic critical graph $G$ 
has a $\{K_{1,1}, K_{1,2}, C_n\colon n\ge 3\}$-factor, and 
the number of $K_{1,2}$-components is bounded in terms of its fractional matching number. Furthermore, it shows that
for every edge $e$ of $G$, there is a $\{K_{1,1}, K_{1,2}, C_n\colon n\ge 3\}$-factor $F$ with $e \in E(F)$. 
Consequences of these results for Vizing's critical graph conjectures are discussed. 
\end{abstract}

{\bf Keywords}: Factors in graphs, fractional matchings, star-cycle factors, edge-chromatic critical graphs, Vizing's critical graph conjectures. 

%
%
%
%
%
%

\section{Introduction and Motivation}

We consider finite simple graphs. For a graph $G$, $V(G)$ and $E(G)$ denote the set of vertices and the set of edges, respectively. 
For a vertex $v$ of $V(G)$, $E_G(v)$ denotes the set of edges which are incident to $v$. The degree of $v$, denoted by $d_G(v)$, is $|E_G(v)|$.  
The maximum degree of a vertex of $G$ is denoted by $\Delta(G)$ and the minimum degree of a vertex of $G$ is denoted by $\delta(G)$. If $\Delta(G) = \delta(G) = k$, then $G$ is $k$-regular. If $G$ is a 2-regular graph then it is also called a cycle,
and if $G$ is a connected 2-regular graph, then we also call $G$ a circuit.
For $v \in V(G)$, the set of neighbors of $v$ is denoted by $N_G(v)$. Clearly,
$d_G(v) = |E_G(v)| = |N_G(v)|$, for simple graphs. 
For a set $X\subseteq V(G)$, the neighborhood of $X$ is defined as $N_G(X)=\bigcup_{x\in X} N_G(x)$.  For $S \subseteq V(G)$, the set of edges with precisely one end in $S$ is denoted by 
$\partial_G(S)$. For $A, B \subset V(G)$, the set of edges with one end in $A$ and the other in $B$ is denoted by $E_G(A,B)$. Hence,
$E_G(S,V(G)-S) = \partial_G(S)$. If there is no harm of confusion, then we will omit the indices.  

A set $M$ ($M \subseteq E(G)$ or $M \subset V(G)$) is independent, if no two elements of $M$ are adjacent.  An independent set of edges is
also called a matching of $G$. The maximum cardinality of a matching of $G$ is the matching number of $G$, which is denoted by $\mu(G)$.  A
matching $M$ with $|M|=\mu(G)$ is a maximum matching of $G$. The number of vertices which are not incident to an edge of a maximum matching
is the matching-deficiency of $G$, and it is denoted by $\deficiency(G)$. Clearly, $\deficiency(G) = |V(G)|- 2\mu(G)$.

A fractional matching of $G$ is a function $f: E(G) \rightarrow [0,1]$ such that $\sum_{e \in E_G(v)}f(e) \leq 1$ for all $v \in V(G)$. If $f(e) \in \{0,1\}$
for each edge, then $f$ is the characteristic function of a matching of $G$. The fractional matching number $\mu_f(G)$ is 
$\sup\{\sum_{e \in E(G)}f(e): f \text{ is a fractional matching of } G\}$. Clearly, $\mu_f(G) \leq \frac{1}{2}|V(G)|$ and if 
$\mu_f(G) = \frac{1}{2}|V(G)|$, then $f$ is a fractional perfect matching. For a fractional matching $f$ 
the set $\{e: e \in E(G) \text{ and } f(e) \not = 0\}$ is the support of $f$ and it is denoted by $\supp(f)$.  

\begin{theo} [\cite{Scheinerman_Buch} (Theorem 2.1.5)] \label{max_frac_matching} For any graph $G$, $2 \mu_f(G)$ is an integer. 
Moreover, there is a fractional matching $f$ for which 
$\sum_{e \in E(G)}f(e) = \mu_f(G)$ and $f(e) \in \{0, \frac{1}{2}, 1\}$ for every $e \in E(G)$. 
\end{theo}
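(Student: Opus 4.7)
The cleanest approach is to reduce to bipartite fractional matching, where integrality is classical. First I would construct the bipartite double cover $\tilde G$ of $G$: set $V(\tilde G) = V(G) \times \{0,1\}$ and, for each edge $uv \in E(G)$, add the two edges $(u,0)(v,1)$ and $(u,1)(v,0)$ to $E(\tilde G)$. Then $\tilde G$ is bipartite with parts $V(G) \times \{0\}$ and $V(G) \times \{1\}$.

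Next I would establish $\mu_f(\tilde G) = 2\mu_f(G)$. In one direction, lift any fractional matching $f$ of $G$ by setting $\tilde f((u,0)(v,1)) = \tilde f((u,1)(v,0)) = f(uv)$: the degree sum at $(v,i)$ equals $\sum_{e \ni v} f(e) \le 1$, and the total weight doubles. In the other direction, given any fractional matching $g$ of $\tilde G$, symmetrize by averaging $g$ on the two copies of each edge of $G$; the resulting $g'$ is again a fractional matching (each vertex sum becomes the average of two numbers $\le 1$) with the same total weight, and it descends to a fractional matching of $G$ of weight $\tfrac12 \sum g'$.

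Since $\tilde G$ is bipartite, its fractional matching polytope is integer (its vertex-edge incidence matrix is totally unimodular), so $\mu_f(\tilde G) = \mu(\tilde G) \in \N$, which proves $2\mu_f(G) \in \N$. For the second assertion, take a maximum (integer) matching $M$ of $\tilde G$ and set
\[
f(uv) \;=\; \tfrac12\, \bigl| M \cap \{(u,0)(v,1),\,(u,1)(v,0)\} \bigr| \qquad \text{for every } uv \in E(G).
\]
Then $f(e) \in \{0, \tfrac12, 1\}$; at each $v \in V(G)$ one has $2 \sum_{e \ni v} f(e) = d_M((v,0)) + d_M((v,1)) \le 2$; and the total weight equals $\tfrac12 |M| = \tfrac12 \mu_f(\tilde G) = \mu_f(G)$, as required.

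The principal obstacle in a fully self-contained write-up is justifying integrality of the bipartite fractional matching polytope without invoking total unimodularity. An attractive direct alternative is to pick an extreme optimum $f^*$ of the fractional matching LP on $G$ itself and analyze its fractional-support subgraph $H = \{e : 0 < f^*(e) < 1\}$: a short perturbation argument rules out even cycles in $H$ and paths in $H$ between vertices with slack constraints, forcing every component of $H$ to be an odd cycle along which all vertex constraints are tight, which in turn forces $f^* \equiv \tfrac12$ on $H$ and yields the half-integrality.
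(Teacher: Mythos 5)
Your proposal is correct, but note that the paper does not prove this statement at all: it is quoted from Scheinerman--Ullman (Theorem 2.1.5), and the argument the paper actually reproduces and extends (in the $n=0$ case of the proof of Theorem \ref{fm_edge}) is the direct support-perturbation one --- take a maximum fractional matching with as many zero-weight edges as possible, kill even circuits and configurations of two odd circuits joined by a path via an alternating $\pm 1$ (resp.\ $\pm\frac12$) perturbation, and conclude that the components of the support are $K_{1,1}$'s and odd circuits, which forces half-integrality. Your primary route is genuinely different: you pass to the bipartite double cover $\tilde G$, prove $\mu_f(\tilde G)=2\mu_f(G)$ by lifting and by symmetrizing-then-projecting, invoke integrality of the bipartite fractional matching polytope to get $\mu_f(\tilde G)=\mu(\tilde G)\in\N$, and fold a maximum matching $M$ of $\tilde G$ back to a $\{0,\tfrac12,1\}$-valued optimum of $G$; every step checks out, and the key feasibility estimate $2\sum_{e\ni v}f(e)=d_M((v,0))+d_M((v,1))\le 2$ is exactly right. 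What each approach buys: yours is short and modular but imports bipartite integrality (total unimodularity, or equivalently K\"onig--Egerv\'ary via LP duality) as a black box; the perturbation argument is elementary and self-contained, and --- more relevantly for this paper --- it is the version that strengthens to keep a prescribed edge $e'$ in the support, which is precisely what Theorem \ref{fm_edge} needs and what the double-cover reduction does not obviously deliver. The ``direct alternative'' you sketch in your last paragraph is essentially the book's proof and the one the paper builds on.
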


Let $G$ be a graph and $g,f:V(G)\to \Z$ be two functions such that $0 \leq g(v)\le f(v)$ for all $v\in V(G)$.
A $(g,f)$-factor is a spanning subgraph $F$ of $G$ that satisfies 
$g(v)\le d_F(v)\le f(v) \text{ for all } v\in V(G)$. If $g(v) = a$ and $f(v)=b$ for all $v \in V(G)$, then $F$ is a $[a,b]$-factor, and if $a=b=k$,
then $F$ is a $k$-factor of $G$. Clearly, if $F$ is a 1-factor, then $E(F)$ is a perfect matching of $G$. If $F$ is a factor of a graph $G$, then a path is $F$-alternating, if its edges are in $F$ and $E(G)-F$ alternately. 

For a set $\cal{S}$ of connected graphs, a spanning subgraph $F$ of $G$ is called an $\cal{S}$-factor if each component of $F$ is isomorphic to an element of $\cal{S}$. 
If $H \in \cal{S}$, then a component of $F$ which is isomorphic to $H$ is called an $H$-component of $F$. 
A component is trivial if it consists of a single vertex and non-trivial otherwise. 
The set of trivial components of $G$ is denoted by $\Iso(G)$ and $\iso(G)$ denotes $|\Iso(G)|$.

The complete bipartite graph with bipartition $(A,B)$ and $|A|=r$, $|B|=s$ is denoted by $K_{r,s}$. 
In case of $r=1$,  $K_{1,s}$ is called a star and the vertex of degree $s$ is its center vertex. 
For $K_{1,1}$, either of the two vertices can be regarded as its center vertex. A $\{K_{1,1},\dots, K_{1,t}, C_m :  m \geq 3 \}$-factor of $G$ is called a star-cycle factor. 

For a set $S$ of vertices let $G[S]$ and $G-S$ be the subgraphs of $G$ induced by $S$ and $V(G)-S$, respectively. The following theorems
characterize some component factors of graphs. 

\begin{theo} [\cite{T1953}] \label{Tutte_53} A graph $G$ has a $\{K_{1,1}, C_m : m\geq 3\}$-factor if and only if $\iso(G-S) \leq |S|$ for all $S \subseteq V(G)$. 
\end{theo}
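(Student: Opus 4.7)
The theorem splits into a straightforward necessity and a sufficiency whose core is a Hall-type argument on a bipartite auxiliary graph. The main obstacle is translating the isolated-vertex hypothesis into the neighbourhood condition needed for Hall.

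Necessity. Let $F$ be a $\{K_{1,1}, C_m : m\ge 3\}$-factor of $G$ and fix $S \subseteq V(G)$. I would show component by component that each component $C$ of $F$ contains at most $|V(C)\cap S|$ vertices isolated in $G-S$. If $C=\{u,v\}$ is a $K_{1,1}$-component, then at most one of $u,v$ can lie outside $S$, since otherwise the edge $uv$ witnesses that neither vertex is isolated in $G-S$. If $C$ is a cycle, then every vertex of $C$ isolated in $G-S$ has both cyclic neighbours in $S$, and a direct incidence count along $C$ yields at most $|V(C)\cap S|$ such vertices. Summation over components gives $\iso(G-S)\le|S|$.

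Sufficiency. I would form the bipartite graph $H$ with parts $V'=\{v':v\in V(G)\}$ and $V''=\{v'':v\in V(G)\}$ and edge set $\{u'v'':uv\in E(G)\}$. A perfect matching of $H$ encodes a fixed-point-free permutation $\pi$ of $V(G)$ with $v\pi(v)\in E(G)$ for every $v$; the $2$-orbits of $\pi$ produce $K_{1,1}$-components and the orbits of length $\ge 3$ produce $C_m$-components of the desired factor. By Hall's theorem, $H$ has a perfect matching if and only if $|N_G(A)|\ge|A|$ for every $A\subseteq V(G)$.

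Deriving Hall's condition from the hypothesis is the crux. The naive choice $S=N_G(A)$ only isolates those vertices of $A$ without a neighbour in $A$, and so bounds only $|A-N_G(A)|$, not $|A|$. The key step I would attempt is to set $B:=A-N_G(A)$ and $S:=N_G(B)$. Since $B\subseteq A$ and $B$ has no neighbour in $A$, we obtain $N_G(B)\cap A=\emptyset$, so $S\subseteq N_G(A)-A$; meanwhile every vertex of $B$ has all its $G$-neighbours in $S$ and lies outside $S$, hence is isolated in $G-S$. The hypothesis $\iso(G-S)\le|S|$ then gives $|B|\le|S|\le|N_G(A)|-|A\cap N_G(A)|$, and since $|B|=|A|-|A\cap N_G(A)|$, this rearranges to $|A|\le|N_G(A)|$, completing the argument.
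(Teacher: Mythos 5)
Your proof is correct. The paper states Theorem \ref{Tutte_53} only as a cited classical result of Tutte and supplies no proof of its own, and your route — the bipartite double cover plus Hall's theorem, with the orbits of the resulting fixed-point-free permutation furnishing the $K_{1,1}$- and $C_m$-components — is the standard argument; in particular the delicate step of converting $\iso(G-S)\le|S|$ into Hall's condition via $B=A-N_G(A)$ and $S=N_G(B)$ checks out (every vertex of $B$ is indeed isolated in $G-S$ and $S\subseteq N_G(A)-A$), as does the incidence count on cycle components in the necessity direction.
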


In terms of fractional perfect matchings, Theorem \ref{Tutte_53} is equivalent to the following formulation.

\begin{theo}[\cite{Scheinerman_Buch}] \label{Scheinerman}
 A graph $G$ has a fractional perfect matching if and only if  $\iso(G-S) \leq |S|$ for all $S \subseteq V(G)$. 
\end{theo}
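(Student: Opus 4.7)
The plan is to derive Theorem \ref{Scheinerman} from Tutte's Theorem \ref{Tutte_53} by proving that, for any graph $G$, the existence of a $\{K_{1,1}, C_m : m \ge 3\}$-factor is equivalent to the existence of a fractional perfect matching. Once this equivalence is established, the two theorems have the same Tutte-type condition characterizing them, and the result follows immediately.

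For the ``factor $\Rightarrow$ fractional perfect matching'' direction, I would start from a $\{K_{1,1}, C_m : m\ge 3\}$-factor $F$ of $G$ and define $f: E(G) \to [0,1]$ by $f(e) = 1$ if $e$ is the edge of a $K_{1,1}$-component of $F$, $f(e) = \tfrac{1}{2}$ if $e$ lies on a cycle component of $F$, and $f(e) = 0$ otherwise. A quick check at each vertex $v$ shows $\sum_{e \in E_G(v)} f(e) = 1$ (the vertex is either matched by a $K_{1,1}$ edge with weight $1$, or lies on a cycle and has two incident half-weight edges), so $f$ is a fractional matching with $\sum_e f(e) = \tfrac{1}{2}|V(G)|$, i.e.\ a fractional perfect matching.

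For the converse direction, I would invoke Theorem \ref{max_frac_matching} to pick a fractional perfect matching $f$ with $f(e) \in \{0, \tfrac{1}{2}, 1\}$ for every $e$. Since $f$ is perfect, the constraint $\sum_{e \in E_G(v)} f(e) \le 1$ is tight at every vertex $v$. The key structural observation is then a case analysis at each $v$: if some incident edge has weight $1$, all other incident edges must have weight $0$, so $v$ is incident in $\supp(f)$ to exactly one edge (a $K_{1,1}$-component); otherwise, $v$ is incident to exactly two edges of weight $\tfrac{1}{2}$. Thus every vertex of $G$ has degree $1$ or $2$ in $\supp(f)$, and weight-$1$ edges are isolated from weight-$\tfrac{1}{2}$ edges. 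The degree-$2$ part therefore decomposes into disjoint cycles, each of length at least $3$ since $G$ is simple, so $\supp(f)$ is a $\{K_{1,1}, C_m : m \ge 3\}$-factor of $G$.

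Having established both directions of the equivalence, Theorem \ref{Scheinerman} follows directly by substituting ``fractional perfect matching'' for ``$\{K_{1,1}, C_m : m\ge 3\}$-factor'' in Theorem \ref{Tutte_53}. I do not anticipate any real obstacle: the only subtle point is the structural analysis of the $\{0,\tfrac{1}{2},1\}$-valued support, which is routine once Theorem \ref{max_frac_matching} is available.
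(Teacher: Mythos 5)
Your proof is correct and follows exactly the route the paper intends: the paper presents Theorem \ref{Scheinerman} as the fractional-matching reformulation of Tutte's Theorem \ref{Tutte_53} (citing the textbook rather than proving it), and your argument simply makes that equivalence explicit via the standard $\{0,\tfrac{1}{2},1\}$-valued support analysis from Theorem \ref{max_frac_matching}. No gaps.
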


The following theorems characterize graphs which satisfy relaxed conditions.

\begin{theo}[\cite{AE1980}]\label{theorem_Akiyama_Era}
 A graph $G$ has a $\{K_{1,1}, K_{1,2}, C_m\colon m\ge 3\}$-factor if and only if $\iso(G-S)  \le 2 \vert S \vert$ for all $S \subseteq V(G)$.
\end{theo}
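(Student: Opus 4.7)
My plan is to establish the two directions separately, reducing the sufficiency to the existence of a $[1,2]$-factor.

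\textbf{Necessity.} Let $F$ be a $\{K_{1,1}, K_{1,2}, C_m\colon m \ge 3\}$-factor of $G$ and let $S \subseteq V(G)$. Since $F$ is a spanning subgraph of $G$, every vertex isolated in $G - S$ is also isolated in $F - S$, so it suffices to bound $\iso(F - S)$. I would use a discharging argument: each vertex $v$ isolated in $F - S$ distributes a unit charge evenly among its $F$-neighbors, all of which lie in $S$. A short case analysis on the $F$-component containing a given $s \in S$ shows that $s$ receives total charge at most $2$, the tight case being when $s$ is the central vertex of a $K_{1,2}$-component whose two leaves are both isolated in $G - S$. Summing over $s \in S$ gives $\iso(G - S) \le 2|S|$.

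\textbf{Sufficiency.} I proceed in two steps.

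\emph{Step 1: Produce a $[1,2]$-factor.} The condition $\iso(G - S) \le 2|S|$ for all $S$ is precisely a Tutte-type criterion ensuring that $G$ admits a spanning subgraph $F_0$ with $1 \le d_{F_0}(v) \le 2$ for every $v$. I would derive this either from Tutte's $(g,f)$-factor theorem with $g \equiv 1$ and $f \equiv 2$ (translating the resulting defect inequality into the isolated-vertex inequality), or by constructing an auxiliary graph obtained by splitting each vertex of $G$ into two copies with matched neighborhoods and appealing to Theorem~\ref{Tutte_53}.

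\emph{Step 2: Refine the $[1,2]$-factor.} Every component of $F_0$ is a cycle (already allowed) or a path on $k \ge 2$ vertices. Any such path can be partitioned into consecutive blocks of $2$ or $3$ vertices, each block inducing a $K_{1,1}$ or a $K_{1,2}$, because every integer $k \ge 2$ is a nonnegative integer combination of $2$ and $3$. Deleting the edges joining distinct blocks yields the desired star-cycle factor.

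The principal obstacle is Step 1. Proving the existence of a $[1,2]$-factor requires nontrivial Tutte-type machinery: either a careful analysis of the $(S,T)$-pairs that minimize the $(g,f)$-factor defect, or a clever auxiliary construction reducing the problem to a perfect-matching problem. By contrast, the refinement in Step 2 and the necessity argument are both elementary.
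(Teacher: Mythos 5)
Your proof is correct. Note that the paper does not prove this statement at all — it is quoted from Akiyama and Era \cite{AE1980} as a known result — so there is no internal proof to compare against; but your route is the standard one and matches the machinery the paper itself deploys elsewhere: Step 1 is exactly an application of Lov\'asz's $(g,f)$-factor theorem (the paper's Theorem \ref{gf_factor_theorem}) with $g\equiv 1<f\equiv 2$, where $q^\star\equiv 0$ and minimizing $\gamma(S,T)$ over $T$ for fixed $S$ yields precisely $2|S|-\iso(G-S)\ge 0$, and Step 2 is the decomposition of $[1,2]$-factors into star-cycle factors that the paper states explicitly at the start of Section \ref{Edge-colorings}. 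Your necessity argument via discharging is also sound, with the correct extremal configuration.
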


\begin{theo} [\cite{LasVergnas1978, Amahashi1982}] \label{K_1n_factor} Let $n \geq 2$ be an integer. 
A graph $G$ has a $\{K_{1,1}, \dots, K_{1,n}\}$-factor if and only if $\iso(G-S) \leq n|S|$ for all $S \subseteq V(G)$. 
\end{theo}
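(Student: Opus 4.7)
The plan has two parts.

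\emph{Necessity.} I would use a direct counting argument. Suppose $F$ is a $\{K_{1,1}, \dots, K_{1,n}\}$-factor of $G$ and fix $S \subseteq V(G)$; split $S = S_c \cup S_\ell$ according to whether the vertex is a star-centre or a star-leaf in $F$. Any isolated vertex $v$ of $G-S$ has $N_G(v) \subseteq S$, and hence either (i) $v$ is a leaf of $F$ whose centre lies in $S_c$, or (ii) $v$ is a centre of $F$ with at least one leaf, all of them lying in $S_\ell$. A centre in $S_c$ carries at most $n$ leaves, and distinct isolated centres of type (ii) use pairwise disjoint non-empty leaf-sets inside $S_\ell$. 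Summing gives $\iso(G-S) \le n|S_c| + |S_\ell| \le n|S|$, as required.

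\emph{Sufficiency.} The plan is to reformulate factor existence as a matching problem in an auxiliary bipartite graph, then apply a Hall-type argument refined by an extremality principle. Let $B := B(G,n)$ be the bipartite graph with left part $X := V(G)$ and right part $Y := V(G) \times \{1,\dots,n\}$, with $v \in X$ adjacent to $(u,i) \in Y$ iff $v = u$ or $uv \in E(G)$. An $X$-saturating matching $M$ of $B$ encodes a map $\phi \colon V(G) \to V(G)$ with $\phi(v) \in \{v\} \cup N_G(v)$ and $|\phi^{-1}(u)| \le n$ for all $u$: the slot index encodes the capacity $n$, and the self-adjacency $v \sim (v,i)$ lets $v$ declare itself a centre. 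The desired factor corresponds precisely to such a $\phi$ which is idempotent, $\phi \circ \phi = \phi$, and whose fixed-point set $C := \{c : \phi(c) = c\}$ satisfies $\phi^{-1}(c) \supsetneq \{c\}$ for every $c \in C$.

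For any $T \subseteq X$ we have $N_B(T) = (T \cup N_G(T)) \times \{1,\dots,n\}$, so $|N_B(T)| \ge n|T|$; hence Hall's condition on $B$ is automatic and an $X$-saturating $M$ always exists. The substantive step is to upgrade $M$ to an idempotent one with no empty star. I would pick $M$ maximising $|C|$ and analyse the functional digraph of $\phi$: any chain or cycle of $\phi$-images failing to terminate at a fixed point translates into an $M$-alternating path in $B$ along which $M$ can be re-routed to create an additional fixed point, contradicting maximality. The final cleanup, which eliminates any $c \in C$ with empty preimage in $V(G) \setminus C$, is performed by an analogous $M$-alternating re-routing that swaps $c$ into a neighbouring star.

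\emph{Main obstacle.} The delicate point is to show that whenever this $M$-alternating re-routing is obstructed, the obstruction yields an explicit set $S \subseteq V(G)$ with $\iso(G - S) > n|S|$, contradicting the hypothesis. Concretely, $S$ should consist of the vertices owning the fully saturated slots along the non-augmentable subgraph of $B$; the capacity $n$ on the right side of $B$ matches the factor of $n$ in the bound, and the self-edges translate the blocked vertices into isolated vertices of $G-S$. Making this K\"onig-type translation precise is the core technical step; once it is in place, the rest of the proof is routine bipartite matching theory.
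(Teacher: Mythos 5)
The paper does not prove this statement; it is quoted from Las Vergnas and from Amahashi--Kano, so your proposal has to stand on its own. Your necessity argument is complete and correct: every isolated vertex of $G-S$ is either a leaf whose centre lies in $S_c$ or a centre whose (nonempty, pairwise disjoint) leaf set lies in $S_\ell$, giving $\iso(G-S)\le n|S_c|+|S_\ell|\le n|S|$.

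The sufficiency direction, however, has a genuine gap, and it is exactly the one you flag yourself. Hall's condition for $B(G,n)$ holds for \emph{every} graph $G$ (since $|N_B(T)|=n|T\cup N_G(T)|\ge n|T|$), so the $X$-saturating matching $M$ and the map $\phi$ exist unconditionally; the hypothesis $\iso(G-S)\le n|S|$ enters nowhere in your construction. All of the theorem's content is therefore concentrated in the step you defer --- showing that an obstruction to making $\phi$ idempotent and star-nonempty yields a set $S$ with $\iso(G-S)>n|S|$ --- and that step is not a routine K\"onig translation: the re-routings change preimage sizes, must preserve the capacity bound globally, and the ``blocked'' configuration is a saturated substructure of the functional digraph of $\phi$, not of $B$, so no off-the-shelf deficiency formula applies. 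Until that step is written out, you have a reformulation of the problem, not a proof. There is also a concrete flaw in the encoding: a fixed point $c$ of an idempotent $\phi$ occupies one of its own $n$ slots, so $|\phi^{-1}(c)\setminus\{c\}|\le n-1$ and your construction can only ever produce a $\{K_{1,1},\dots,K_{1,n-1}\}$-factor; the existence of such a factor requires the stronger hypothesis $\iso(G-S)\le (n-1)|S|$, so the reduction as stated cannot close even in principle. (This is fixable --- give fixed points capacity $n+1$, or encode centres without a self-slot --- but it shows the framework has not been checked against the statement.) A workable alternative is the classical route: induct on $|V(G)|$ after choosing a maximal set $S^\ast$ with $\iso(G-S^\ast)=n|S^\ast|$, or derive the result from Theorem~\ref{star_cycle_factor} with $f\equiv n$.
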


These results had been generalized by Berge and Las Vergnas \cite{Berge_star_cycle_factor1978} to star-cycle factors.

\begin{theo} [\cite{Berge_star_cycle_factor1978}] \label{star_cycle_factor} Let $G$ be a graph and $f: V(G) \rightarrow \{1,2,3\dots\}$ 
be a function, and let $W= \{v: v\in V(G) \text{ and } f(v)=1\}$. The graph $G$ has a star-cycle factor $F$ such that

$(i)$ $d_F(v) \leq f(v)$ if $v$ is the center vertex of a star component of $F$, and 

$(ii)$ $V(C) \subseteq W$ for each circuit component $C$ of $F$

if and only if $\iso(G-S) \leq \sum_{v \in S}f(v)$ for all $S\subseteq V(G)$. 
\end{theo}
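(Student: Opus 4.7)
Suppose $F$ is a star-cycle factor of $G$ satisfying (i) and (ii), fix $S \subseteq V(G)$, and let $I = \iso(G-S)$; every $v \in I$ has all its $F$-neighbors in $S$. I will prove $|I \cap V(K)| \le \sum_{s \in V(K) \cap S} f(s)$ for each component $K$ of $F$, then sum over $K$ to obtain $\iso(G-S) \le \sum_{s \in S} f(s)$. For a star component with center $c$ and leaves $L$: if $c \in S$, then $|I \cap V(K)| \le |L| = d_F(c) \le f(c)$ by (i); if $c \in I$, then every leaf lies in $S$ and $\sum_{s \in V(K) \cap S} f(s) \ge |L| \ge 1 = |I \cap V(K)|$; otherwise a leaf in $I$ would force $c \in S$, so $I \cap V(K) = \emptyset$. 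For a cycle component $K$, (ii) gives $f \equiv 1$ on $V(K)$, and since each $v \in I \cap V(K)$ has both cycle-neighbors in $S$, sending $v$ to its cyclic successor injects $I \cap V(K)$ into $V(K) \cap S$, giving $|I \cap V(K)| \le |V(K) \cap S|$.

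\textbf{Sufficiency.} Assume $\iso(G-S) \le \sum_{v \in S} f(v)$ for all $S \subseteq V(G)$. The plan is a proof by contradiction via a Tutte--Berge-style alternating argument. Pick a collection $\mathcal{F}$ of vertex-disjoint stars and cycles in $G$ satisfying (i) and (ii) that maximizes the number of covered vertices; if $\mathcal{F}$ already covers $V(G)$, we are done. Otherwise let $U \ne \emptyset$ be the set of uncovered vertices. Maximality rules out adding a new star or cycle, or ``shifting'' leaves between existing components, so alternating walks out of $U$ through the components of $\mathcal{F}$ get stuck at a structured obstruction. From this obstruction I would read off a set $S_0 \subseteq V(G)$---consisting roughly of the reachable star centers (weighted by their $f$-values) together with ``bottleneck'' vertices along the reachable cycles---for which $\iso(G-S_0) > \sum_{v \in S_0} f(v)$, contradicting the hypothesis.

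\textbf{Main obstacle.} The main difficulty lies in the alternating-walk analysis: unlike in the ordinary matching case, one must simultaneously handle stars of variable size (whose centers carry capacity $f(c)$ rather than $1$), cycles that can be traversed in either direction and ``rotated'' to accommodate an incoming alternating walk, and condition (ii) which forbids cycles through $V(G) \setminus W$. An alternative route is to build an auxiliary graph $H$ by attaching to each $v \in V(G) \setminus W$ a gadget on $f(v)-1$ new vertices and then apply Theorem~\ref{Tutte_53} to $H$; this runs into a comparable hurdle, because a naive pendant gadget already violates the Tutte condition at $T = \{v\}$, so one needs a gadget with both an ``internal self-matching'' mode (used when $v$ is a star leaf in $G$) and an ``attach to $v$'' mode (used when $v$ is a star center). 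Either way, the technical heart of the proof is translating the weighted isolated-vertex condition $\iso(G-S) \le \sum_{s \in S} f(s)$ into the existence of a factor respecting the structural constraints (i) and (ii).
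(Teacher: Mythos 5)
The paper does not prove this theorem; it is quoted from Berge and Las Vergnas, so there is no internal proof to compare against, and the proposal must stand on its own. Your necessity direction does: the component-by-component bound $\vert I\cap V(K)\vert \le \sum_{s\in V(K)\cap S} f(s)$, with $I=\Iso(G-S)$, is verified correctly in every case (star with center in $S$, isolated center with all leaves forced into $S$, the vacuous remaining case, and the cyclic-successor injection on cycle components, where (ii) guarantees $f\equiv 1$), and summing over the components of the spanning subgraph $F$ gives $\iso(G-S)\le\sum_{v\in S}f(v)$. This half is complete.

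The sufficiency direction, however, is the substantive half of the theorem, and it is not proved. What you give is a plan: take a partial star-cycle packing covering the most vertices, run alternating walks from an uncovered vertex, and ``read off'' a set $S_0$ with $\iso(G-S_0)>\sum_{v\in S_0}f(v)$ from the obstruction. The entire content of the theorem lies in the steps you defer: defining the alternating structure precisely when the components are stars of variable capacity $f(c)$ and rotatable cycles confined to $W$, showing that maximality forces the reachable set into the claimed shape, and verifying that the extracted $S_0$ really violates the hypothesis (in particular that the relevant uncovered and reachable vertices become \emph{isolated}, not merely deficient, after deleting $S_0$). None of this is carried out, and you acknowledge as much under ``Main obstacle.'' The alternative reduction to Theorem \ref{Tutte_53} via gadgets is likewise only gestured at, and as you note yourself the naive pendant gadget fails the Tutte-type condition, so no working gadget is exhibited. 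As it stands, the proposal establishes only the easy direction of the equivalence.
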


For each finite graph $G$, if $\iso(G) =0$, then there is an integer $n$ such that $\iso(G-S) \leq n|S|$ for all $S \subseteq V(G)$. 
Consequently, the following statement is true. 
 
\begin{coro} \label{exist_star_cycle_factor}
Every graph without trivial components has a star-cycle factor. 
\end{coro}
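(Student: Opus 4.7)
The plan is to deduce the corollary directly from the Berge--Las~Vergnas characterization (Theorem~\ref{star_cycle_factor}) by choosing a constant weight function $f$ that is large enough to dominate the isolated-vertex count in every deletion $G-S$.

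First, I would set $n := |V(G)|$ and define $f\colon V(G)\to\{1,2,3,\dots\}$ by $f(v) := n$ for every $v\in V(G)$. With this choice, the condition in Theorem~\ref{star_cycle_factor} reduces to verifying that $\iso(G-S)\le n|S|$ for every $S\subseteq V(G)$, since $\sum_{v\in S}f(v)=n|S|$.

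Next I would split on whether $S$ is empty. If $S=\emptyset$, then $G-S=G$, and the hypothesis $\iso(G)=0$ gives $\iso(G-S)=0\le 0=\sum_{v\in S}f(v)$. If $S\neq\emptyset$, then trivially $\iso(G-S)\le |V(G)-S|\le |V(G)| = n \le n|S|$, so the required inequality holds in this case as well. This is exactly the observation that the authors anticipate in the sentence preceding the corollary.

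Applying Theorem~\ref{star_cycle_factor} with this choice of $f$ yields a star-cycle factor $F$ of $G$ in which every star component $K_{1,s}$ satisfies $s\le n$; in particular $F$ is a $\{K_{1,1},\dots,K_{1,n},C_m\colon m\ge 3\}$-factor, hence a star-cycle factor. There is no real obstacle here: the only thing to be careful about is the edge case $S=\emptyset$, where one must use the assumption $\iso(G)=0$ explicitly rather than the crude bound $|V(G)-S|\le n|S|$, which fails when $|S|=0$. Once that case is dispatched, the bulk of the work is already done by Theorem~\ref{star_cycle_factor}.
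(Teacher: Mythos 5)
Your argument is correct and matches the paper's (implicit) proof: the authors simply observe that $\iso(G)=0$ guarantees the existence of an integer $n$ with $\iso(G-S)\le n|S|$ for all $S$, and then invoke the Berge--Las~Vergnas characterization. You have merely made the choice $n=|V(G)|$ and the two-case verification explicit, which is exactly the intended reasoning.
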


The paper is organized as follows. Section \ref{Factors} studies general graphs
while Section \ref{Edge-colorings} studies edge-chromatic critical graphs.  
The edge-chromatic number $\chi'(G)$ of a graph $G$ is the
minimum number $k$ of matchings which are needed to cover the edge set of $G$.
In 1965, Vizing \cite{V1965} proved that $\chi'(G) \in \{\Delta(G), \Delta(G) + 1\}$ for a graph $G$.
For $k \geq 2$, a graph $G$ is $k$-critical, if $\Delta(G)=k$, $\chi'(G) = k+1$ and $\chi'(H) \leq k$ for each proper subgraph $H$ of $G$.
We often say that $G$ is a critical graph, if there is a $k$, such that $G$ is a $k$-critical graph.

In Section \ref{Factors} we characterize graphs with specific star-cycle factors in terms of their fractional matching number.
In particular, we give an upper bound for the size of a star and for the number of star components which are different from $K_{1,1}$,
and we locate the star components of a factor with respect to the Gallai-Edmonds decomposition of $G$. We further address
the question for which $e \in E(G)$ there is a specific star-cycle factor $F$ with $e \in E(F)$. 

In addition to these statements, the following theorems are the main results of this section regarding the application to questions on factors of edge-chromatic critical graphs. 
Let $\min(G, K_{1,2})$ denote the minimum number of $K_{1,2}$-components in a
$\{K_{1,1}, K_{1,2}, C_m : m \ge 3\}$-factor of $G$.

\begin{reptheorem}{kez_condition}
	If a graph $G$ has a $\{K_{1,1}, K_{1,2}, C_m : m \ge 3\}$-factor,
	then $\mu_f(G) = \frac{1}{2}(|V(G)|-\min(G, K_{1,2}))$. 
\end{reptheorem}

\begin{reptheorem}{Lemma_condition_not_factor}
	Let $G$ be a graph that has a $\{K_{1,1}, K_{1,2}, C_m\colon m\ge 3\}$-factor. 
	For $e \in E(G)$, say $e=uv$, there is no $\{K_{1,1}, K_{1,2}, C_m\colon m\ge 3\}$-factor which contains $e$ 
	if and only if there is a subset $S$ of $V(G)$ that satisfies
	\begin{enumerate}
		\item [$(i)$] $u,v\in S$
		\item [$(ii)$] $2\vert S\vert -2\le \iso(G-S) \le 2\vert S\vert$.
	\end{enumerate}
	Furthermore, the inequalities of $(ii)$ are tight. 
\end{reptheorem}

In Section \ref{Edge-colorings}   we prove
that every edge chromatic critical graph has $\{K_{1,1}, K_{1,2}, C_m\colon m\ge 3\}$-factor.
The following two theorems are the main results of the paper. The maximum cardinality of an independent set of vertices is the independence number of $G$ which is denoted by $\alpha(G)$.

\begin{reptheorem}{first_thm}
	Let $G$ be a critical graph. Then $G$ has a $\{K_{1,1}, K_{1,2}, C_m\colon m\ge 3\}$-factor with 
	$\min(G,K_{1,2}) = |V(G)| - 2\mu_f(G)$. In particular,
	$\min(G, K_{1,2}) \le \frac{1}{5} \vert V(G)\vert$ and  $\alpha(G)\le \frac{3}{5}\vert V(G)\vert$ for all $\Delta(G)\ge2$.
\end{reptheorem}

The statement $\alpha(G)\le \frac{3}{5}\vert V(G)\vert$ for all critical graphs 
was first proved by Woodall \cite{W2011_1}.

\begin{reptheorem}{main_new}
	Let $G$ be a critical graph. For every edge $e$ there is a $\{K_{1,1}, K_{1,2}, C_m\colon m\ge 3\}$-factor 
	$F$ with $e\in E(F)$.
\end{reptheorem}

These results have some consequences for Vizing's critical graph conjectures, see \cite{Cao_etal_survey_2019}. 

\begin{conj} [\cite{V1968}] \label{2FC}
If $G$ is a critical graph, then $G$ has a 2-factor.
\end{conj}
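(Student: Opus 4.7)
The plan is to leverage Theorem \ref{first_thm}, which already supplies every critical graph $G$ with a $\{K_{1,1},K_{1,2},C_m: m\ge 3\}$-factor $F$ satisfying $\min(G,K_{1,2})\le |V(G)|/5$. A 2-factor is exactly a $\{C_m:m\ge 3\}$-factor, so what is missing is the absorption of every $K_{1,1}$- and $K_{1,2}$-component of $F$ into the cyclic part. I would therefore choose $F$ so that the pair (number of $K_{1,2}$-components, number of $K_{1,1}$-components) is lexicographically minimal, and attempt to derive a contradiction from the assumption that this minimum is nonzero. The case $\Delta(G)=2$ is immediate, since $2$-critical graphs are odd cycles and are themselves $2$-factors.

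Suppose $F$ contains a star component $T$ with center $v$. For $\Delta(G)\ge 3$ one has $\delta(G)\ge 2$, so $v$ has a neighbor $w$ outside $T$. The idea is to follow an $F$-alternating trail starting with the non-$F$-edge $vw$ and swap $F$-edges with non-$F$-edges along it so that $T$ is absorbed into a cycle component of $F$, contradicting the minimality of $F$. Theorem \ref{main_new} ensures that $vw$ lies in \emph{some} $\{K_{1,1},K_{1,2},C_m\}$-factor, and Vizing's adjacency lemma forces $v$ to have many neighbors of degree $\Delta(G)$; the resulting richness of the link of $v$ should provide the flexibility required to close the exchange. In parallel, I would use the Gallai--Edmonds decomposition of $G$, whose role in locating $K_{1,2}$-components is made precise in Section \ref{Factors}, together with Theorem \ref{Lemma_condition_not_factor}, to rule out obstructing sets $S$ with $2|S|-2 \le \iso(G-S)$, since such sets ought to be incompatible with the abundance of high-degree neighbors imposed by edge-chromatic criticality.

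The hard part, and the reason Conjecture \ref{2FC} has remained open since 1968, is this final exchange step. Vizing-type adjacency lemmas are essentially \emph{local}, constraining only the neighborhood of a single low-degree vertex, whereas the Tutte-type obstruction to a 2-factor is intrinsically \emph{global} and can involve deficiency spread across $G$. Turning the bound $\min(G,K_{1,2})\le |V(G)|/5$ of Theorem \ref{first_thm} into $\min(G,K_{1,2})=0$ while simultaneously forbidding $K_{1,1}$-components requires a genuinely global augmentation argument that no currently known technique supplies; closing this gap is the main obstacle to any proof of the conjecture and is precisely where the plan above would have to do real work.
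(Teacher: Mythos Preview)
The statement you are asked to prove is Conjecture~\ref{2FC}, Vizing's 1968 conjecture that every edge-chromatic critical graph has a $2$-factor. The paper does \emph{not} prove this statement; it is explicitly presented as an open conjecture, and the results of Sections~\ref{Factors} and~\ref{Edge-colorings} are framed as approximations to it (see Figure~\ref{summary_1}). There is therefore no ``paper's own proof'' to compare against.

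Your proposal is not a proof either, and you say so yourself in the final paragraph: the exchange/augmentation step that would absorb the $K_{1,1}$- and $K_{1,2}$-components into cycles is precisely the missing global argument, and Vizing's adjacency lemma is too local to supply it. Everything preceding that paragraph is a heuristic outline rather than an argument. In particular, Theorem~\ref{main_new} only guarantees that a given edge lies in \emph{some} $\{K_{1,1},K_{1,2},C_m\}$-factor, which gives no control over the lexicographically minimal factor you fixed; and invoking Theorem~\ref{Lemma_condition_not_factor} to ``rule out obstructing sets'' does not address the Tutte-type $2$-factor obstruction, which concerns odd components of $G-S$ rather than isolated vertices. So the plan has a genuine gap exactly where you identify it, and the paper offers no way to close it.
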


\begin{conj} [\cite{V1965_2}] \label{INC}
If $G$ is a critical graph, then  $\alpha(G) \leq \frac{1}{2}|V(G)|$.
\end{conj}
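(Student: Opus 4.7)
The plan is to reduce Conjecture \ref{INC} to a factor-existence statement and then to attack that via the star-cycle factor machinery of this paper. By LP duality for the fractional matching and fractional vertex-cover polytopes, $\alpha_f(G) = |V(G)| - \mu_f(G)$, and since $\alpha(G) \le \alpha_f(G)$ always holds, the bound $\alpha(G) \le \frac{1}{2}|V(G)|$ would follow from $\mu_f(G) \ge \frac{1}{2}|V(G)|$, i.e., from the existence of a fractional perfect matching in $G$. Combining Theorem \ref{kez_condition} with Theorem \ref{first_thm} shows that, for a critical graph, the latter is equivalent to $\min(G, K_{1,2}) = 0$. The goal therefore becomes: prove that every critical graph has a $\{K_{1,1}, C_m : m \ge 3\}$-factor, i.e., a star-cycle factor containing no $K_{1,2}$-component.

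Next, Theorem \ref{Scheinerman} converts this into verifying $\iso(G - S) \le |S|$ for every $S \subseteq V(G)$. This is twice as strong as the inequality that Theorem \ref{theorem_Akiyama_Era} supplies for any graph with a $\{K_{1,1}, K_{1,2}, C_m\}$-factor, so the extra factor of two must be extracted from criticality of $G$. I would do so via Vizing's adjacency lemma: every vertex $v$ of degree $k$ in a $\Delta$-critical graph has at least $\Delta - k + 1$ neighbors of degree $\Delta$. Fixing $S$ and letting $I$ denote the set of isolated vertices of $G - S$, one has $N(v) \subseteq S$ for every $v \in I$, and a double count of $|E(I, S)|$ against $\sum_{s \in S} d_G(s) \le \Delta |S|$, combined with the adjacency lemma, should push toward $|I| \le |S|$. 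Theorem \ref{main_new} supplies an additional lever: for every edge $e$ one can insert it into some star-cycle factor, so the local modifications that would force a $K_{1,2}$-component near a prescribed edge can be tested against criticality directly.

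The main obstacle is that Conjecture \ref{INC} has been open since 1965, and the $\frac{3}{5}$ bound in Theorem \ref{first_thm} has been improved only by substantially deeper arguments, none of which has yet reached $\frac{1}{2}$. Eliminating $K_{1,2}$-components in every star-cycle factor is equivalent to forbidding odd deficient Gallai-Edmonds components in a critical graph, and Vizing's adjacency lemma alone seems insufficient for this step: critical graphs are known to satisfy it yet the conjecture remains open. A successful proof would most likely require a Zhang-type strengthening of the adjacency lemma combined with a re-coloring argument that exploits criticality at each vertex of the putative deficient component, using Theorem \ref{main_new} to guarantee that the re-coloring can be anchored at any prescribed edge. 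I regard this last step as the genuine hard part of the problem.
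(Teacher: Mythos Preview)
The statement you are asked to prove is a \emph{conjecture}, not a theorem: the paper does not prove it and explicitly says (Section~\ref{conjectures}) that Conjectures~\ref{2FC} and~\ref{INC} ``are open for a long time.'' So there is no proof in the paper to compare against, and your task was in effect to settle a fifty-year-old open problem.

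Your reduction is correct and matches the paper's own framing: $\alpha(G)\le\frac{1}{2}|V(G)|$ would follow from $\mu_f(G)=\frac{1}{2}|V(G)|$, which by Theorem~\ref{kez_condition} (or Theorem~\ref{Scheinerman}) is equivalent to $\iso(G-S)\le|S|$ for all $S$. This is precisely Conjecture~\ref{FPMC}, and the paper states explicitly that Conjecture~\ref{FPMC} implies Conjecture~\ref{INC}. So the logical skeleton of your plan is the same as the paper's.

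The gap is exactly where you locate it, and it is fatal: you need $\iso(G-S)\le|S|$, but the best the paper (and the literature) can extract from Vizing's adjacency lemma and its refinements is Theorem~\ref{LemmaEstimation}, namely $\iso(G-S)<\bigl(\frac{3}{2}-\frac{1}{\Delta}\bigr)|S|$, obtained via an elaborate discharging argument using Lemma~\ref{p_lemma}. Your proposed double-count of $|E(I,S)|$ together with the adjacency lemma does not close the factor-of-$\frac{3}{2}$ gap; if it did, the conjecture would have fallen long ago. Theorem~\ref{main_new} gives no additional leverage here: it guarantees a factor containing a prescribed edge, but such a factor may still have $K_{1,2}$-components, so it does not help rule them out globally. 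In short, your proposal is an honest outline that correctly isolates the hard step but does not advance on it; the paper does not either, and treats the statement as open.
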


Both conjectures are open for a long time and our results on star-cycle-factors can be seen as an approximation.
Figure \ref{summary_1} shows the connection between these conjectures, fractional matchings, component-factors and there applications on critical graphs. 
The paper closes with the study of fractional matchings on critical graphs.

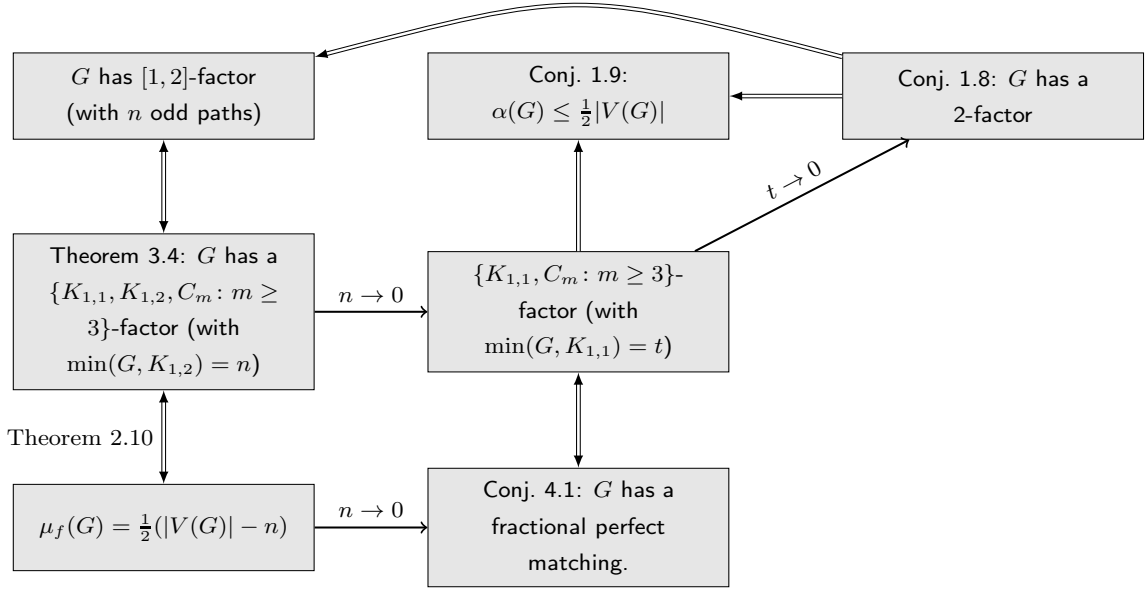
\begin{figure}[h]

\centering
\resizebox{15.5cm}{8cm}{
\begin{tikzpicture}
[node distance = 1cm, auto,font=\footnotesize,
every node/.style={node distance=3cm},
comment/.style={rectangle, inner sep= 5pt, text width=4cm, node distance=0.25cm, font=\scriptsize\sffamily},
force/.style={rectangle, draw, fill=black!10,minimum size=4cm, inner sep=5pt, text width=3.5cm, text badly centered, minimum height=1.2cm
, font=\footnotesize\sffamily}
] 

\node [force] (INC2) { $\{K_{1,1}, C_m \colon m\ge 3\}$-factor (with $\min(G, K_{1,1})= t$)};
\node [force, above of=INC2] (INC1) {Conj.~\ref{INC}: $\alpha(G) \leq \frac{1}{2}|V(G)|$};
\node [force, text width=3cm,left=1.5cm of INC1] (OTF1) {$G$ has $[1,2]$-factor (with $n$ 
	odd paths)};
\node [force, text width=3cm,right=1.5cm of INC1] (TF1) {Conj.~\ref{2FC}: $G$ has a 2-factor};
\node [force, left=1.5cm of INC2] (OTF2) {Theorem~\ref{first_thm}: $G$ has a $\{K_{1,1}, K_{1,2}, C_m\colon m\ge 3\}$-factor (with $\min(G, K_{1,2})= n$)};
\node [force, below of=INC2] (INC3) {Conj.~\ref{FPMC}: $G$ has a fractional perfect matching.};
\node [force, below of=OTF2] (OTF3) {$\mu_f(G)=\frac{1}{2}(\vert V(G)\vert -n)$};

\draw[double,double distance=1pt,<->,>=latex]  (OTF1) -- (OTF2) node[pos=.5,sloped,above] {};
\draw[double,double distance=1pt,<->,>=latex] (INC2) -- (INC3) node[pos=.5,sloped,above] {};

\draw[double,double distance=1pt,->,>=latex] (TF1) ..controls (0,4.5) ..(OTF1) node[pos=.5, left] {};
\draw[double,double distance=1pt,->,>=latex] (INC2) -- (INC1) node[pos=.5,sloped,above] {};
\draw[->, thick] (OTF2) -- (INC2) node[pos=.5,sloped,above] {$n\to 0$};
\draw[->, thick] (INC2) -- (TF1) node[pos=.5,sloped,above] {$t\to 0$};
\draw[->, thick] (OTF3) -- (INC3) node[pos=.5,sloped,above] {$n\to 0$};
\draw[double,double distance=1pt,->,>=latex] (TF1) -- (INC1) node[pos=.5,sloped,above] {};
\draw[double,double distance=1pt,<->,>=latex] (OTF2) -- (OTF3) node[pos=.5,left] {Theorem  \ref{kez_condition}};
;

\end{tikzpicture} }
\caption{Conjectures and Theorems for edge-chromatic critical graphs }\label{summary_1}
\end{figure}


\section{Fractional matching number and star-cycle factors} \label{Factors}

A graph $G$ is factor-critical if $G-v$ has a perfect matching for each $v \in V(G)$. Analogously, a matching is near perfect if it covers all vertices but one. 
Let $D(G)$ be the set of vertices of $G$ which are missed by at least one 
maximum matching of $G$, let $A(G) = N(D(G)) - D(G)$ and $C(G) = V(G) - (D(G) \cup A(G))$. We call the triple $(D(G),A(G),C(G))$ a Gallai-Edmonds decomposition of $G$. If there is no harm of confusion we shortly write $(D,A,C)$ instead of $(D(G),A(G),C(G))$.
We will use the fundamental Gallai-Edmonds structure theorem.

\begin{theo}[\cite{Edmonds1965, Gallai1963}] \label{GE-decomposition}
Let $G$ be a graph. If $(D,A,C)$ is a Gallai-Edmonds decomposition of $G$, then

\begin{enumerate}
\item every component of $G[D]$ is factor-critical,
\item $G[C]$ has a perfect matching,
\item every maximum matching consists of a near perfect matching on each component of $G[D]$, a perfect matching on $G[C]$, and a matching which matches every vertex of $A$ to one distinct component of $G[D]$, and
\item $\mu(G) = \frac{1}{2}(|V(G)| - c(G[D]) + |A|)$, where $c(G[D])$ is the number of components of $G[D]$.
\end{enumerate}
\end{theo}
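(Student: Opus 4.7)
The plan is to follow the classical approach based on Berge's theorem (a matching $M$ of $G$ is maximum if and only if $G$ has no $M$-augmenting path) together with a \emph{Stability Lemma} for the decomposition. The key preparatory step I would prove is: for every $v \in D(G)$,
\[
\mu(G-v) = \mu(G)-1 \quad \text{and} \quad \bigl(D(G-v),\, A(G-v),\, C(G-v)\bigr) = \bigl(D(G)\setminus\{v\},\, A(G),\, C(G)\bigr).
\]
The equality $\mu(G-v)=\mu(G)-1$ is immediate, since by definition of $D$ some maximum matching $M$ of $G$ misses $v$, and $M$ remains a matching in $G-v$ that must be maximum there. The coincidence of the three parts of the decomposition is more delicate: one has to show that a vertex $u \neq v$ is missed by some maximum matching of $G$ if and only if it is missed by some maximum matching of $G-v$, handled by concatenating alternating paths and taking symmetric differences of maximum matchings.

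With the Stability Lemma in hand, I would establish the four conclusions in order. For part~(1), each component $K$ of $G[D]$ is factor-critical by induction on $|V(K)|$: for any two vertices $u,v \in V(K)$, the fact that both lie in $D(G)$, together with repeated use of the lemma, forces $K-v$ to have a perfect matching obtained by restricting a suitable maximum matching of $G-v$. For part~(2), fix any maximum matching $M$ of $G$; every vertex of $C$ is $M$-covered (else it would be in $D$), and an alternating-path argument shows that no $M$-edge joins $C$ to $A \cup D$, so the restriction of $M$ to $G[C]$ is a perfect matching of $G[C]$. For part~(3), a parallel argument shows that each $v \in A$ is matched by $M$ into some component of $G[D]$, and that no two vertices of $A$ match into the same such component—otherwise one could re-route through the factor-critical structure of that component to construct an $M$-augmenting path, contradicting maximality. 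What remains of $M$ on each $D$-component is then forced to be near-perfect. Part~(4) is a direct count: each component $K$ of $G[D]$ contributes $(|V(K)|-1)/2$ matching edges, $G[C]$ contributes $|C|/2$ edges, and $A$ contributes $|A|$ edges (one per vertex of $A$), giving
\[
2\mu(G) = \bigl(|D|-c(G[D])\bigr) + |C| + 2|A| = |V(G)| - c(G[D]) + |A|.
\]

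The main obstacle is the Stability Lemma, and within it the claim that removing $v \in D$ does not shift any vertex between $D$, $A$, and $C$. This requires tracking how alternating and augmenting paths behave under the deletion of $v$, and the cleanest route uses the symmetric-difference trick: any two maximum matchings differ in a disjoint union of even alternating paths and even alternating cycles, which lets one exchange matchings along these components while preserving maximality and locating which vertices are missable. Once the Stability Lemma is established, the remaining arguments for parts~(1)--(4) are routine applications of alternating paths and bookkeeping.
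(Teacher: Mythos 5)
The paper does not prove this theorem; it is quoted from Edmonds and Gallai, so your proposal can only be judged on its own merits. Your overall strategy (a Stability Lemma plus alternating-path arguments plus counting) is indeed the classical route, but the cornerstone lemma is misstated in a way that breaks the rest of the argument. First, for $v\in D(G)$ one has $\mu(G-v)=\mu(G)$, \emph{not} $\mu(G)-1$: your own justification --- a maximum matching $M$ of $G$ missing $v$ survives intact in $G-v$ --- shows exactly that $\mu(G-v)\ge|M|=\mu(G)$. Second, the claimed identity $\bigl(D(G-v),A(G-v),C(G-v)\bigr)=\bigl(D(G)\setminus\{v\},A(G),C(G)\bigr)$ for $v\in D(G)$ is false: take $G=K_3$, where $D(G)=V(G)$ and $A(G)=C(G)=\emptyset$, yet for any vertex $v$ the graph $G-v$ is a single edge, so $D(G-v)=\emptyset\neq D(G)\setminus\{v\}$. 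The correct Stability Lemma deletes a vertex of $A(G)$, not of $D(G)$: for $u\in A(G)$ one has $D(G-u)=D(G)$, $A(G-u)=A(G)\setminus\{u\}$, $C(G-u)=C(G)$ and $\mu(G-u)=\mu(G)-1$. Since your induction for part (1) leans on ``repeated use of the lemma'' applied to vertices of $D$, it does not go through as written.

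To repair the argument you need two separate ingredients: (a) the Stability Lemma for vertices of $A$, which after deleting all of $A$ yields $D(G-A)=D(G)$ and $\mu(G-A)=\mu(G)-|A|$, so each component $K$ of $G[D]$ is a component of $G-A$ with $D(K)=V(K)$; and (b) Gallai's lemma, that a connected graph $H$ with $D(H)=V(H)$ is factor-critical, proved by an alternating-path/symmetric-difference argument. Parts (2)--(4) then follow from the deficiency count $\deficiency(G)=c(G[D])-|A|$ together with the observation that there are no edges between $C$ and $D$ (by the definition $A=N(D)\setminus D$) and a Hall-type argument on the bipartite graph between $A$ and the components of $G[D]$. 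Your sketches of (2)--(4) are essentially right once these foundations are in place, but as submitted the proof rests on a false lemma.
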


Next we formulate a sharpening of this result in the context of fractional matchings. Let $M$ be a maximum matching of a graph $G$
and $\nc(M)$ be the number of non-trivial components of $G[D]$ that are not matched by an edge 
$e \in M \cap E(D,A)$, and $\nc(G) = \max\{\nc(M) : M \text{ is a maximum matching of } G\}$. 

\begin{theo}\label{LiuLiu}Let $G$ be a graph and $n \geq 0$ be an integer. If $\mu_f(G) = \frac{1}{2}(|V(G)| - n)$, then
\begin{enumerate}
 \item $n = \deficiency(G) - \nc(G)$ \cite{Liu_Liu2002}, \label{LiuLiu_1}
 \item $n = \max\{\iso(G-S) - |S| : S \subseteq V(G)\}$ \cite[Theorem 2.2.6]{Scheinerman_Buch}.
\end{enumerate}
\end{theo}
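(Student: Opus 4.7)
My plan is to ground both identities in the Gallai-Edmonds decomposition $(D,A,C)$ from Theorem~\ref{GE-decomposition} together with the half-integrality in Theorem~\ref{max_frac_matching}. Let $t$ and $nt$ denote the numbers of trivial and non-trivial components of $G[D]$, and let $B$ be the auxiliary bipartite graph whose parts are $A$ and the trivial components of $G[D]$, with adjacency inherited from $G$; this is a natural object because every $D$-vertex has all its $G$-neighbors in $D\cup A$, so every trivial $D$-component has all its $G$-neighbors in $A$. The central claim will be the single identity $n=t-\mu(B)$; both parts of the theorem then follow by arithmetic using $\deficiency(G)=t+nt-|A|$ from Theorem~\ref{GE-decomposition}.

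To establish $n\le t-\mu(B)$, I would build an explicit half-integral fractional matching $f^*$: weight $1$ on a perfect matching of $G[C]$, weight $1$ on a maximum matching $M_1$ of $B$, weight $1$ on a matching of $A\setminus V(M_1)$ into distinct non-trivial $D$-components, weight $1$ on a near-perfect matching inside each non-trivial $D$-component omitting its $A$-matched vertex when applicable, and weight $\tfrac12$ on an odd cycle inside each non-trivial $D$-component left unmatched by $A$. A direct count shows the uncovered vertices of $f^*$ are exactly the $t-\mu(B)$ trivial $D$-components missed by $M_1$, so $2\mu_f(G)\ge|V(G)|-(t-\mu(B))$. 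For the reverse inequality, I would use the standard duality: for any fractional matching $f$ and any $S\subseteq V(G)$, every $v\in\Iso(G-S)$ has its $G$-neighbors inside $S$, hence $\sum_{v\in\Iso(G-S)}\sum_{e\in E_G(v)}f(e)\le\sum_{v\in S}\sum_{e\in E_G(v)}f(e)\le|S|$, and combining with the pointwise bound of $1$ on each remaining vertex yields $2\mu_f(G)\le|S|+|V(G)|-\iso(G-S)$, i.e., $n\ge\iso(G-S)-|S|$. Taking $S=N_B(X)$ for a set $X$ of trivial $D$-components maximizing $|X|-|N_B(X)|$---equal to $t-\mu(B)$ by König-Egerváry---and observing $X\subseteq\Iso(G-N_B(X))$ gives $n\ge t-\mu(B)$. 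Together these show $n=t-\mu(B)=\max_S(\iso(G-S)-|S|)$, which is Part~(2).

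For Part~(1) I would use an augmenting-path exchange in $B$: applied to any maximum matching $M$ of $G$, it swaps one $A$-to-non-trivial edge for two $A$-to-trivial edges while the released non-trivial component reverts to its internal near-perfect matching, preserving $|M|$. Iterating produces a maximum matching of $G$ using exactly $\mu(B)$ edges from $A$ to trivial $D$-components, so $\nc(G)=nt-(|A|-\mu(B))$, and $\deficiency(G)-\nc(G)=(t+nt-|A|)-(nt-|A|+\mu(B))=t-\mu(B)=n$. The main technical obstacle is the construction of $f^*$: one must verify that $A\setminus V(M_1)$ can be matched into distinct non-trivial $D$-components---which comes from inspecting the symmetric difference of $M_1$ with the Gallai-Edmonds $A$-saturating matching in the larger bipartite graph of $A$ versus all $D$-components, and using maximality of $M_1$ to show every alternating path from $A\setminus V(M_1)$ terminates at a non-trivial component---and that every non-trivial factor-critical component admits a $\{0,\tfrac12,1\}$-valued fractional perfect matching, which follows from its ear decomposition (weight $\tfrac12$ on the initial odd cycle, weight $1$ on perfect matchings of the subsequent even ears).
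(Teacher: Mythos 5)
First, note that the paper does not prove Theorem \ref{LiuLiu} at all: both parts are quoted from the literature (Liu--Liu for (1), Scheinerman--Ullman Theorem 2.2.6 for (2)), so your Gallai--Edmonds reconstruction is a from-scratch argument rather than a variant of anything in the text. Your overall architecture is sound: reducing both parts to the single identity $n=t-\mu(B)$, getting the upper bound on $n$ from an explicit half-integral fractional matching, the lower bound from weak LP duality applied to $S=N_B(X)$ for a K\H{o}nig deficiency set $X$, and Part (1) from $\deficiency(G)=t+nt-|A|$ together with an exchange argument showing $\nc(G)=nt-(|A|-\mu(B))$. The duality step, the K\H{o}nig step, and the Part (1) exchange all check out.

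There is, however, one genuine error in the construction of $f^*$: the claim that for a maximum matching $M_1$ of $B$, the set $A\setminus V(M_1)$ can be matched by single edges into distinct non-trivial $D$-components. This is false for an arbitrary choice of $M_1$. Take $A=\{a_1,a_2,a_3,a_4\}$, trivial components $d_1,d_2$, non-trivial components $T,T_3,T_4$ (triangles), with $a_1,a_2$ adjacent to $d_1,d_2$ and to $T$ only, $a_3$ adjacent to $d_1$ and $T_3$, and $a_4$ adjacent to $d_2$ and $T_4$; one checks this is realized by an actual graph with exactly this Gallai--Edmonds decomposition. Then $M_1=\{a_3d_1,a_4d_2\}$ is a maximum matching of $B$, but $A\setminus V(M_1)=\{a_1,a_2\}$ sees only the single non-trivial component $T$, so Hall's condition fails. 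Your own repair tool is the right one, but the conclusion you draw from it is too strong: the alternating paths in the symmetric difference of $M_1$ with the Gallai--Edmonds $A$-saturating matching $M_0$ do each terminate at a distinct non-trivial component, but they may pass through several trivial components first, so the endpoint of $A\setminus V(M_1)$ cannot in general be matched \emph{directly} to a non-trivial component. What you must do instead is reroute along each such path, replacing its $M_1$-edges by its $M_0$-edges; this preserves the set of matched trivial components (hence still exactly $\mu(B)$ of them) while sending the terminal $A$-vertex of each path to its non-trivial component. Equivalently: among all $A$-saturating matchings of the auxiliary bipartite graph $H$ (of $A$ versus all $D$-components), one maximizing the number of edges to trivial components attains exactly $\mu(B)$ such edges, by an augmenting-path exchange. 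With that correction the construction of $f^*$, and hence the whole proof, goes through.
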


%
We call a set $S$ with $\iso(G-S) = |S| + n$ a witness for $\mu_f(G)$.
A crucial point in the proof of Theorem \ref{LiuLiu}(\ref{LiuLiu_1}) is that every non-trivial component of $G[D]$ has a fractional perfect matching. The following theorem shows that they have even more structural properties.

\begin{theo}[\cite{Cornuejols_Pulleyblank_1983}]\label{Pulleyblank}
Let $G$ be a factor-critical graph with $\vert V(G)\vert >1$. Then $G$ has a fractional perfect  matching $f$ with $f(e)\in \{0,\frac{1}{2}, 1\}$ for every $e\in E(G)$ and the set $\{e \colon e\in E(G) \text{ and } f(e)=\frac{1}{2}\}$ forms exactly one odd circuit.
\end{theo}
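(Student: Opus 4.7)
The plan is to deduce the theorem from Lovász's characterization of factor-critical graphs via odd ear decompositions: a graph $G$ with $|V(G)|>1$ is factor-critical if and only if $G$ admits a sequence of subgraphs $G_1 \subset G_2 \subset \dots \subset G_k = G$ such that $G_1$ is an odd circuit and each $G_{i+1}$ arises from $G_i$ by attaching an \emph{odd ear}, i.e.\ a path of odd length whose two endpoints lie in $G_i$ (possibly coinciding) and whose interior vertices are new. I would state this as a black-box ingredient.

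Given such a decomposition, I would define the required function $f\colon E(G)\to\{0,\tfrac12,1\}$ inductively. On $G_1$, assign $f(e)=\tfrac12$ to every edge of the initial odd circuit; since every vertex of a cycle has degree $2$, this yields $\sum_{e\in E_{G_1}(v)}f(e)=1$ for every $v\in V(G_1)$. For the inductive step, suppose $f$ is already defined on $E(G_i)$ so that each vertex of $G_i$ has total $f$-weight $1$. Let $P=v_0v_1\cdots v_{2t+1}$ be the new odd ear added to form $G_{i+1}$, with $v_0,v_{2t+1}\in V(G_i)$ and $v_1,\dots,v_{2t}$ new vertices. On the edges of $P$, set
\[
f(v_{2j}v_{2j+1})=0 \quad (0\le j\le t), \qquad f(v_{2j-1}v_{2j})=1 \quad (1\le j\le t),
\]
extending $f$ by $0$ elsewhere. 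Each new interior vertex $v_{2j-1}$ ($1\le j\le t$) is incident to exactly one weight-$1$ edge $v_{2j-1}v_{2j}$ (its other ear-edge has weight $0$), and symmetrically for $v_{2j}$; the endpoints $v_0,v_{2t+1}$ only receive weight $0$ from the new ear, so their total $f$-weight remains $1$. A closed ear (with $v_0=v_{2t+1}$) is handled identically, the weight $0$ at both its end-edges causing no conflict.

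After processing all ears, $f$ is a fractional perfect matching of $G$ taking values in $\{0,\tfrac12,1\}$. By construction the half-weighted edges are exactly the edges of the initial circuit $G_1$, which is one odd circuit; no later ear contributes any half-weight edge. This verifies the conclusion.

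The main obstacle is, as usual, the existence of the odd ear decomposition itself, but this is a classical theorem of Lovász that I will simply quote. A minor point that needs care is the case of a closed ear: I must check that assigning weight $0$ to both edges at the unique repeated endpoint does not force a contradiction, which is exactly why the scheme begins and ends the ear with a weight-$0$ edge. Once that bookkeeping is done, the construction is entirely straightforward.
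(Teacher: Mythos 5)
Your proof is correct. Note first that the paper itself offers no proof of this statement: it is quoted as a known theorem of Cornu\'ejols and Pulleyblank, so there is nothing in the paper to compare against line by line. Your route through Lov\'asz's odd-ear-decomposition characterization of factor-critical graphs is a standard and fully valid way to derive it. The inductive weight assignment checks out: the initial odd circuit receives weight $\tfrac12$ on every edge, giving each of its vertices total weight $1$; each subsequent odd ear $v_0v_1\cdots v_{2t+1}$ gets the alternating pattern $0,1,0,1,\dots,1,0$ beginning and ending with $0$, which is consistent precisely because the ear has odd length, so every interior vertex is covered by exactly one weight-$1$ edge while the attachment vertices receive only weight $0$ and keep their existing total of $1$; the degenerate cases $t=0$ (a single chord, weighted $0$) and $v_0=v_{2t+1}$ (a closed ear) cause no conflict, as you observe. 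The resulting $f$ is a fractional perfect matching with values in $\{0,\tfrac12,1\}$ whose half-weight edges are exactly the edges of the initial odd circuit, which is the assertion of the theorem. The only external ingredient is Lov\'asz's theorem that a factor-critical graph on more than one vertex is built from an odd circuit by odd ear additions, which is legitimate to cite as a black box.
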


Furthermore, every maximum matching of a graph $G$ is contained 
in the support of a fractional matching with values in $\{0, \frac{1}{2}, 1\}$.
Let $M$ be a maximum matching with $\nc(M) = \nc(G)$. A maximum fractional matching $f$ with $M \subseteq \supp(f)$
is called a canonical maximum fractional matching of $G$ (with respect to $M$). 
  
Theorem \ref{LiuLiu} shows that every graph has a canonical maximum fractional matching. 
A look into the proof details of Theorem \ref{LiuLiu}(\ref{LiuLiu_1}) yields that it is also shown that $A(G)$ contains a witness for $\mu_f(G)$.
We will state this fact in a more detailed manner in the following corollary. 

\begin{coro}
Let $G$ be a graph, $n \geq 0$ be an integer, $M$ be a maximum matching of $G$ and $\mu_f(G) = \frac{1}{2}(|V(G)| - n)$. If $f$ is a canonical maximum fractional matching
w.r.t. $M$, then $\Iso(G[D])$ contains two disjoint subsets $D^+$ and $D^-$ with

\begin{enumerate} 
\item $D^- = \{v \in \Iso(G[D]): v \text{ is not matched by } M\}$ and $|D^-| = n$,
\item $D^+ = \{w \in \Iso(G[D]):\text{there is an $M$-alternating path from $w$ to some vertex of } D^-\}$,
\item $M$ induces a perfect matching on $D^+ \cup N(D^+ \cup D^-)$; in particular, $|N(D^+ \cup D^-)| = |D^+|$, and
\item $N(D^+ \cup D^-)$ is a witness for $\mu_f(G)$.
\end{enumerate} 
\end{coro}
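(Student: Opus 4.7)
The plan is to verify the four claims in order, using the Gallai--Edmonds structure (Theorem~\ref{GE-decomposition}), the deficit identity (Theorem~\ref{LiuLiu}), and the canonical choice $\nc(M)=\nc(G)$; the argument refines what is already implicit in the proof of Theorem~\ref{LiuLiu}(\ref{LiuLiu_1}).

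For part~(1), Theorem~\ref{GE-decomposition}(3) implies that every $M$-unmatched vertex is the uncovered vertex of some unmatched component of $G[D]$; those coming from isolated components constitute $D^-$, while the remaining ones lie in the $\nc(M)=\nc(G)$ non-trivial unmatched components. Hence $|D^-|=\deficiency(G)-\nc(G)=n$ by Theorem~\ref{LiuLiu}(1). Part~(2) is the definition of $D^+$, with $D^+\cap D^-=\emptyset$ because the vertices of $D^+$ are $M$-matched.

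For part~(3), first $N(D^+\cup D^-)\subseteq A$, since isolated vertices of $G[D]$ have no $D$-neighbors. Every $w\in D^+$ is $M$-matched to some $a\in N(w)\subseteq N(D^+\cup D^-)$; the crux is the converse. Given $a\in N(D^+\cup D^-)$ with $M$-partner $z\in D$, pick $x\in D^+\cup D^-$ adjacent to $a$. A shift argument using $\nc(M)=\nc(G)$ forces $z\in\Iso(G[D])$: if $x\in D^-$ and $z$ lay in a non-trivial component $K$, then $M' := M-\{az\}+\{ax\}$ is a maximum matching in which $K$ becomes unmatched and $x$ becomes matched, so $\nc(M')=\nc(M)+1>\nc(G)$, contradicting the canonical choice of $M$; if $x\in D^+$, one first shifts $M$ along an even-length $M$-alternating path from some $u\in D^-$ to $x$ ending with $x$'s $M$-edge to obtain a maximum matching $M''$ with $\nc(M'')=\nc(G)$ in which $x$ is unmatched, and then applies the previous case to $M''$. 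Once $z\in\Iso(G[D])$, the combined shift simultaneously provides an $M$-alternating walk from $z$ through $a$ and $x$ to some vertex of $D^-$; shortcutting at the first repeated vertex yields a simple alternating path, placing $z\in D^+$. Therefore $M$ restricts to a perfect matching between $D^+$ and $N(D^+\cup D^-)$, so $|N(D^+\cup D^-)|=|D^+|$.

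For part~(4), set $T=N(D^+\cup D^-)$. Every vertex of $D^+\cup D^-$ is isolated in $G-T$ by construction. No other vertex becomes isolated: non-trivial $G[D]$-components retain their internal edges; each $v\in C$ retains an $M$-partner in $C$; each $a'\in A\setminus T$ retains its $M$-partner in $D$; and any isolated vertex $v\in\Iso(G[D])\setminus(D^+\cup D^-)$ has its $M$-partner $b\notin T$, since $b\in T$ would force $v\in D^+$ by the closure from~(3). Hence $\iso(G-T)=|D^+|+|D^-|$, and with $|T|=|D^+|$ this gives $\iso(G-T)-|T|=|D^-|=n$, so $T$ witnesses $\mu_f(G)$ via Theorem~\ref{LiuLiu}(2). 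The main obstacle is the parity step of~(3): exhibiting an even-length alternating path from $D^-$ to $x\in D^+$ ending with $x$'s $M$-edge (so that it concatenates cleanly with the edges $x,a,z$), and verifying that the resulting alternating walk can be shortened to a simple alternating path; both are standard in matching theory and are most cleanly organized by doing a BFS of alternating paths rooted at $D^-$.
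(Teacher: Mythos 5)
The paper never actually proves this corollary: it only remarks that the claims can be read off from the proof details of Theorem \ref{LiuLiu}(\ref{LiuLiu_1}) in \cite{Liu_Liu2002}. So you are supplying an argument the paper omits, and your engine is the right one: the swap $M'=M-\{az\}+\{ax\}$ exchanges a matched non-trivial component of $G[D]$ for a matched trivial one and would force $\nc(M')>\nc(G)$, which is exactly where the canonical choice of $M$ enters. Part (1), the forward inclusion in (3), and all of (4) are complete as written (part (4) correctly reduces to (3) via the observation that an $a\in N(D^+\cup D^-)$ matched to some $v\in \Iso(G[D])\setminus(D^+\cup D^-)$ is impossible).

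The step to tighten is the $x\in D^+$ case of (3). Two of its assertions are not generic facts: that the shifted matching $M''$ still satisfies $\nc(M'')=\nc(G)$, and that the walk $z,a,x,P$ can be shortcut at the first repeated vertex to a simple alternating path --- shortcutting alternating walks fails in general (this is the blossom phenomenon), and the shift may also destroy the edge $az$ if $a$ or $z$ lies on $P$. All of this is rescued by the device you name only in passing at the end: run the alternating BFS from $D^-$ and prove by induction on the level that every odd-level vertex lies in $A$ and every even-level vertex lies in $\Iso(G[D])$, the induction step being the $\nc$-swap applied along the whole even prefix of the path. That confinement to $\Iso(G[D])\cup A$ gives for free the even-length path ending in $x$'s matching edge, fixes the parity of each vertex so that no shortcutting issue arises, and shows directly that the $M$-partner of any $a\in N(D^+\cup D^-)$ sits at the next even level, hence in $D^+$. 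I would carry out that induction explicitly rather than citing it as standard; with it in place the proof is correct.
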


If $F$ is a star-cycle factor of $G$, then let $t_i^F$ denote the number of $K_{1,i}$-components of $F$ and let
$l(G) = \min\{\sum_{i=1}^\infty (i-1)t_i^F : F \text{ is a star-cycle factor of } G\}$. 
The next theorem gives a detailed insight into the structure of graphs with respect to their fractional
matching number.  

\begin{theo}\label{star_cycle_factor_neu}
Let $G$ be a connected graph, $n \geq 0$ be an integer and $\lambda$ be the minimum integer such that 
$\iso(G-S) \leq \lambda|S|$ for all $S \subseteq V(G)$. 
If $\mu_f(G) = \frac{1}{2}(|V(G)|-n)$,
then $ \lambda \leq \lceil\frac{n}{\delta(G)}\rceil + 1$ and $G$ has a $\{K_{1,1}, \dots, K_{1,\lambda}, C_m : m \geq 3 \}$-factor $F$, such that $l(G) = \sum_{i=1}^\lambda (i-1)t_i^F = n$. 
Furthermore, the $K_{1,j}$-components are induced subgraphs of $G$, and for $j \geq 2$,
their center vertices are in $N(D^+ \cup D^-) (\subseteq A)$ 
and their leaves are in $D^+ \cup D^-$.
\end{theo}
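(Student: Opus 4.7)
The plan is to bound $\lambda$ and establish $l(G)\ge n$ by direct arguments, and then to construct a factor with $l(G)=n$ by combining the Gallai--Edmonds decomposition with Theorem~\ref{Pulleyblank} and Theorem~\ref{star_cycle_factor}.

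For $\lambda$: by Theorem~\ref{LiuLiu}(2), $\iso(G-S)\le |S|+n$ for all $S\subseteq V(G)$. Whenever $\iso(G-S)>0$, any vertex isolated in $G-S$ has all of its (at least $\delta(G)$) neighbours in $S$, so $|S|\ge \delta(G)$; hence $\iso(G-S)\le |S|+n\le |S|(\lceil n/\delta(G)\rceil+1)$, and for $|S|<\delta(G)$ the inequality is trivial. For $l(G)\ge n$: given any star-cycle factor $F$ with $c$ cycle-vertices, assigning weight $1$ to one edge of each star component and weight $1/2$ to every cycle edge defines a fractional matching of weight $\sum_i t_i^F+c/2\le \mu_f(G)=\frac{1}{2}(|V(G)|-n)$; combined with $|V(G)|=c+\sum_i(i+1)t_i^F$ this yields $\sum_i(i-1)t_i^F\ge n$ for every star-cycle factor, so $l(G)\ge n$.

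For the construction, take a maximum matching $M$ with $\nc(M)=\nc(G)$ and the sets $D^+, D^-, N=N(D^+\cup D^-)$ from the corollary to Theorem~\ref{Pulleyblank}. Build $F$ componentwise. Include the perfect matching of $G[C]$ from $M$. For each non-trivial component $D_i$ of $G[D]$ matched to $A$ by some edge $a_iu_i\in M$, include the near-perfect matching of $D_i$ from $M$ together with $a_iu_i$. For each non-trivial component $D_i$ not matched to $A$, apply Theorem~\ref{Pulleyblank} to $D_i$ to obtain a fractional perfect matching whose $1$-edges form a matching and whose $1/2$-edges form the unique odd circuit, and include both. For each $v\in \Iso(G[D])\setminus(D^+\cup D^-)$, include the $M$-edge from $v$ to its partner, which lies in $A\setminus N$. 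These components are all $K_{1,1}$'s and cycles, contribute $0$ to $\sum_i(i-1)t_i^F$, and cover $V(G)\setminus(N\cup D^+\cup D^-)$.

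To cover the remaining $N\cup D^+\cup D^-$ by stars with centres in $N$ and leaves in $D^+\cup D^-$, apply Theorem~\ref{star_cycle_factor} to the bipartite subgraph $H$ of $G$ consisting of the $G$-edges between $N$ and $D^+\cup D^-$, with $f(v)=\lambda$ for $v\in N$, $f(u)=1$ for $u\in D^+\cup D^-$, and $W=D^+\cup D^-$. The bipartiteness of $H$ and the independence of $W$ in $H$ rule out cycle components, and $f(u)=1$ on $D^+\cup D^-$ forces any $K_{1,j}$ with $j\ge 2$ to have centre in $N$ and leaves in $D^+\cup D^-$. The hypothesis $\iso(H-S)\le \lambda|S\cap N|+|S\cap(D^+\cup D^-)|$ is verified by splitting $\iso(H-S)=|A_1|+|A_2|$, where $A_1\subseteq N\setminus S$ and $A_2\subseteq (D^+\cup D^-)\setminus S$ are the sets of vertices isolated in $H-S$. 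Each $u\in A_2$ satisfies $N_G(u)\subseteq S\cap N$, hence $|A_2|\le \iso(G-(S\cap N))\le \lambda|S\cap N|$. For $A_1$, the matching $M$ restricted to $N\cup D^+$ gives an injection $v\mapsto w_v$ of $N$ into $D^+$ sending each $v$ to a $G$-neighbour; if $v\in A_1$ then $w_v\in S\cap(D^+\cup D^-)$, yielding $|A_1|\le |S\cap(D^+\cup D^-)|$. The bipartite step contributes $(|D^+|+n)-|N|=n$ to $\sum_i(i-1)t_i^F$ (using $|N|=|D^+|$), so $\sum_i(i-1)t_i^F=n=l(G)$ by the lower bound. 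The main technical hurdle is the bound $|A_1|\le |S\cap(D^+\cup D^-)|$, which relies on the matching-structure from the corollary; each $K_{1,j}$ with $j\ge 2$ is an induced subgraph of $G$ since its leaves lie in the independent set $D^+\cup D^-$.
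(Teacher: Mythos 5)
Your proposal is correct, but it takes a genuinely different route from the paper. The paper proves the existence of the factor by induction on the vertices $d_1,\dots,d_n$ of $D^-$: it starts from a $\{K_{1,1},C_m\colon m\ge 3\}$-factor of $G-D^-$ (which has a fractional perfect matching) and attaches each $d_{k+1}$ either by growing a star at a neighbour of current degree less than $\lambda$, or, if all neighbours are saturated, by switching along an $F_k$-alternating path into $A\cup D$ to free up room; the bound $\lambda\le\lceil n/\delta(G)\rceil+1$ is then extracted from counting how often the first case can increase the maximum star size. You instead build the factor in one shot from the Gallai--Edmonds pieces (perfect matching on $C$, near-perfect matchings plus $M$-edges or Theorem \ref{Pulleyblank} on the components of $G[D]$) and handle the deficient part by applying Theorem \ref{star_cycle_factor} to the bipartite graph between $N(D^+\cup D^-)$ and $D^+\cup D^-$ with $f\equiv\lambda$ on one side and $f\equiv 1$ on the other; the hypothesis of that theorem is verified by splitting the isolated vertices between the two sides and using, respectively, the defining property of $\lambda$ and the perfect $M$-matching between $N(D^+\cup D^-)$ and $D^+$. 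Your derivation of $\lambda\le\lceil n/\delta(G)\rceil+1$ directly from $\iso(G-S)\le|S|+n$ together with the observation that $|S|\ge\delta(G)$ whenever $\iso(G-S)>0$ is cleaner than the paper's, and your lower bound $l(G)\ge n$ via the induced fractional matching matches the paper's closing argument. What the paper's induction buys is a self-contained, algorithmic construction that certifies $l(G_k)=k$ at every stage; what yours buys is brevity (modulo the cited theorems) and a completely transparent placement of the large stars. Two small points you should make explicit, both immediate from item 3 of the corollary: no vertex of $N(D^+\cup D^-)$ is $M$-matched to a non-trivial component of $G[D]$ or to a trivial component outside $D^+$, so the two halves of your construction are vertex-disjoint; and the auxiliary bipartite graph has no isolated vertices, so Theorem \ref{star_cycle_factor} indeed returns a spanning star factor.
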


\begin{proof}
Let $f$ be a canonical maximum fractional matching w.r.t.~$M$. 
For $n=0$ we have $D^-=\emptyset$ and for $n\geq 1$ let $D^- = \{d_1, \dots,d_n\}$.
Let $V_0 = V(G) - D^-$, and for $k \in \{1,\dots,n\}$ let $V_k = V_0 \cup \{d_1, \dots,d_k\}$. Further let $G_k = G[V_k]$, for $k \in \{0,\dots,n\}$. Clearly,
$G_k$ is a subgraph of $G$ and $f$ is a canonical maximum fractional matching of $G_k$ w.r.t.~$M$ with $\mu_f(G_k) = \frac{1}{2}(|V(G_k)|-k)$.

We construct a sequence of subgraphs $F_0, \dots, F_n$ of $G$, 
where the subgraph $F_k$ is the desired $\{K_{1,1}, \dots, K_{1,t_k}, C_m : m \geq 3 \}$-factor on $G_k$, with $t_k \leq	 \lambda$, $l(G_k)=k$ and $G_n=G$. 

If $k=0$, then $G[V_0]$ has a perfect fractional matching, $\iso(G_0-S)\le \vert S\vert$ for all $S\subseteq V(G_0)$ by Theorem \ref{Scheinerman} and the statement follows with Theorem \ref{Tutte_53}, that is, $t^{F_0}_i = 0$ for each $i \geq 2$
and therefore, $l(G_0) = 0$ and $t_0 = 1 \le \lambda$. 

Suppose that $F_k$ has been constructed in $G_k$ for $k$, with $k\leq n-1$. We will construct $F_{k+1}$ in $G_{k+1}$. 

\underline{Case A:} There is a vertex $a \in N_G(d_{k+1})$ with $a \not \in N(\{d_1, \dots,d_k\})$ or $d_{F_k}(a) < \lambda$. 
Then $F_k \cup \{d_{k+1}a\}$ is a $\{K_{1,1}, \dots, K_{1,t_{k+1}}, C_m : m \geq 3\}$-factor of $G_{k+1}$.
The factor $F_{k+1}$ is obtained from $F_k$ by extending a $K_{1,j}$-component, with $j<\lambda$, to a $K_{1,j+1}$-component.
Hence, $t_j^{F_k} - 1 = t_j^{F_{k+1}}$ and $t_{j+1}^{F_k} + 1 = t_{j+1}^{F_{k+1}}$. 
Furthermore, $t_{k+1} \leq t_k + 1 \leq \lambda$.
Thus,
$l(G_{k+1})=\sum_{i=1}^{\lambda} (i-1)t_i^{F_{k+1}} = \sum_{i=1}^{\lambda} (i-1)t_i^{F_{k}} - (j-1)+j = k+1$.

\underline{Case B:} For all $a \in N_G(d_{k+1})$: $d_{F_k}(a) = \lambda$. 
Let $P$ be the set of all vertices of $A(G)\cup D(G)$ for which there is an $F_k$-alternating path with initial vertex $d_{k+1}$,
$T_D = P \cap D(G)$ and $T_A = P \cap A(G)$. Note that $T_D\subseteq \Iso(G[D])$, since $f$ is a canonical maximum fractional matching w.r.t. $M$ and $M$ is a maximum matching with $\nc(M)=\nc(G)$.

If $d_{F_k}(a) = \lambda$ for all $a \in T_A$, then, by the definition of $T_A$ and $T_D$, it follows that
$T_D$ is a set of isolated vertices in $G-T_A$. But $|T_D| = \lambda |T_A| + 1$, a contradiction to 
the choice of $\lambda$.

Hence, there is a  $a' \in T_A$ with $d_{F_k}(a') < \lambda$. Let $p = d_{k+1},a^1, d^1, \dots, a^t, d^t, a'$ be a minimal
$F_k$-alternating path ($d^i \in D(G)$ and $a^i \in A(G)$) with end vertices $d_{k+1}$ and $a'$. Note that
$d_{F_k}(a^i) = \lambda$, $d_{F_k}(d^i)=1$, $a^id^i \in E(F_k)$ and $d_{k+1}a^1, d^ia^{i+1}, d^ta' \not \in E(F_k)$. 
Let $F_{k+1}$ be obtained from $F_k$ by interchanging the edges of $F_k$ and $E(p) - E(F_k)$ in $p$. Hence,
$F_{k+1}$ is a $\{K_{1,1}, \dots, K_{1,\lambda}, C_m : m \geq 3\}$-factor of $G_{k+1}$. As in Case A it follows
that $\sum_{i=1}^{\lambda} (i-1)t_i^{F_{k+1}} = k+1$ and $t_{k+1} \leq \lambda$.

Let $F=F_n$. Then $F$ is a $\{K_{1,1}, \dots, K_{1,\lambda}, C_m : m \geq 3\}$-factor of $G$ and 
$\sum_{i=1}^{\lambda} (i-1)t_i^{F} = n$. We cannot do better since 
$f':E(G) \rightarrow [0,1]$ with $f'(e) = \frac{1}{i}$ if $e$ is an edge of a $K_{1,i}$-component of $F$, $f'(e) = \frac{1}{2}$,
if $e$ is an edge of a circuit of $F$, and $f'(e) = 0$ otherwise, is a fractional matching of $G$ and 
$\sum_{e \in E(G)} f'(e) = \frac{1}{2}(|V(G)| - n)$. 

It remains to show that $\lambda \leq \lceil \frac{n}{\delta(G)} \rceil+1$. 
Without loss of generality we may assume that $d_G(d_1) \leq \dots \leq d_G(d_n)$.
Let $F=F_n$ be the $\{K_{1,1}, \dots, K_{1,t}, C_m : m \geq 3 \}$-factor
as constructed above and $t = t_n$.
Clearly, $\lambda \leq t$. For $k \leq n-1$, $F_{k+1}$ is obtained from $F_k$ either by applying
the construction of Case A or the construction of Case B. In Case A, vertex $a$ can be chosen
such that $d_{F_k}(a) = \min\{d_{F_k}(x) : x \in N_G(d_{k+1})\}$. Thus,
$t_{k+1} = t_k$ if $d_{F_k}(a) < t_k$ and $t_{k+1}=t_k+1 \leq \lambda$ otherwise. 
In case B, we have $t_k = t_{k+1} (=\lambda)$. Since Case B only applies if Case A does not,
it follows that $t \leq \lceil \frac{n}{\delta(G)} \rceil +1$.
Therefore,
$\iso(G-S) \leq  (\lceil \frac{n}{\delta(G)} \rceil +1)|S|$ for all $S \subseteq V(G)$. Since $\lambda$ is minimum, the statement follows. 
\end{proof}

\begin{coro} \label{sum_up} For each graph $G$, 
$l(G) = \deficiency(G) - \nc(G) = \max\{\iso(G-S) - |S| : S \subseteq V(G)\} = |V(G)|-2\mu_f(G)$ and 
$G$ has a $\{K_{1,1},\dots, K_{1,\lambda}, C_m : m\ge 3\}$-factor with $\nc(G)$ circuits.
\end{coro}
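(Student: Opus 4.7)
The three-way equality is immediate from combining Theorem \ref{star_cycle_factor_neu} with Theorem \ref{LiuLiu}. Setting $n=|V(G)|-2\mu_f(G)$, the construction in Theorem \ref{star_cycle_factor_neu} delivers a $\{K_{1,1},\dots,K_{1,\lambda},C_m:m\ge 3\}$-factor $F$ with $\sum_{i=1}^{\lambda}(i-1)t_i^F=n$, so $l(G)\le n$. Conversely, any star-cycle factor $F'$ of $G$ induces a fractional matching by assigning $\tfrac{1}{i}$ to each edge of a $K_{1,i}$-component and $\tfrac{1}{2}$ to each edge of a cycle component, with total value $\tfrac{1}{2}(|V(G)|-\sum_{i}(i-1)t_i^{F'})$; this forces $\sum_{i}(i-1)t_i^{F'}\ge n$. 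Hence $l(G)=n$, and the remaining two equalities are exactly Theorem \ref{LiuLiu}(1),(2).

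For the circuit count, my plan is to refine the choice of the starting factor $F_0$ used in the proof of Theorem \ref{star_cycle_factor_neu}. Fix a maximum matching $M$ with $\nc(M)=\nc(G)$ and use the associated Gallai--Edmonds decomposition. By definition of $\nc$, exactly $\nc(G)$ non-trivial components $H$ of $G[D]$ are not matched by $M\cap E_G(D,A)$; each such $H$ is factor-critical with $|V(H)|>1$, so Theorem \ref{Pulleyblank} supplies a $\{K_{1,1},C_m:m\ge 3\}$-factor of $H$ consisting of exactly one odd circuit together with a perfect matching on the remaining vertices. For every other non-trivial component of $G[D]$, and for $G[C]$, I would use the matching edges induced by $M$; the trivial components of $G[D]\setminus D^-$ (all lying in $D^+$) are covered via their $M$-edges to $A$. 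This produces a $\{K_{1,1},C_m:m\ge 3\}$-factor $F_0$ of $G_0$ whose cycles all lie inside $D$ and number exactly $\nc(G)$.

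Running the inductive extension from $F_0$ to $F=F_n$ exactly as in Theorem \ref{star_cycle_factor_neu} should then preserve this cycle count. In Case A one merely appends $d_{k+1}$ as a new leaf at an existing star-center, creating no cycle. In Case B the edges swapped along the alternating path touch only star-centers $a^i\in A$ (of $F_k$-degree $\lambda$) and star-leaves $d^i\in D$ (of $F_k$-degree $1$); since the cycles of $F_0$ lie inside $D$ and their vertices have $F_k$-degree $2$, no such path can meet any cycle, and the swap merely redistributes star-leaves. Consequently $F=F_n$ is a $\{K_{1,1},\dots,K_{1,\lambda},C_m:m\ge 3\}$-factor of $G$ with exactly $\nc(G)$ circuits, as required. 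The main obstacle is precisely this Case B verification; it reduces to the degree bookkeeping already present in the proof of Theorem \ref{star_cycle_factor_neu}, together with the observation that every cycle of the constructed factor is contained in $D(G)$ and hence disjoint from the $A$-vertices on the alternating path.
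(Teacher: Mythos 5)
Your proposal is correct and follows the same route as the paper: the three-way equality is obtained by combining Theorem \ref{LiuLiu} with Theorem \ref{star_cycle_factor_neu}, and the circuit count comes from seeding the construction of Theorem \ref{star_cycle_factor_neu} with Theorem \ref{Pulleyblank} applied to the $\nc(G)$ unmatched non-trivial components of $G[D]$. The paper states this in one line; your write-up merely fills in the bookkeeping (in particular the Case~A/Case~B check that the extension steps never touch a circuit), which is consistent with the intended argument.
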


\begin{proof}
 The first statement follows directly from Theorem \ref{LiuLiu} and Theorem \ref{star_cycle_factor_neu}. The second statement follows from Theorem \ref{Pulleyblank} and Theorem \ref{star_cycle_factor_neu}.
\end{proof}

\begin{coro}
Let $G$ be a graph. Then 
 \begin{align*}
\alpha(G)\le \frac{1}{2}\left(\vert V(G)\vert +l(G)-\nc(G)\right).
 \end{align*}
\end{coro}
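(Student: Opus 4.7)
The plan is to exhibit a specific star-cycle factor $F$ of $G$ and then sum the independence numbers of its components.

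First I would apply Corollary~\ref{sum_up} to obtain a $\{K_{1,1},\dots,K_{1,\lambda},C_m\colon m\ge 3\}$-factor $F$ of $G$ with $\sum_{j\ge 1}(j-1)t_j^F=l(G)$ and exactly $\nc(G)$ circuits. Tracking the construction, these circuits are produced by Theorem~\ref{Pulleyblank} from the non-trivial factor-critical components of $G[D]$, so I may assume that each of the $\nc(G)$ circuits of $F$ has odd length. Let $q_1,\dots,q_{\nc(G)}$ denote these lengths.

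Since any independent set of $G$ restricts to an independent set in every component of $F$, the elementary bounds $\alpha(K_{1,j})=j$ for $j\ge 1$ and $\alpha(C_q)=(q-1)/2$ for odd $q$ give
\begin{align*}
\alpha(G) \;\le\; \sum_{j\ge 1} j\,t_j^F \;+\; \sum_{i=1}^{\nc(G)}\frac{q_i-1}{2}.
\end{align*}
I would then match this to the target using the two identities $|V(G)|=\sum_{j\ge 1}(j+1)t_j^F+\sum_i q_i$ and $l(G)=\sum_{j\ge 1}(j-1)t_j^F$. Adding them yields $|V(G)|+l(G)=\sum_j 2j\,t_j^F+\sum_i q_i$, so
\begin{align*}
\tfrac{1}{2}\bigl(|V(G)|+l(G)-\nc(G)\bigr) \;=\; \sum_{j\ge 1} j\,t_j^F + \sum_{i=1}^{\nc(G)}\frac{q_i-1}{2},
\end{align*}
which is exactly the upper bound on $\alpha(G)$ established above.

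The only non-routine input is the parity observation: without oddness of each $q_i$ the correct cycle bound would be $\lfloor q_i/2\rfloor$, and the $-\tfrac{1}{2}\nc(G)$ term would evaporate if any circuit of $F$ were even. That parity is precisely what Theorem~\ref{Pulleyblank} guarantees for the circuits built into the factor of Corollary~\ref{sum_up}, so once this observation is in place the rest of the argument is pure bookkeeping.
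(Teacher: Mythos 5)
Your proof is correct and follows essentially the same route as the paper: both take the factor supplied by Corollary~\ref{sum_up} with $\nc(G)$ odd circuits (oddness coming from Theorem~\ref{Pulleyblank}) and bound $\alpha(G)$ by summing the independence numbers of the star and odd-circuit components. Your version just makes the bookkeeping with $t_j^F$ and the cycle lengths explicit, which the paper compresses into a single line.
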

\begin{proof}
By Corollary \ref{sum_up} $G$ has a  $\{K_{1,1},\dots, K_{1,\lambda}, C_m : m\ge 3\}$-factor  with $\nc(G)$ odd circuits and $l(G)$  vertices extend $K_{1,1}$-components to $K_{1,j}$-components, $1<j\le \lambda$. Therefore, $\alpha(G)\le \frac{1}{2} (\vert V(G)\vert-l(G)) -\frac{1}{2}\nc(G)+l(G)$.
\end{proof}

\begin{theo} \label{fm_edge}
Let $G$ be a graph and $e' \in E(G)$. If there is a maximum fractional matching $f$ of $G$ with $f(e') \not=0$, then there is a
maximum fractional matching $f'$ with $f'(e) \in \{0, \frac{1}{2}, 1\}$ for all $e \in E(G)$ and $f'(e') \not= 0$,
and the components of $\supp(f')$ are $K_{1,1}$'s or odd circuits.  
\end{theo}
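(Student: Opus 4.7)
The plan is to realize the desired $f'$ as a vertex of an appropriate face of the fractional matching polytope $\mathcal{P}(G)= \{g:E(G)\to[0,1] \mid \sum_{e\in E(v)} g(e)\le 1 \text{ for all } v\in V(G)\}$. Let $P\subseteq \mathcal{P}(G)$ be the set of fractional matchings with $\sum_e g(e)=\mu_f(G)$. Then $P$ is a non-empty face of $\mathcal{P}(G)$ containing the given $f$ with $f(e')>0$. Since vertices of any face of a polytope are vertices of the polytope itself, it suffices to exhibit a vertex of $P$ on which $e'$ has positive value. Consider the linear functional $g\mapsto g(e')$ on $P$; it attains a positive maximum (at least $f(e')>0$), so the set $P'=\{g\in P : g(e')=\max_{h\in P} h(e')\}$ is a non-empty face of $P$. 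Any vertex $f'$ of $P'$ is then a vertex of $\mathcal{P}(G)$ with $f'(e')>0$.

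It remains to verify the structural properties, all of which come from extremality of $f'$ in $\mathcal{P}(G)$. Half-integrality is the content of Theorem~\ref{max_frac_matching}, but can also be argued directly: if $f'$ took a value in $(0,\tfrac12)\cup(\tfrac12,1)$ on some edge, one could build an alternating walk of edges whose values avoid $\{0,\tfrac12,1\}$ and produce perturbations $f'\pm\epsilon\delta\in\mathcal{P}(G)$ averaging to $f'$, contradicting extremality. Given half-integrality, every $1$-edge saturates the constraints at both its endpoints, so its endpoints carry no further support; thus each $1$-edge is a $K_{1,1}$-component of $\supp(f')$. The $\tfrac12$-valued edges induce a subgraph of maximum degree at most $2$, i.e.\ a disjoint union of paths and cycles.

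The main obstacle is to rule out $\tfrac12$-paths and $\tfrac12$-valued even cycles among these components. For a $\tfrac12$-path $v_0v_1\cdots v_k$ one adds $+\epsilon$ and $-\epsilon$ alternately to consecutive edges; the endpoint constraints at $v_0,v_k$ still satisfy $\tfrac12\pm\epsilon\le 1$ for small $\epsilon$, and the interior constraints are preserved exactly, so $f'\pm \epsilon\delta\in\mathcal{P}(G)$, contradicting extremality. The same alternating perturbation around an even cycle remains feasible. Consequently only odd circuits survive, and since $f'(e')>0$, the edge $e'$ lies either in a $K_{1,1}$-component of $\supp(f')$ (when $f'(e')=1$) or in an odd circuit component (when $f'(e')=\tfrac12$), as required.
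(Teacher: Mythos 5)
Your proof is correct, but it takes a genuinely different route from the paper. You work polyhedrally: you pass to the face $P$ of maximum fractional matchings, then to the subface $P'$ maximizing $g(e')$, and take a vertex $f'$ of $P'$, which is automatically a vertex of the whole fractional matching polytope; the structure of $\supp(f')$ then follows from the classical characterization of extreme points of that polytope (half-integral values, with the $\tfrac12$-edges forming vertex-disjoint odd circuits and the $1$-edges forming isolated $K_{1,1}$'s). The paper instead fixes a maximum fractional matching $f_0$ with $f_0(e')\neq 0$ that maximizes the number of zero edges, and argues by induction on the deficiency $n=|V(G)|-2\mu_f(G)$: the base case $n=0$ is handled by the even-circuit and ``two odd circuits joined by a path'' perturbations, and the inductive step attaches an auxiliary apex vertex joined to all deficient vertices to reduce $n$ by one. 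Your argument buys a shorter, induction-free proof and makes transparent why one can simultaneously insist on maximality, on $f'(e')>0$, and on the rigid support structure (all three are consequences of extremality in nested faces); the paper's argument is more self-contained and elementary. The one soft spot in your write-up is the half-integrality step: Theorem \ref{max_frac_matching} as stated only asserts the \emph{existence} of one half-integral optimum, not that every vertex of $P'$ is half-integral, so you are really invoking Balinski's theorem on the fractional matching polytope (or must flesh out your ``alternating walk'' sketch into a full rank/perturbation argument); your path and even-cycle perturbations are correct and are indeed the main ingredients of that classical proof, so this is a citation issue rather than a gap in the mathematics.
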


\begin{proof} Let $f$ be a maximum fractional matching and $e'\in E(G)$ with $f(e')\neq 0$.  By Theorem \ref{max_frac_matching} we have that $\sum_{e \in E(G)}f(e) = \mu_f(G) = \frac{1}{2}(|V(G)|-n)$ for
an integer $n \geq 0$. Let $f_0$ be a maximum fractional matching 
with $f_0(e') \not= 0$ and $|\{e: e \in E(G) \text{ and } f_0(e)=0 \}|$ maximal, and let $H=G[\supp(f_0)]$.
We will prove the statement by induction on $n$.

\underline{$n=0$:}
In this case, $f$ and $f_0$ are fractional perfect matchings of $G$, and our proof of the statements closely 
follows the line of the proof of Theorem \ref{max_frac_matching} given in \cite{Scheinerman_Buch}. 

If $H$ contains an edge $e_0=vw$ with $d_H(v)=1$, then $f_0(e_0)=1$ and
$e_0$ is the edge of a $K_{1,1}$-component of $H$. Hence, $f_0(e)=0$ for all $e \in (E(v) \cup E(w))\setminus \{e_0\}$.
In particular, $e' \not \in (E(v) \cup E(w)) \setminus \{e_0\}$. 

\begin{claim} \label{even_circuit}
$H$ does not contain an even circuit.
\end{claim} 
Suppose to the contrary that it contains an even circuit $C$. Let $E(C) = \{e_1, \dots,e_{2k}\}$ and if $e' \in E(C)$, then let $e'=e_1$.
Let $m = \min\{f_0(e_{2i}) : 1 \leq i \leq k\}$. Define $g : E(G) \rightarrow \{-1,0,1\}$, with $g(e)=0$ if $e \in E(G) - E(C)$ and
for $i,j \in \{1,\dots,k\}$ let $g(e_{2i-1})=1$ and $g(e_{2j})=-1$ . Then $f_1=f_0+mg$ is a maximum fractional matching
with $f_1(e') \not=0$ and which assigns 0 to at least one more edge than $f_0$, a contradiction.

\begin{claim} \label{odd_circuit}
If $H$ contains an odd circuit $C_1$, then $C_1$ is a circuit component of $H$. 
\end{claim} 
Suppose that $C_1$ contains a vertex $v$ with $d_H(v) > 2$. Let $P$ be a path which starts in $v$ with an edge which is not an edge of $C_1$.   
This path cannot return to $C_1$, since then $H$ would contain an even circuit. It can also not have an end vertex $x$ of degree 1, since then
$f_0(e)=1$ for the edge which is incident to $x$ in $H$. Hence,
it ends at a vertex $w$ with $N(w) \subseteq V(P)$. Thus, $H$ contains a graph $B$ which consists of 
two odd circuits $C_1$ and $C_2$ which are connected by a path (possibly of length 0). Let $g : E(H) \rightarrow \{-1, -\frac{1}{2}, 0, \frac{1}{2},1\}$ be a function with
$g(e) = 0$ if $e \not \in E(B)$ and $\pm1$ alternately on the path which connects the two odd circuits of $B$ and $\pm \frac{1}{2}$ alternately
around the circuits such that $\sum_{e \in E(v)}g(e)=0$ for each $v \in V(B)$. If $e' \in E(B)$, then choose $g$ such that $g(e') > 0$. 
Let $m$ be the smallest number such that there is an edge $e \in E(B)$ with $f_1(e)=(f_0+mg)(e)=0$. Then $f_1$ is fractional perfect matching 
of $G$ which assigns the value 0 to more edges that $f_0$. Furthermore, the value 0 can only achieved on an edge $e$
with $g(e) < 0$. Hence, $f_1(e') \not=0$ and we obtain a contradiction to the definition of $f_0$. Thus, the claim is proved.

Hence, the components of $H$ are odd circuits or $K_{1,1}$'s. The function $f' : E(G) \rightarrow \{0, \frac{1}{2}, 1\}$ with
$f'(e)=\frac{1}{2}$, if $e$ is an edge of a circuit component of $H$, $f'(e)=1$, if $e$ is an edge of a $K_{1,1}$ component of $H$
and $f'(e)=0$, if $e \not \in E(H)$ is the desired fractional perfect matching of $G$ with $f'(e') \not= 0$. 

\underline{$n \geq 1$:} For $v \in V(G)$ let $\delta_f(v) = 1 - \sum_{e \in E(v)}f(e)$. 
Let $\{v_1, \dots,v_t\}$ be the set of vertices $v$ of $G$ with $\delta_f(v) > 0$. Add a vertex $x$ 
and edges $xv_i$ for $i \in \{1, \dots,t\}$ to $G$ to obtain a new graph $G_x$. Note that $|V(G_x)|=|V(G)|+1$. 

Extend $f$ to a function $h : E(G_x) \rightarrow [0,1]$ with $h(e)=f(e)$ if $e \in E(G)$ and for the edges $xv_1, \dots, xv_t$,
choose $h(xv_i)$ appropriately such that $0 \leq h(xv_i) \leq \delta_f(v_i)$ and $\sum_{i=1}^t h(xv_i) = 1$. 
The function $h$ is a fractional matching on $G_x$. 
It holds that $\sum_{e \in E(G_x)} h(e) = 1 + \sum_{e \in E(G)} f(e) = 1 + \frac{1}{2}(|V(G)|-n) = \frac{1}{2}(|V(G_x)| - (n-1))$.

\begin{claim}
$h$ is a maximum fractional matching of $G_x$.
\end{claim}
If $n=1$, then $h$ is a fractional perfect matching of $G_x$ and therefore, it is maximum. 

For $n \geq 2$ we suppose to the contrary that the graph $G_x$ has a fractional matching $h_0$ with  
$\sum_{e \in E(G_x)}h_0(e) = \frac{1}{2}(|V(G_x)| - m)$ and $m < n-1$. It follows that
$\sum_{e \in E(G)}h_0(e) \geq (\sum_{e \in E(G_x)}h_0(e)) - 1 = \frac{1}{2}(|V(G_x)| - m) - 1
> \frac{1}{2}(|V(G)| - n) = \mu_f(G)$, a contradiction and the claim is proved.

By definition, $h(e') = f(e') \not=0$ and therefore,
$h$ is a maximum fractional matching on $G_x$ with $h(e') \not= 0$ and  $\sum_{e \in E(G_x)} h(e) = \frac{1}{2}(|V(G_x)| - (n-1))$.

By induction hypothesis, there is a maximum fractional matching $h'$ of $G_x$ with 
$h'(e) \in \{0, \frac{1}{2}, 1\}$ for all $e \in E(G)$ and $h'(e') \not= 0$.
Since $\sum_{e \in E(G_x)}h'(e) = 1 + \sum_{e \in E(G)}f(e)$ it follows that $\sum_{e \in E(x)}h'(e) = \sum_{i=1}^t h'(xv_i) = 1$.
Suppose to the contrary that $x$ is a vertex of a circuit component $C$ of $G_x[\supp(h')]$. Since $C$ is an odd circuit, $C-x$ has a perfect matching. 
Thus, $\mu_f(G) > \sum_{e \in E(G)} f(e)$, a contradiction. Hence, $x$ is a vertex of a $K_{1,1}$-component of $G_x[\supp(h')]$, and
$f' : E(G) \rightarrow \{0,\frac{1}{2},1\}$ with $f'(e) = h'(e)$ for all $e \in E(G)$ is the desired maximum fractional matching of $G$.  
\end{proof} 
 \setcounter{case}{0}
\setcounter{claim}{0}

A star-cycle factor $F$ is minimal if $\sum_{i=1}^\infty (i-1)t_i^F = l(G)$.

\begin{coro} \label{e_in_max_cs-factor}
Let $G$ be a graph and $e' \in E(G)$. There is a maximum fractional matching $f$ of $G$ with $f(e') \not= 0$ if and only if
$e'$ is an edge of a minimal star-cycle factor of $G$. 
\end{coro}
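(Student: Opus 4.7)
I will handle the two implications separately; the forward direction is a short calculation, while the converse requires extending a maximum fractional matching to a minimal star-cycle factor without discarding the distinguished edge.

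For the ``if'' direction, suppose $F$ is a minimal star-cycle factor of $G$ with $e' \in E(F)$. I would define $f \colon E(G) \to [0,1]$ by $f(e) = \tfrac{1}{i}$ if $e$ lies in a $K_{1,i}$-component of $F$, $f(e) = \tfrac{1}{2}$ if $e$ lies in a cycle-component of $F$, and $f(e)=0$ otherwise. A straightforward vertex check confirms $f$ is a fractional matching, and
\[
\sum_{e\in E(G)}f(e) \;=\; \tfrac{1}{2}\bigl(|V(G)|-\textstyle\sum_i(i-1)t_i^F\bigr)\;=\;\tfrac{1}{2}(|V(G)|-l(G))\;=\;\mu_f(G),
\]
by the minimality of $F$ and Corollary~\ref{sum_up}. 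Hence $f$ is maximum, and $f(e')>0$ since $e'\in E(F)$.

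For the ``only if'' direction, suppose $f$ is a maximum fractional matching with $f(e')\ne 0$. Applying Theorem~\ref{fm_edge}, I replace $f$ by $f'$ taking values in $\{0,\tfrac12,1\}$, still with $f'(e')\ne 0$, and whose support $H:=G[\supp(f')]$ is a disjoint union of $K_{1,1}$-components and odd circuits. From $\sum f'=\mu_f(G)=\tfrac12(|V(G)|-n)$ with $n=l(G)$ (Corollary~\ref{sum_up}), the uncovered set $U:=V(G)\setminus V(H)$ satisfies $|U|=n$, and $H$ is itself a $\{K_{1,1},C_m\colon m\ge 3\}$-factor of $G-U$. I would then rerun the inductive construction in the proof of Theorem~\ref{star_cycle_factor_neu}, taking $H$ as the starting factor $F_0$ and extending by the vertices of $U$ one at a time, to obtain a minimal star-cycle factor $F$ of $G$. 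The Case~A step attaches the next vertex to an existing $K_{1,j}$-component, adding a single edge without deleting any, so $e'$ survives automatically. The Case~B alternating-path swap removes edges of the form $a^id^i$ with $a^i\in A(G)$ and $d^i\in \Iso(G[D])$; by Theorem~\ref{Pulleyblank}, every cycle-component of $H$ lies inside a non-trivial factor-critical component of $G[D]$, so cycle-edges of $H$ are never of this form. In particular, if $e'$ is a cycle-edge of $H$, it is preserved for free.

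The main obstacle is the remaining subcase, in which $e'$ is a $K_{1,1}$-edge of $H$ with one endpoint in $A(G)$ and the other in $\Iso(G[D])$ and some Case~B alternating path threatens to traverse $e'$. I plan to resolve this by exploiting the freedom built into the construction: permuting the order of the vertices $u_1,\dots,u_n$ and, at each application of Case~B, picking the alternating-path endpoint $a'$ so as to avoid $e'$. The fact that $e'\in\supp(f')$ already certifies the existence of an alternating configuration around $e'$ in $G$, which the rerouting argument can leverage to bypass $e'$ without leaving a vertex of $U$ uncovered. Carrying out this local rerouting carefully is the technical heart of the converse direction.
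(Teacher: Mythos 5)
Your ``if'' direction is correct and is exactly what the paper does (the paper dismisses it as trivial). The problem is the converse: you explicitly leave the decisive subcase unresolved. After reducing to a half-integral maximum fractional matching $f'$ whose support consists of $K_{1,1}$'s and odd circuits, you propose to rerun the inductive construction of Theorem~\ref{star_cycle_factor_neu} and then admit that a Case~B alternating-path swap might traverse and destroy $e'$, offering only a plan (``permuting the order of the vertices \dots picking the endpoint $a'$ so as to avoid $e'$ \dots the technical heart of the converse direction'') rather than an argument. As written there is no guarantee that such a rerouting always exists, so the proof is incomplete at precisely the point where it matters.

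The gap can be closed, and much more cheaply, by exploiting the maximality of $f'$ directly instead of invoking the alternating-path machinery; this is what the paper does. Let $v_1,\dots,v_n$ be the $n=|V(G)|-2\mu_f(G)$ vertices not covered by $\supp(f')$. First, no neighbor $x$ of any $v_i$ can lie on a circuit component $C$ of $G[\supp(f')]$: since $C$ is odd, $C-x$ has a perfect matching, and replacing the half-values on $C$ by that matching together with $f'(v_ix)=1$ would increase the total value. Second, at most one endpoint of any $K_{1,1}$-component can be adjacent to uncovered vertices, since otherwise one again augments $f'$ (via a path $v_i x y v_j$ or a triangle). Hence every uncovered vertex can be joined to a neighbor that is the unused endpoint of some $K_{1,1}$-component, turning $K_{1,1}$'s into larger stars; this adds exactly $n$ edges, deletes nothing, and therefore yields a star-cycle factor $F$ with $\sum_i(i-1)t_i^F=n=l(G)$ (minimal by Corollary~\ref{sum_up}) that still contains $e'$. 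No Case~B step and no rerouting are ever needed. I recommend you replace your inductive re-run with this direct completion.
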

\begin{proof}
($\Rightarrow$) Let $\mu_f(G) = \frac{1}{2}(|V(G)| - n)$ for an integer $n \geq 0$. By Theorem \ref{fm_edge} there is a  maximum fractional matching 
$f'$ with $f'(e) \in \{0, \frac{1}{2}, 1\}$ for all $e \in E(G)$ and $f'(e') \not = 0$. Hence, $e'$ is an edge of
a circuit or a $K_{1,1}$-component of $G[\supp(f')]$. Furthermore, there are precisely $n$ vertices $v_1, \dots,v_n$ with
$\sum_{e \in E(v_i)}f'(e)=0$. Let $x \in N(v_i)$. Then $\sum_{e \in E(x)}f'(e)=1$. If $x$
is a vertex of a circuit component $C$ of $G[\supp(f')]$, then, since $C$ is of odd order, we easily deduce a 
contradiction to the maximality of $f'$. Hence, $x \in N(v_i)$ is a vertex of a $K_{1,1}$-component of $G[\supp(f')]$.
Furthermore, at most one end vertex of a $K_{1,1}$-component can be in $\bigcup_{i=1}^n N(v_i)$, since for otherwise we again can deduce a contradiction to the maximality of $f'$. 
Extending $G[\supp(f')]$ by connecting each $v_i$ to one of its neighbors yields the 
desired $\{K_{1,1}, \dots, K_{1,t}, C_m : m \geq 3\}$-factor of $G$. 
The other direction of the statement is trivial.  
\end{proof}

If $\iso(G-S)\le \lambda \vert S\vert$, with $\lambda$ minimal, then the star-cycle factor $F$ 
in Corollary \ref{e_in_max_cs-factor} is not necessarily 
a $\{K_{1,1},\dots K_{1,t}, C_m\colon m\ge 3\}$-factor with $t\le \lambda$.
Recall that $\min(G, K_{1,2}) = \min\{t_2^F : F \text{ is a } \{K_{1,1}, K_{1,2}, C_m : m \ge 3\}\text{-factor of } G \}$.
The following theorem will be used in Section \ref{Edge-colorings}.

\begin{theo}\label{kez_condition}
If a graph $G$ has a $\{K_{1,1}, K_{1,2}, C_m : m \ge 3\}$-factor,
then $\mu_f(G) = \frac{1}{2}(|V(G)|-\min(G, K_{1,2}))$. 
\end{theo}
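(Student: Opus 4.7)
The plan is to prove the equality by establishing matching inequalities, using the star-cycle factor machinery already developed (especially Theorem \ref{star_cycle_factor_neu} and Corollary \ref{sum_up}) together with a direct fractional matching construction from any given $\{K_{1,1}, K_{1,2}, C_m\}$-factor.

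For the inequality $\mu_f(G) \ge \frac{1}{2}(|V(G)| - \min(G, K_{1,2}))$, I would take an arbitrary $\{K_{1,1}, K_{1,2}, C_m : m\ge 3\}$-factor $F$ with $t_1^F$ copies of $K_{1,1}$, $t_2^F$ copies of $K_{1,2}$, and some cycles. Build $f : E(G) \to [0,1]$ by setting $f(e) = 1$ on each $K_{1,1}$-edge, $f(e) = \tfrac{1}{2}$ on each $K_{1,2}$-edge and each edge of an odd cycle-component, and taking $f \in \{0,1\}$ to be the characteristic function of a perfect matching on each even cycle-component; set $f(e) = 0$ off $E(F)$. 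A short verification shows $f$ is a fractional matching. Adding up the contributions edge by edge and using $|V(G)| = 2t_1^F + 3t_2^F + \sum m_i$ (where the $m_i$ are the cycle lengths) gives
\[
\sum_{e \in E(G)} f(e) = t_1^F + t_2^F + \tfrac{1}{2}\sum m_i = \tfrac{1}{2}(|V(G)| - t_2^F).
\]
Taking $F$ to minimise $t_2^F$ yields $\mu_f(G) \ge \tfrac{1}{2}(|V(G)| - \min(G,K_{1,2}))$.

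For the reverse inequality, I would use that the existence hypothesis combined with Theorem \ref{theorem_Akiyama_Era} forces $\iso(G-S) \le 2|S|$ for all $S$, so the parameter $\lambda$ of Theorem \ref{star_cycle_factor_neu} is at most $2$. Writing $\mu_f(G) = \tfrac{1}{2}(|V(G)| - n)$, Theorem \ref{star_cycle_factor_neu} then produces a $\{K_{1,1}, K_{1,2}, C_m\}$-factor $F^*$ with $t_2^{F^*} = \sum_{i \ge 1}(i-1)t_i^{F^*} = n$. By Corollary \ref{sum_up}, $n = |V(G)| - 2\mu_f(G)$, so $\min(G, K_{1,2}) \le t_2^{F^*} = |V(G)| - 2\mu_f(G)$, which rearranges to $\mu_f(G) \le \tfrac{1}{2}(|V(G)| - \min(G, K_{1,2}))$.

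Combining the two inequalities yields the claimed equality. There is really no conceptual obstacle here because Theorem \ref{star_cycle_factor_neu} has already done the heavy lifting: it guarantees a factor whose $K_{1,2}$-count realises $l(G) = |V(G)| - 2\mu_f(G)$ whenever $\lambda \le 2$. The only care needed is the straightforward bookkeeping in the fractional matching construction and confirming that $f$ respects the vertex constraint at the center of each $K_{1,2}$ (where the sum is exactly $1$) and at each vertex of an odd cycle (where the sum is exactly $1$).
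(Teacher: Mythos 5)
Your proposal is correct and follows essentially the same route as the paper: the paper's proof simply cites Theorem \ref{star_cycle_factor_neu} and Corollary \ref{sum_up}, and your two inequalities are exactly the content of those results unpacked (the lower bound is the ``we cannot do better'' fractional-matching construction from the proof of Theorem \ref{star_cycle_factor_neu}, and the upper bound is its application with $\lambda\le 2$ via Theorem \ref{theorem_Akiyama_Era}). No substantive difference.
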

\begin{proof}
The result follows directly from Theorem \ref{star_cycle_factor_neu} and Corollary \ref{sum_up}.
\end{proof}

Theorem \ref{Tutte_53} is the special case $m=n$ of the following corollary. 
\begin{coro}\label{bounds} 
 Let $G$ be a graph and let $n,m$ be integers with $0 < n \leq m \leq 2n$. If
$\iso(G-S)\le \frac{m}{n}\vert S\vert$ for all subsets $S\subseteq V(G)$,
 then  
 \begin{enumerate}
 \item[$(i)$] $\min(G, K_{1,2})\le \frac{m-n}{m+n} \vert V(G)\vert$,
 \item[$(ii)$] $\alpha(G)\le \frac{m}{m+n} \vert V(G)\vert$.
 \end{enumerate}
\end{coro}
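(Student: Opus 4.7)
The plan is to use the machinery developed earlier in the section, with Theorem \ref{kez_condition} and Theorem \ref{LiuLiu}(2) doing the work for (i) and a direct counting argument on the star-cycle factor doing the work for (ii).

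First, I would note that the hypothesis $m\le 2n$ gives $\iso(G-S)\le 2|S|$ for all $S\subseteq V(G)$, so Theorem \ref{theorem_Akiyama_Era} guarantees that $G$ has a $\{K_{1,1},K_{1,2},C_m:m\ge 3\}$-factor. Theorem \ref{kez_condition} then gives $\mu_f(G)=\tfrac12(|V(G)|-\min(G,K_{1,2}))$, and Theorem \ref{LiuLiu}(2) produces a witness $S^*\subseteq V(G)$ with $\iso(G-S^*)-|S^*|=\min(G,K_{1,2})$. Writing $k=\min(G,K_{1,2})$, the bound $\iso(G-S^*)\le\tfrac{m}{n}|S^*|$ yields $k\le\tfrac{m-n}{n}|S^*|$, and the obvious inequality $|V(G)|\ge\iso(G-S^*)+|S^*|=k+2|S^*|$ gives $|S^*|\le\tfrac12(|V(G)|-k)$. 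Substituting and solving the resulting linear inequality for $k$ yields $k\le\tfrac{m-n}{m+n}|V(G)|$, which is statement (i).

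For (ii), I would pick a $\{K_{1,1},K_{1,2},C_m:m\ge 3\}$-factor $F$ with exactly $t_2=\min(G,K_{1,2})$ star components of size three and let $t_1$ denote the number of $K_{1,1}$-components. Since any independent set of $G$ restricted to a component of $F$ is independent in that component, we have
\[
\alpha(G)\le t_1+2t_2+\sum_{C}\bigl\lfloor |C|/2\bigr\rfloor,
\]
while $|V(G)|=2t_1+3t_2+\sum_C|C|$. Doubling the first inequality and comparing with $|V(G)|$ gives $2\alpha(G)\le |V(G)|+t_2$, so $\alpha(G)\le\tfrac12(|V(G)|+t_2)$. Plugging in the bound on $t_2$ from (i) and simplifying produces exactly $\alpha(G)\le\tfrac{m}{m+n}|V(G)|$.

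Nothing here should present a serious obstacle: both parts are short once one recognizes that Theorem \ref{kez_condition} converts the fractional language of Theorem \ref{LiuLiu} into a statement about $\min(G,K_{1,2})$, and that a factor with few $K_{1,2}$-components directly controls $\alpha(G)$ through the trivial bound $\alpha\le\sum\alpha(F_i)$ on components. The only place to be careful is in bookkeeping the $\lfloor|C|/2\rfloor$ terms when passing from the independence bound to $2\alpha(G)\le|V(G)|+t_2$, but the inequality $2\lfloor|C|/2\rfloor\le|C|$ makes this immediate.
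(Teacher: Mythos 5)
Your proposal is correct and follows essentially the same route as the paper: part (i) is the paper's algebraic manipulation of $\iso(G-S)\le\frac{m}{n}|S|$ combined with $\iso(G-S)+|S|\le|V(G)|$ and the identity $\min(G,K_{1,2})=\max_S(\iso(G-S)-|S|)$ from Theorem \ref{kez_condition} and Corollary \ref{sum_up}, merely organized around an explicit witness $S^*$ rather than a uniform rearrangement over all $S$. For (ii) the paper phrases the same component-wise count as a bound on $\iso(F-S)$ instead of on $\alpha(F)$, but the arithmetic $2\alpha(G)\le|V(G)|+t_2$ is identical.
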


\begin{proof}

$(i)$ Since $1 \le \frac{m}{n} \le 2$ it follows with Theorem \ref{theorem_Akiyama_Era} that  
$G$ has a $\{K_{1,1}, K_{1,2}, C_m\colon m\ge 3\}$-factor.
Furthermore, for all $S \subseteq V(G)$:
 \begin{align*}
 \iso(G-S)&\le  \frac{m}{n}\vert S\vert=  \frac{2m}{2n}\vert S\vert\\
  \Leftrightarrow \frac{2n}{m+n} \iso(G-S) &\le \frac{2m}{m+n} \vert S\vert\\
  \Leftrightarrow   \iso(G-S) -\frac{m-n}{m+n} \iso(G-S)&\le \vert S\vert+\frac{m-n}{m+n} \vert S\vert\\
  \Leftrightarrow  \iso(G-S) &\le \vert S\vert +\frac{m-n}{m+n}\left(\iso(G-S)+ \vert S\vert\right).
 \end{align*}
Since $\iso(G-S)+ \vert S\vert\le \vert V(G)\vert$ for  all $S \subseteq V(G)$ it follows that
\begin{align*}
 \iso(G-S)\le \vert S\vert +\frac{m-n}{m+n}\vert V(G)\vert.
\end{align*}
Now, the result follows with Theorem \ref{kez_condition} and Corollary \ref{sum_up}.

$(ii)$ By $(i)$, $G$ has as a  $\{K_{1,1}, K_{1,2}, C_m\colon m\ge 3\}$-factor $F$ with 
$\min(G, K_{1,2})\le \frac{m-n}{m+n} |V(G)|$. Then, for all $S \subseteq V(G)$ we have
\begin{align*}
 \iso(G-S)&\le \iso(F-S) \le 2 \frac{ m-n}{m+n} \vert V(G)\vert + \frac{1}{2} \left(\vert V(G)\vert-3\frac{ m-n}{m+n} \vert V(G)\vert\right) \\
 &\le \frac{m-n}{2(m+n)} \vert V(G)\vert + \frac{1}{2}\vert V(G)\vert =\frac{m}{m+n}\vert V(G)\vert
\end{align*}
\end{proof}

In the following we will apply Lov\'{a}sz' $(g,f)$-factor Theorem, which is on multigraphs. 

\begin{theo}[\cite{Lovasz1970}] \label{gf_factor_theorem}
Let $G$ be a multigraph and let $g,f:V(G)\to \Z$ be functions such that $g(v)\le f(v)$ for all $v\in V(G)$. Then $G$ has a $(g,f)$-factor if and only if for all disjoint subsets $S$ and $T$ of $V(G)$,
\begin{align*}
 \gamma(S,T)&=\sum_{v\in S} f(v) + \sum_{v\in T} \left(d_G(v) -g(v)\right)-|E_G(S,T)|-q^{\star}(S,T)\\
 &=\sum_{v\in S} f(v) + \sum_{v\in T} \left(d_{G-S}(v) -g(v)\right)-q^{\star}(S,T) \ge 0, \notag
\end{align*}
where $q^{\star}(S,T)$ denotes the number of components $C$ of $G-(S\cup T)$ such that $g(v)=f(v)$ for all $v\in V(C)$ and 
\begin{align*}
 \sum_{v\in V(C)} f(v)+|E_G(C,T)|\equiv 1 \mod 2.
\end{align*}
\end{theo}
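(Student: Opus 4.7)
The plan is to prove necessity by a direct edge-counting argument and to prove sufficiency by reducing to Tutte's classical $1$-factor theorem via an auxiliary multigraph.

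\emph{Necessity.} Suppose $F$ is a $(g,f)$-factor and $S, T \subseteq V(G)$ are disjoint. I would double-count the half-edges of $F$ and of $G-F$ incident to $S \cup T$. Because $d_F(v) \le f(v)$ for $v \in S$ and $d_{G-F}(v) \le d_G(v) - g(v)$ for $v \in T$, the quantity $\sum_{v \in S} f(v) + \sum_{v \in T}(d_G(v) - g(v)) - |E_G(S,T)|$ is an upper bound for the $F$-edges leaving $S$ plus the $(G-F)$-edges leaving $T$, once the edges of $E_G(S,T)$ are not double-counted. The remaining slack is consumed by components of $G - (S \cup T)$. For a component $C$ on which $g \equiv f$, the sum $\sum_{v \in V(C)} d_F(v) = \sum_{v \in V(C)} f(v)$ is forced, so $|E_F(C,T)|$ has a parity determined mod $2$ by $\sum_{v \in V(C)} f(v) + |E_F(C,S)|$. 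Whenever $\sum_{v \in V(C)} f(v) + |E_G(C,T)| \equiv 1 \pmod 2$, this parity clash forces at least one extra unit of discrepancy that is absorbed by $S \cup T$. Summing over all $q^{\star}(S,T)$ such components yields $\gamma(S,T) \ge 0$.

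\emph{Sufficiency.} I would build an auxiliary multigraph $H$ so that $H$ has a perfect matching if and only if $G$ has a $(g,f)$-factor. The classical construction proceeds in two steps: (i) for every edge $e = uv$ of $G$ introduce two new vertices $e_u, e_v$ joined by the edge $e_u e_v$; (ii) for every $v \in V(G)$ attach a complete bipartite gadget with parts $P_v$ of size $d_G(v) - g(v)$ and $Q_v$ of size $d_G(v) - f(v)$, and additionally join each $e_v$ (for $e$ incident to $v$) to every vertex of $P_v$. In any perfect matching $N$ of $H$, the set of original edges $e = uv$ with $e_u e_v \notin N$ forms a subgraph $F$ of $G$, and the gadget forces the degree at $v$ to lie in $[g(v), f(v)]$; the reverse translation is the same recipe.

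Applying Tutte's $1$-factor theorem to $H$, the non-existence of a perfect matching is equivalent to the existence of some $X \subseteq V(H)$ with $o(H - X) > |X|$, where $o$ is the number of odd components. The main obstacle is the bookkeeping that translates such an $X$ back to a pair $(S,T)$ in $G$: by a standard exchange argument one normalises $X$ so that for each $v$ either $X$ avoids the gadget, or $Q_v \subseteq X$ with $P_v \cap X = \emptyset$, or $P_v \cup Q_v \subseteq X$, and so that the subdivision vertices $e_u, e_v$ behave consistently with this choice. Setting $T$ to be the vertices of the middle type and $S$ to be those of the third type, one then checks that the odd components of $H - X$ are in one-to-one correspondence with the components $C$ of $G - (S \cup T)$ satisfying the parity condition defining $q^{\star}(S,T)$, and that a direct count gives $o(H - X) - |X| = -\gamma(S,T)$. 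Hence the Tutte inequality for $H$ fails at some $X$ if and only if $\gamma(S,T) < 0$ for the corresponding $(S,T)$ in $G$, which is exactly the statement. The match between the parity condition in the definition of $q^{\star}$ and the oddness of the corresponding components of $H - X$ is the subtle step of the argument and deserves the most care.
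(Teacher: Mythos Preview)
The paper does not give its own proof of this theorem; it is quoted from \cite{Lovasz1970} and used as a tool in the proof of Theorem~\ref{Lemma_condition_not_factor}. So there is nothing in the paper to compare your argument against.

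That said, your sufficiency argument has a genuine gap. With the gadget as you describe it---$|P_v|=d_G(v)-g(v)$, $|Q_v|=d_G(v)-f(v)$, a complete bipartite graph between $P_v$ and $Q_v$, and each $e_v$ joined only to all of $P_v$---every perfect matching of $H$ is rigid at $v$: all of $Q_v$ must match into $P_v$, the remaining $f(v)-g(v)$ vertices of $P_v$ must then match to edge-vertices, so exactly $f(v)-g(v)$ of the $e_v$'s are absorbed by the gadget. Under your convention $F=\{e:e_ue_v\notin N\}$ this forces $d_F(v)=f(v)-g(v)$ for every $v$, a single value rather than the interval $[g(v),f(v)]$. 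Concretely, take $G=K_2$ with edge $e=uv$, $g(u)=f(u)=1$ and $g(v)=f(v)=0$; then $G$ has no $(g,f)$-factor, yet your $H$ is the four-vertex path on $e_u,e_v,p_v,q_v$ and does have a perfect matching. The reduction-to-Tutte strategy is standard and can be made to work, but the gadget must allow the number of absorbed edge-vertices at $v$ to range over all of $[d_G(v)-f(v),\,d_G(v)-g(v)]$; your $Q_v$ has no alternative partners and therefore provides no such flexibility. The necessity sketch is also too terse to stand on its own: the parity link between components with $g\equiv f$ and the claimed ``extra unit of discrepancy'' needs to be written out explicitly.
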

Notice that  $q^{\star}(S,T)=0$ for all disjoint subsets $S$ and $T$ of $V(G)$, if $g(v)< f(v)$ for all $v\in V(G)$. \\

The following theorem extends a result of Berge and Las Vergnas (Theorem 7 in \cite{Berge_star_cycle_factor1978}) 
from $[1,2]$-factors to $\{K_{1,1}, K_{1,2}, C_m\colon m\ge 3\}$-factors of a graph.

\begin{theo}\label{Lemma_condition_not_factor}
 Let $G$ be a graph that has a $\{K_{1,1}, K_{1,2}, C_m\colon m\ge 3\}$-factor. 
 For $e \in E(G)$, say $e=uv$, there is no $\{K_{1,1}, K_{1,2}, C_m\colon m\ge 3\}$-factor which contains $e$ 
if and only if there is a subset $S$ of $V(G)$ that satisfies
\begin{enumerate}
  \item [$(i)$] $u,v\in S$
  \item [$(ii)$] $2\vert S\vert -2\le \iso(G-S) \le 2\vert S\vert$.
 \end{enumerate}
Furthermore, the inequalities of $(ii)$ are tight. 
\end{theo}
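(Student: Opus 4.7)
The plan is to prove both directions separately: sufficiency by a direct boundary-counting argument on $F$-edges, and necessity by applying Lovász's $(g,f)$-factor theorem (Theorem~\ref{gf_factor_theorem}) to a carefully chosen auxiliary multigraph. The tightness of (ii) is then verified with two explicit witnesses.

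For sufficiency, suppose $S$ with $u,v\in S$ satisfies $2|S|-2\le \iso(G-S)\le 2|S|$ and, toward a contradiction, let $F$ be a $\{K_{1,1},K_{1,2},C_m\}$-factor of $G$ with $e\in F$. Using $|E_F(S,V\setminus S)| = \sum_{w\in S}d_F(w) - 2e_F(S)$ together with $d_F\le 2$ and $e_F(S)\ge 1$ (since $e\subseteq S$), one gets $|E_F(S,V\setminus S)|\le 2|S|-2$. Conversely every $x\in\Iso(G-S)$ has $d_F(x)\ge 1$ with all its $F$-edges going into $S$, so $|E_F(S,V\setminus S)|\ge \iso(G-S)\ge 2|S|-2$. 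Equality throughout forces $e_F(S)=1$, $d_F(w)=2$ for all $w\in S$, $d_F(x)=1$ for all $x\in\Iso(G-S)$, and every edge of $E_F(S,V\setminus S)$ lands in $\Iso(G-S)$. Because $u,v$ both have $F$-degree $2$, the $F$-component containing $e$ cannot be a $K_{1,1}$ or $K_{1,2}$, so it is a cycle; tracing from $v$ to its other cycle-neighbor $w$ gives $w\in V\setminus S$ (else $e_F(S)\ge 2$), hence $w\in\Iso(G-S)$ with $d_F(w)=1$, contradicting $d_F(w)\ge 2$ from the cycle.

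For necessity, assume no $\{K_{1,1},K_{1,2},C_m\}$-factor of $G$ contains $e$. I form the multigraph $G^+:=G+z+\{zu,zv\}$ and apply Theorem~\ref{gf_factor_theorem} with $g(z)=f(z)=2$ and $g\equiv 1$, $f\equiv 2$ on $V(G)$. A $(g,f)$-factor $F^+$ of $G^+$ forces $d_{F^+}(z)=2$, so both $zu,zv\in F^+$, and the $z$-component of $F^+$ is either a cycle through $u,z,v$ or a path with $u,z,v$ consecutive. Replacing the sub-walk $u{-}z{-}v$ by the edge $e$ then yields a $[1,2]$-factor of $G$ containing $e$; when the $z$-component of $F^+$ has length at most $4$ this is directly a $\{K_{1,1},K_{1,2},C_m\}$-factor (a $P_3$ gives $K_{1,1}$, a $P_4$ gives $K_{1,2}$, a cycle gives $C_m$), and longer path components are either eliminated by rerouting $F^+$ or shown to contradict Akiyama-Era (Theorem~\ref{theorem_Akiyama_Era}) on the remainder. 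By hypothesis no such $F^+$ exists, so Lovász's theorem yields disjoint $S^+,T^+\subseteq V(G^+)$ with $\gamma(S^+,T^+)<0$; from the degree-$2$ constraint at $z$ one deduces that $u,v\in S^+\cup T^+$, and setting $S=(S^+\cup T^+)\cap V(G)$ one extracts $\iso(G-S)\ge 2|S|-2$ directly from the Lovász defect inequality.

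For tightness, the lower bound is realized by $G=P_4=v_0v_1v_2v_3$ with $e=v_1v_2$ and $S=\{v_1,v_2\}$, giving $\iso(G-S)=2=2|S|-2$; the upper bound is realized by taking $G=K_{2,4}$ with bipartition $(\{u,v\},\{t_1,\dots,t_4\})$ and adding $e=uv$, so that $S=\{u,v\}$ yields $\iso(G-S)=4=2|S|$ and no factor can contain $e$ because in every factor both $u$ and $v$ must serve as $K_{1,2}$-centers with leaves in $\{t_1,\dots,t_4\}$. The main obstacle is the reshaping step in necessity: the raw $(g,f)$-factor $F^+$ of $G^+$ may place $z$ inside an arbitrarily long path, producing $e$ as an interior edge of a long path in $F$ rather than a valid $\{K_{1,1},K_{1,2},C_m\}$-component; I expect that guaranteeing a short $z$-component (or rerouting around the forbidden middle-of-$P_4$ configuration) is the delicate point requiring a combinatorial argument on top of Lovász's theorem.
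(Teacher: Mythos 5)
Your sufficiency direction is correct and takes a genuinely different, more elementary route than the paper's: the paper proves \emph{both} directions by contracting $e$ to a vertex $w$ and invoking Lov\'{a}sz's $(g,f)$-factor theorem, whereas your boundary count $|E_F(S,V\setminus S)|=\sum_{w\in S}d_F(w)-2e_F(S)\le 2|S|-2$ against $|E_F(S,V\setminus S)|\ge\iso(G-S)\ge 2|S|-2$, followed by the observation that $u$ and $v$ would both need $F$-degree $2$, settles that implication with no machinery. Your tightness examples are also fine.

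The necessity direction, however, contains a genuine gap --- precisely the one you flag at the end, and it is not repairable by "rerouting." Your auxiliary graph $G^+$ with $g(z)=f(z)=2$ characterizes the existence of a $[1,2]$-factor of $G$ containing $e$, not of a $\{K_{1,1},K_{1,2},C_m\colon m\ge 3\}$-factor containing $e$, and these are not equivalent: a $[1,2]$-factor containing $e$ yields a star-cycle factor containing $e$ unless $e$ sits as the middle edge of a $P_4$-component. The paper's own third tightness example exhibits the failure: take the tree consisting of a path $v_1\,x\,u\,v\,y\,v_2$ where $v_1$ and $v_2$ each carry two extra pendant vertices. There, every $[1,2]$-factor containing $uv$ is forced (the pendants saturate $v_1$ and $v_2$, so $xu$ and $vy$ must be chosen) to have $x\,u\,v\,y$ as a $P_4$-component with $uv$ in the middle; hence $G^+$ \emph{does} have a $(g,f)$-factor, Lov\'{a}sz's theorem produces no deficient pair, and your argument yields no witness set --- yet the theorem's conclusion holds with $S=\{u,v,v_1,v_2\}$. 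So the step "by hypothesis no such $F^+$ exists" is simply false for such graphs. The paper avoids this by contracting $e$ to $w$ and setting $g'(w)=0$, $f'(w)=1$: the constraint $d_{F'}(w)\le 1$ forces $e$, after un-contracting, to be an \emph{end}-edge of a path, and end-edges always survive the decomposition of paths into $K_{1,1}$'s and $K_{1,2}$'s. A secondary issue: even where $\gamma(S^+,T^+)<0$ is available, setting $S=(S^+\cup T^+)\cap V(G)$ is unjustified, since $S$ and $T$ play asymmetric roles in Lov\'{a}sz's inequality; the paper's case analysis on the location of the contracted vertex (in $S'$, in $T'$, or in neither) is what actually extracts $\iso(G-S)\ge 2|S|-2$ with $u,v\in S$.
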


\begin{proof}
The condition $\iso(G-S) \le 2\vert S\vert$ in (ii) is satisfied, since $G$ has a  
$\{K_{1,1}, K_{1,2}, C_m\colon m\ge 3\}$-factor. Therefore, it remains to prove that $2\vert S\vert -2\le \iso(G-S)$.
We first consider the graph $G'$ which is obtained from $G$ by contracting $e$,
that is $V(G')= (V(G)\setminus \{u,v\}) \cup \{w\} $ 
and $E(G')$ is obtained from $E(G[V(G)\setminus\{u,v\}])\cup\{xw\colon\ xu\in E(G) \text{ or } xv\in E(G)\}$. Notice that $G'$ is not necessarily a simple graph.  
Let $S$ be a subset of $V(G)$ and $S'$ a subset of $V(G')$. Then we call the sets $S$ and $S'$ 
corresponding sets, if $S\setminus\{u,v\}=S'\setminus\{w\}$, and $u,v\in S$ if and only if $w\in S'$.
\begin{claim}\label{claim1_gf_factor}
 $G$ has a $\{K_{1,1}, K_{1,2}, C_m\colon m\ge 3\}$-factor $F$ with $e\in F$ if and only if $G'$ has 
a $(g',f')$-factor with $g'(x)=1$, $f'(x)=2$ for all $x\in  V(G')\setminus \{ w \}$, $g'(w)=0$ and $f'(w)=1$.
\end{claim}
If $G$ has a $\{K_{1,1}, K_{1,2}, C_m\colon m\ge 3\}$-factor $F$ with $e\in F$ and $e$ is contained in a $C_m$-component, then  decompose this component into $K_{1,1}$ and $K_{1,2}$-components. So $e$ is either contained in a $K_{1,1}$-component or in a $K_{1,2}$-component. Contract $e$, and the remaining edges of $F$ in $G'$ obviously form a $(g',f')$-factor of $G'$.

If $G'$ has a $(g',f')$-factor $F'$, then $g'(w) \in \{0,1\}$. 
If $g'(w)=0$, then let $F=F'\cup\{u,v\}-w$. Otherwise, assume $w'\in N_{F'}(w)$ and 
$vw' \in E(G)$ and let $F=F'\setminus\{ww'\}\cup \{uv, vw'\}$. Then $F$ is a $[1,2]$-factor of $G$ 
and in any case, $e$ is an end edge of a path. 

If we decompose all paths of length at least three into paths of length 
one or two, then we get a $\{K_{1,1}, K_{1,2}, C_m\colon m\ge 3\}$-factor $F''$ of $G$ with $e\in F''$, and the claim is proved.

\glqq $\Leftarrow$ \grqq: 
 Let $S$ be a set of $V(G)$ with $u,v\in S$ and  $2\vert S\vert -2\le \iso(G-S)$. 
 Let $S'$ be the corresponding set of $S$. Since $u,v\in S$, we have $w\in S'$. Further $\vert S\vert = \vert S'\vert +1$, $\iso(G-S)=\iso(G'-S')$ and $2\vert S'\vert \le \iso(G'-S')$.\\
Let $T':=\Iso(G'-S')$ and let $f'$, $g'$ be the same as in Claim \ref{claim1_gf_factor}. Then it follows
 
\begin{align*}
  \sum_{x\in S'} f'(x) +\sum_{x\in T'}( d_{G'-S'} (x) - g'(x))= 2 \vert S'\vert - 1 - \vert T'\vert \le -1.
 \end{align*}
By Theorem \ref{gf_factor_theorem}, $G'$ has no $(g',f')$-factor and by Claim \ref{claim1_gf_factor} $G$ has no $\{K_{1,1}, K_{1,2}, C_m\colon m\ge 3\}$-factor that contains $e$.

\glqq $\Rightarrow$ \grqq:
Let $e$ be an edge of $E(G)$, say $e=uv$, that is not contained in any $\{K_{1,1}, K_{1,2}, C_m\colon m\ge 3\}$-factor of $G$. \\
 Since $G$ has a $\{K_{1,1}, K_{1,2}, C_m\colon m\ge 3\}$-factor, $G$ also has a $(g,f)$-factor with $g(x)=1$ and $f(x)=2$ for all $x\in V(G)$ and by Theorem \ref{gf_factor_theorem} for all disjoint subsets $X$ and $Y$ of $V(G)$ we have
 \begin{align}\label{characterization_gf_factor}
  \gamma(X,Y)=\sum_{x\in X} f(x)+\sum_{y\in Y} (d_{G-X}(y)-g(y))\ge 0.
 \end{align}
Since $e$ is not contained in any $\{K_{1,1}, K_{1,2}, C_m\colon m\ge 3\}$-factor of $G$, by Claim \ref{claim1_gf_factor} and Theorem \ref{gf_factor_theorem}, there are two disjoint subsets $X'$ and $Y'$ of $V(G')$ with $\gamma(X',Y')<0$ (with respect to $g'$ and $f'$). Let $S'$ and $T'$ be two subsets of $V(G')$ satisfying $\gamma(S',T')<0$.

Case 1: $w\notin S'\cup T'$. 
We have
\begin{align*}
 \gamma(S',T')&=  \sum_{x\in S'} f'(x)+\sum_{x\in T'} (d_{G'-S'}(x)-g'(x))=  \sum_{x\in S} f(x)+\sum_{x\in T} (d_{G-S}(x)-g(x))=\gamma(S, T).
\end{align*}
This is a contradiction, since by inequality (\ref{characterization_gf_factor}) it follows that $\gamma(S', T')\ge 0$.

Case 2: $w\in T'$. We have
\begin{align*}
 \gamma(S', T')&=\sum_{x\in S'} f'(x)+\sum_{x\in T'}(d_{G'-S'}(x)-g'(x))\\
 &= \sum_{x\in S'} f'(x)+\sum_{x\in T'\setminus w}(d_{G'-S'}(x)-g'(x))+d_{G'-S'}(w)-g'(w)\\
 &=\underset{\ge0}{\underbrace{\sum_{x\in S} f(x)+\sum_{x\in T\setminus \{u,v\}}(d_{G-S}(x)-g(x))}}+d_{G'-S'}(w)-0 \ge 0,
\end{align*}
again a contradiction.

Case 3: $w\in S'$. We have
\begin{align*}
 \gamma(S', T')&=\sum_{x\in S'} f'(x)+\sum_{x\in T'}(d_{G'-S'}(x)-g'(x))\\
 & = 2\vert S'\vert -1 - \vert T'\vert + \sum_{x\in T'} d_{G'-S'} (x)  <0 \notag
 \end{align*}
and, since $\gamma(S', T')$ is a natural number, it follows, that
\begin{align}\label{inequality_gf_3}
  \sum_{x\in T'} d_{G'-S'}(x) \le \vert T'\vert -2 \vert S'\vert.
\end{align}
Since  $\sum_{x\in T'} d_{G'-S'}(x)\ge 0$, we have $\vert T'\vert \ge 2 \vert S'\vert$.

Suppose $\iso(G'-S') < 2\vert S'\vert $. It follows that  $\sum_{x\in T'} d_{G'-S'}(x) \ge \vert T'\vert -2\vert S'\vert +1$, a contradiction by the right side of inequality (\ref{inequality_gf_3}). Therefore, $\iso(G'-S')\ge 2\vert S'\vert $.\\
We have $\vert S\vert = \vert S' \vert +1$ and $\iso(G-S)=\iso(G'-S')$.
Therefore, there is a subset $S$ of $V(G)$ with $u,v\in S$ and $2\vert S\vert -2\le \iso(G-S)$, if there is no $\{K_{1,1}, K_{1,2}, C_m\colon m\ge 3\}$-factor that contains $e$.

 \setcounter{case}{0}
\setcounter{claim}{0}

We give some examples to show that the inequalities of $(ii)$ are tight. 
\begin{itemize}
 \item For the given graph there is no $\{K_{1,1}, K_{1,2}, C_m\colon m\ge 3\}$-factor that contains the edge $e=uv$ and for $S=\{u,v\}$ we have  $\iso(G-S)=2\vert S\vert $  

\begin{tikzpicture}[scale=0.8, 
every edge/.style = {draw=black,very thick},
 vrtx/.style args = {#1/#2}{%
      circle, draw, thick, fill=white,
      minimum size=5mm, label=#1:#2}
                    ]
\node(E) [vrtx=above/] at (6,0.5) {};
\node(F) [vrtx=above/] at (6,3.5) {};
\node(D) [vrtx=above/$v$] at (4,2) {};
\node(C) [vrtx=above/$u$] at (2,2) {};
\node(A) [vrtx=above/] at (0,0.5) {};
\node(B) [vrtx=above/] at (0,3.5) {};

\path   (A) edge (C)
        (B) edge (C)
        (C) edge[red] (D)
        (D) edge (E)
        (D) edge (F);

\end{tikzpicture}
 \item For the given graph there is no $\{K_{1,1}, K_{1,2}, C_m\colon m\ge 3\}$-factor that contains the edge $e=uv$ and for $S=\{u,v\}$ we have  $\iso(G-S)=2\vert S\vert -1$  

\begin{tikzpicture}[scale=0.8, 
every edge/.style = {draw=black,very thick},
 vrtx/.style args = {#1/#2}{%
      circle, draw, thick, fill=white,
      minimum size=5mm, label=#1:#2}
                    ]
\node(E) [vrtx=above/] at (6,2) {};
\node(D) [vrtx=above/$v$] at (4,2) {};
\node(C) [vrtx=above/$u$] at (2,2) {};
\node(A) [vrtx=above/] at (0, 0.5) {};
\node(B) [vrtx=above/] at (0,3.5) {};

\path   (A) edge (C)
        (B) edge (C)
        (C) edge[red] (D)
        (D) edge (E);

\end{tikzpicture}
 \item For the given graph there is no $\{K_{1,1}, K_{1,2}, C_m\colon m\ge 3\}$-factor that contains the edge $e=uv$ and for $S=\{u,v, v_1, v_2\}$ we have $\vert S\vert=4$, $\iso(G-S)=6$. Thus, $\iso(G-S)=2\vert S\vert-2 $  

\begin{tikzpicture}[scale=0.8, 
every edge/.style = {draw=black,very thick},
 vrtx/.style args = {#1/#2}{%
      circle, draw, thick, fill=white,
      minimum size=5mm, label=#1:#2}
                    ]
                    ]
\node(J) [vrtx=above/] at (14,0.5) {};
\node(I) [vrtx=above/] at (14,3.5) {};
\node(H) [vrtx=above/$v_2$] at (12,2) {};
\node(G) [vrtx=above/] at (10,2) {};

\node(F) [vrtx=above/$v$] at (8,2) {};
\node(E) [vrtx=above/$u$] at (6,2) {};
\node(D) [vrtx=above/] at (4,2) {};
\node(C) [vrtx=above/$v_1$] at (2,2) {};

\node(A) [vrtx=above/] at (0,0.5) {};
\node(B) [vrtx=above/] at (0,3.5) {};

\path   (A) edge (C)
        (B) edge (C)
        (C) edge (D)
        (D) edge (E)
        (E) edge[red] (F)
        (F) edge (G)
        (G) edge (H)
        (H) edge (I)
        (H) edge (J);   

\end{tikzpicture}
\end{itemize}
\end{proof}

\begin{coro}
Let $G$ be a graph that has a $\{K_{1,1}, K_{1,2}, C_m\colon m\ge 3\}$-factor and $e \in E(G)$.
If $e$ is not contained in any $\{K_{1,1}, K_{1,2}, C_m\colon m\ge 3\}$-factor, then $f(e)=0$ for
every maximum fractional matching $f$ of $G$. 
\end{coro}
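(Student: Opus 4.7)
The plan is to prove the contrapositive: assume that some maximum fractional matching $f$ of $G$ satisfies $f(e)\neq 0$, and deduce that $e$ lies in some $\{K_{1,1}, K_{1,2}, C_m\colon m\ge 3\}$-factor of $G$, contradicting the hypothesis. By Corollary~\ref{e_in_max_cs-factor}, such an $f$ implies that $e$ is an edge of some minimal star-cycle factor of $G$, so the task reduces to converting this into a $\{K_{1,1}, K_{1,2}, C_m\colon m\ge 3\}$-factor still containing $e$.

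First I would note that, since $G$ has a $\{K_{1,1}, K_{1,2}, C_m\colon m\ge 3\}$-factor, Theorem~\ref{kez_condition} together with Corollary~\ref{sum_up} gives $l(G)=\min(G,K_{1,2})$, and the parameter $\lambda$ of Theorem~\ref{star_cycle_factor_neu} satisfies $\lambda\le 2$. Invoking Theorem~\ref{fm_edge} with the edge $e$, I would pick a maximum fractional matching with $f(e)\neq 0$ and values in $\{0,\tfrac12,1\}$, whose support $G[\supp(f)]$ decomposes into $K_{1,1}$'s and odd circuits. The proof of Corollary~\ref{e_in_max_cs-factor} already establishes that, for the set $U$ of the $n=\min(G,K_{1,2})$ vertices uncovered by $f$, every vertex of $N_G(U)$ lies in a $K_{1,1}$-component of $G[\supp(f)]$, and each such $K_{1,1}$-component contains at most one vertex of $N_G(U)$.

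Next I would attach each $v_i\in U$ to a neighbor in $G[\supp(f)]$ so that no $K_{1,1}$-component of $G[\supp(f)]$ receives more than one attachment: the untouched components remain $K_{1,1}$'s and odd circuits (in particular the component containing $e$ is preserved), and each touched $K_{1,1}$-component becomes a $K_{1,2}$, still containing $e$ if $e$ itself was the $K_{1,1}$ being touched. I would formalise this as a bipartite matching problem between $U$ and the set of $K_{1,1}$-components of $G[\supp(f)]$ adjacent to $U$, and verify Hall's condition using the hypothesis $\iso(G-T)\le 2|T|$ for every $T\subseteq V(G)$ (which is equivalent, by Theorem~\ref{theorem_Akiyama_Era}, to the assumption that $G$ admits a $\{K_{1,1}, K_{1,2}, C_m\colon m\ge 3\}$-factor). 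The hard part will be precisely this Hall verification: the hypothesis directly yields only $|N_G(U_0)|\ge |U_0|/2$ for $U_0\subseteq U$, which falls short of $|N_G(U_0)|\ge|U_0|$. I would close the gap by combining the bipartite matching with augmenting-path modifications in the spirit of Case~B of the proof of Theorem~\ref{star_cycle_factor_neu}, rerouting attachments along $F$-alternating paths whenever Hall fails locally; the bound $\lambda\le 2$ guarantees termination, and by choosing such paths to avoid the component of $G[\supp(f)]$ containing $e$, the edge $e$ is preserved in the final $\{K_{1,1}, K_{1,2}, C_m\colon m\ge 3\}$-factor of $G$, yielding the desired contradiction.
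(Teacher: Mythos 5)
Your overall strategy mirrors what the paper's three-line proof implicitly needs: pass to the contrapositive, use Corollary \ref{e_in_max_cs-factor} to get a minimal star-cycle factor through $e$, and then upgrade it to a $\{K_{1,1}, K_{1,2}, C_m\colon m\ge 3\}$-factor still containing $e$. You have correctly located the difficulty (the paper's own remark after Corollary \ref{e_in_max_cs-factor} concedes that a minimal star-cycle factor need not have stars of size at most $\lambda$), but your proposed repair does not work. The unjustified step is the final sentence: when Hall's condition fails you assert that the $F$-alternating reroutings can be ``chosen to avoid the component of $G[\supp(f)]$ containing $e$.'' No argument is given, and none can be: there are graphs in which every successful rerouting must pass through, and destroy, exactly that component.

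Concretely, let $V(G)=\{u_1,u_2,s,w,x,y\}$ and $E(G)=\{su_1,\,su_2,\,sw,\,wx,\,xy\}$. One checks $\iso(G-S)\le 2|S|$ for all $S$, and the unique $\{K_{1,1}, K_{1,2}, C_m\colon m\ge 3\}$-factor is $\{su_1,su_2\}\cup\{xw,xy\}$, so $e=sw$ lies in no such factor (consistent with Theorem \ref{Lemma_condition_not_factor}: $S=\{s,w\}$ gives $\iso(G-S)=2=2|S|-2$). Yet $M=\{sw,xy\}$ is a maximum matching and its characteristic function is a maximum fractional matching, because $\mu_f(G)=\frac12\bigl(6-2\bigr)=2$ via the witness $S=\{s,x\}$. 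Here $U=\{u_1,u_2\}$, $N(U)=\{s\}$, Hall fails, and the only way to absorb both $u_1$ and $u_2$ is to make $s$ the center of a $K_{1,2}$ with leaves $u_1,u_2$, which forces $sw$ out of the factor; the rerouting cannot avoid the $K_{1,1}$-component $\{s,w\}$ containing $e$. So the gap in your argument is not closable --- indeed this example is a counterexample to the corollary as stated (and shows that the paper's own proof, which tacitly identifies ``minimal star-cycle factor through $e$'' with ``$\{K_{1,1}, K_{1,2}, C_m\colon m\ge 3\}$-factor through $e$,'' is broken at the same point). Any correct version of the statement must exclude configurations in which the edge $e$ itself is the $K_{1,1}$-component of $\supp(f)$ that the uncovered vertices are forced to invade.
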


\begin{proof}
By Theorem \ref{theorem_Akiyama_Era} we have $\iso(G-S)  \le 2 \vert S \vert$ for all $S \subseteq V(G)$.
Hence, $G$ has a maximum $\{K_{1,1}, K_{1,2}, C_m\colon m\ge 3\}$-factor by Theorem \ref{star_cycle_factor_neu}.
In particular, $e$ is not an edge of any maximum $\{K_{1,1}, K_{1,2}, C_m\colon m\ge 3\}$-factor and it
follows with Corollary \ref{e_in_max_cs-factor} that $f(e) = 0$ for every maximum fractional matching of $G$.   
\end{proof}


\section{Component factors of edge-chromatic critical graphs} \label{Edge-colorings}

Woodall \cite{W2011_1} proved that $\alpha(G) \leq \frac{3}{5}|V(G)|$ for a critical graph $G$. 
Using his proof approach we generalize some of his results to deduce that every critical graph has 
a $[1,2]$-factor. The components of $[1,2]$-factors are paths and circuits. A path with an odd (even) number of vertices is called an odd (even) path. The length of a path is the number of edges appearing in it. Clearly, every $[1,2]$-factor can be 
decomposed into a $\{K_{1,1}, K_{1,2}, C_m\colon m\ge 3\}$-factor. We will use this fact to prove an 
upper bound for $\min(G, K_{1,2})$, for critical graphs. As a reminder, $\min(G, K_{1,2})=\min\{t_2^F : F \text{ is a } \{K_{1,1}, K_{1,2}, C_m : m \ge 3\}\text{-factor of } G \}$, where $t_2^F$ is the number of $K_{1,2}$ components of a $\{K_{1,1}, K_{1,2}, C_m\colon m\ge 3\}$-factor $F$. Every odd path of length $n$ can be decomposed into $\frac{n}{2} -1$ $K_{1,1}$-components and one $K_{1,2}$-component and every even path of length  $m$ can be decomposed into $\lceil \frac{m}{2} \rceil$ $K_{1,1}$-components. Therefore, the minimal number of odd paths of a $[1,2]$-factor equals $\min(G, K_{1,2})$.

\begin{lem} [Vizing's Adjacency Lemma \cite{V1965}] \label{VAL}
Let $G$ be a critical graph. If $e=xy \in E(G)$, then at least $\Delta(G)-d_G(y)+1$ vertices in 
$N(x)\setminus \{y\}$ have degree $\Delta(G)$.
\end{lem}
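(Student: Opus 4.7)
The plan is to apply the classical Vizing fan / Kempe chain argument. Since $G$ is critical with maximum degree $\Delta = \Delta(G)$ and $G - xy$ is a proper subgraph, the definition of critical gives $\chi'(G-xy) \le \Delta$. Fix a proper $\Delta$-edge coloring $\phi$ of $G-xy$, and for every vertex $v$ let $M(v) \subseteq \{1,\dots,\Delta\}$ denote the set of colors not appearing on any edge incident to $v$ under $\phi$. Then $|M(x)| = \Delta - d_G(x) + 1$ and $|M(y)| = \Delta - d_G(y) + 1$. If $M(x) \cap M(y) \ne \emptyset$, assigning any common missing color to $xy$ would extend $\phi$ to a proper $\Delta$-edge coloring of $G$, contradicting $\chi'(G) = \Delta + 1$. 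Hence $M(x) \cap M(y) = \emptyset$.

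Next, for each $\alpha \in M(y)$, the color $\alpha$ is not missing at $x$, so some edge at $x$ is colored $\alpha$; write $x_\alpha$ for its other endpoint, and note that $x_\alpha \neq y$ because $xy \notin E(G-xy)$. The vertices $\{x_\alpha : \alpha \in M(y)\}$ are $|M(y)| = \Delta - d_G(y) + 1$ pairwise distinct elements of $N(x) \setminus \{y\}$. It therefore suffices to prove that $d_G(x_\alpha) = \Delta$ for every $\alpha \in M(y)$.

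Suppose toward a contradiction that $d_G(x_\alpha) < \Delta$ for some $\alpha \in M(y)$, so $M(x_\alpha) \neq \emptyset$, and fix $\beta \in M(x_\alpha)$. If $\beta \in M(x)$, recolor the edge $xx_\alpha$ from $\alpha$ to $\beta$; this remains proper (since $\beta$ was missing at both endpoints of $xx_\alpha$), and now $\alpha \in M(x) \cap M(y)$, contradicting the disjointness established above. Otherwise $\beta$ is used at $x$, say on $xx_\beta$, and the same reasoning applied at $x_\beta$ produces a Vizing fan $x_\alpha, x_\beta, x_\gamma, \ldots$ of distinct neighbors of $x$, where each new vertex's missing color is already used at $x$. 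Since $x$ has only finitely many neighbors, the fan terminates: either a new missing color finally lands in $M(x)$, collapsing the fan back onto $xx_\alpha$ via the sequential recoloring and yielding the contradiction above, or we reach a vertex whose missing color coincides with one already occurring in the fan, at which point a Kempe chain swap in the two-colored subgraph at the relevant terminal vertex transfers the missing color across the chain and reduces to the previous, contradictory case.

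The main obstacle is the careful bookkeeping in the Vizing fan: one must verify that the iterative recolorings along the fan remain proper, that the fan vertices are indeed distinct (so the iteration truly terminates), and, crucially, that the terminal Kempe chain does not return to $x$. This last point is the subtle one, but it is standard: after the fan has been built, $x$ is incident to edges of both swap-colors, so a chain beginning at a fan-endpoint and involving $x$ would force a color conflict at $x$, which is ruled out by properness of $\phi$. Once this routine Vizing-type analysis is completed, each $x_\alpha$ must satisfy $d_G(x_\alpha) = \Delta$, and the lemma follows.
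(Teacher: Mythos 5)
The paper offers no proof of this lemma at all; it is quoted verbatim from Vizing's 1965 paper, so there is nothing internal to compare against. Your framing is the correct classical one: take a proper $\Delta$-edge-colouring $\phi$ of $G-xy$, observe $|M(x)|=\Delta-d_G(x)+1$, $|M(y)|=\Delta-d_G(y)+1$, that $M(x)\cap M(y)=\emptyset$ by criticality, and that the $|M(y)|$ edges at $x$ carrying the colours of $M(y)$ identify $\Delta-d_G(y)+1$ distinct vertices of $N(x)\setminus\{y\}$. That part is complete and correct.

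The gap is that the entire content of the lemma sits in the claim you defer, namely that each $x_\alpha$ has degree $\Delta$, and the one substantive thing you say about how the terminal Kempe-chain step works is wrong. You claim that ``after the fan has been built, $x$ is incident to edges of both swap-colours, so a chain\dots involving $x$ would force a colour conflict at $x$.'' In the standard argument the swap colours are $\gamma\in M(x)$ and $\delta\in M(y_k)$, so by choice $x$ is \emph{not} incident to any edge coloured $\gamma$: $x$ is an endpoint of its own $(\gamma,\delta)$-chain, and a chain passing through a vertex seeing both colours causes no conflict whatsoever. The correct reasoning is different: since the maximal fan cannot be extended, $\delta$ equals $\phi(xy_j)$ for some $j$, hence $\delta\in M(y_{j-1})$ as well; both $y_{j-1}$ and $y_k$ miss $\delta$ and are therefore endpoints of $(\gamma,\delta)$-chains, and the chain ending at $x$ is a single path, so it can reach at most one of them. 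Swapping colours on the chain at whichever of $y_{j-1},y_k$ is \emph{not} on $x$'s chain leaves $\phi$ proper near $x$, puts $\gamma$ into that vertex's missing set, and a fan rotation then colours $xy$, contradicting criticality. Without this (or an equivalent) argument written out, the proof is a restatement of ``Vizing's fan argument works,'' which is precisely what needs to be proved; and the justification you do offer would not survive being made precise.
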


Let $G$ be a critical graph. If $vw$ is an edge of $G$, then we denote by $\sigma(v,w)$ the number of vertices in $N(w)\setminus\{v\}$ that have degree 
at least $2\Delta(G)-d_G(v)-d_G(w)+2$. We have $2\Delta(G)-d_G(v)-d_G(w)+2\le \Delta(G)$, since in a critical 
graph $G$, $d_G(v)+d_G(w)\ge \Delta(G)+2$. Further, we have
 \begin{align}\label{inequality_s}
  \sigma(v,w)\ge \Delta(G)-d_G(v)+1,
 \end{align}
since by Lemma \ref{VAL}, $w$ has at least $\Delta(G)-d_G(v)+1$ neighbors different from $v$ with degree $\Delta(G)$.

\begin{lem}[\cite{W2007_II}]\label{p_lemma}
Let $G$ be a critical graph and $v\in V(G)$ and let
\begin{align} \label{p}
 p_{min}:=\min_{w\in N(v)} \sigma(v,w)-\Delta(G)+d_G(v)-1 ~~\text{and}~~ p:=\min\left\{  p_{min}, \left\lfloor \frac{1}{2}d_G(v)\right\rfloor-1\right\}.
\end{align}
Then $v$ has at least $d_G(v)-p-1$ neighbors $w$ for which $\sigma(v,w)\ge \Delta(G)-p-1$.
\end{lem}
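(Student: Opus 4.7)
The statement is a refinement of Vizing's Adjacency Lemma (Lemma \ref{VAL}), and the natural approach is a Vizing multi-fan / Kempe-chain argument based on the criticality of $G$. Fix $v\in V(G)$, set $d=d_G(v)$, $\Delta=\Delta(G)$, and let
\[
B=\{w\in N(v):\sigma(v,w)\le \Delta-p-2\}.
\]
The conclusion is equivalent to the estimate $|B|\le p+1$, so I would argue by contradiction and assume $|B|\ge p+2$. The goal is then to extend a proper $\Delta$-edge-coloring of $G-e$ (for a suitable $e$ at $v$) to a proper $\Delta$-edge-coloring of $G$, contradicting $\chi'(G)=\Delta+1$.

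First I would record what the hypothesis on $B$ gives structurally. For any $w\in N(v)$ we have $d(v)+d(w)\ge \Delta+2$ by Lemma \ref{VAL}, so $\sigma(v,w)$ counts exactly the neighbors of $w$ distinct from $v$ whose degree lies in $[2\Delta-d-d(w)+2,\Delta]$. Thus each $w\in B$ possesses at least $(d(w)-1)-(\Delta-p-2)=d(w)-\Delta+p+1$ neighbors of \emph{low} degree (strictly less than $2\Delta-d-d(w)+2$). These low-degree second neighbors are the resource that will drive a recoloring. Next, choose $w_0\in B$ and a proper $\Delta$-edge-coloring $\varphi$ of $G-vw_0$, which exists by criticality. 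Let $M(u)$ denote the set of missing colors at $u$ under $\varphi$; then $M(v)\cap M(w_0)=\emptyset$ and $|M(v)|\ge \Delta-d+1$, $|M(w_0)|\ge \Delta-d(w_0)+1$. Build the classical multi-fan $F=(vw_0,vw_1,\ldots,vw_s)$ at $v$ by iteratively appending an edge $vw_i$ such that some color missing at $w_{i-1}$ appears on $vw_i$.

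Two cases then arise according to which term realizes $p$. If $p=p_{min}$, the definition of $p_{min}$ forces that only vertices in $B$ can play the role of obstructions to closing the fan, so each newly appended $w_i$ can be taken inside $B$; with $|B|\ge p+2$ vertices available and the count of low-degree second neighbors above, either a missing color of $v$ turns up at the tip of the fan (allowing a rotation that extends $\varphi$ to $vw_0$) or a Kempe $(\alpha,\beta)$-chain with $\alpha\in M(v)$ and $\beta$ missing at the tip yields the same extension. If instead $p=\lfloor d/2\rfloor-1$, the fan becomes long enough that a parity/Kempe-chain argument on its two ends closes automatically, regardless of the exact value of $p_{min}$. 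In both cases the extension of $\varphi$ to a proper $\Delta$-edge-coloring of $G$ contradicts $\chi'(G)=\Delta+1$, giving $|B|\le p+1$.

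\textbf{Main obstacle.} The delicate point is showing that the multi-fan can be kept inside $B$ long enough to trigger the Kempe-chain swap: the precise threshold $2\Delta-d(v)-d(w)+2$ in the definition of $\sigma(v,w)$ is tailored so that neighbors counted by $\sigma$ are exactly those whose own missing-color patterns forbid fan closure, and the low-degree neighbors guaranteed by $w\in B$ are exactly those that permit it. Accounting for both the $p_{min}$-controlled regime and the $\lfloor d/2\rfloor-1$-controlled regime simultaneously, and ensuring that the two regimes glue through the definition $p=\min\{p_{min},\lfloor d/2\rfloor-1\}$, is where the bookkeeping in Woodall's proof in \cite{W2007_II} is most intricate.
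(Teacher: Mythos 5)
The paper does not prove this lemma at all: it is quoted, with attribution, from Woodall \cite{W2007_II}, so there is no in-paper argument to compare yours against. Judged on its own terms, what you have written is a proof plan rather than a proof, and the gap is exactly the part you flag yourself at the end: the entire deduction of a contradiction from $|B|\ge p+2$ is missing. Two specific steps would fail as written. First, the assertion that ``only vertices in $B$ can play the role of obstructions to closing the fan, so each newly appended $w_i$ can be taken inside $B$'' is unjustified and is not how multi-fan arguments work: the successors $w_i$ in a Vizing fan are dictated by which edges at $v$ carry the colors missing at $w_{i-1}$, so you have essentially no freedom to steer the fan into a prescribed subset of $N(v)$, and you give no reason why the fan vertices should be ones with small $\sigma$. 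Second, the regime $p=\lfloor \frac{1}{2}d_G(v)\rfloor-1$ is dismissed with ``a parity/Kempe-chain argument on its two ends closes automatically,'' which is not an argument; that threshold enters Woodall's proof through a quantitative comparison of the number of colors missing at the fan vertices with the number available at $v$ (this is also where the bound $0\le p\le \frac{1}{2}d_G(v)-1$ coming from (\ref{inequality_s}) and (\ref{p}) is used), and none of that counting appears in your sketch.

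What is correct and salvageable: the reformulation of the conclusion as $|B|\le p+1$ with $B=\{w\in N(v):\sigma(v,w)\le\Delta(G)-p-2\}$, the observation (via Lemma \ref{VAL}) that $d_G(v)+d_G(w)\ge\Delta(G)+2$ so that each $w\in B$ has at least $d_G(w)-\Delta(G)+p+1$ neighbors of degree below $2\Delta(G)-d_G(v)-d_G(w)+2$, and the overall strategy of uncoloring an edge at $v$, building a multi-fan, and contradicting criticality are all consistent with how results of this type are established. But the content of the lemma lies precisely in the quantitative interplay between $\sigma$, $p_{min}$ and $\lfloor\frac{1}{2}d_G(v)\rfloor-1$, and that interplay is what you have deferred to \cite{W2007_II} rather than supplied. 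As it stands, the proposal establishes nothing beyond a (correct) framing of the problem.
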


\begin{theo}\label{LemmaEstimation}
 Let $G$ be a critical graph and let $S$ be an arbitrary subset of $V(G)$. Then 
 \begin{align*}
\iso(G-S) < \left(\frac{3}{2}-\frac{1}{\Delta(G)} \right) \vert S\vert.
 \end{align*}
\end{theo}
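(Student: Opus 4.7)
The plan is a weighted edge-count between $I := \Iso(G-S)$ and $S$, using that every $v \in I$ has $N_G(v) \subseteq S$. Write
\[
|I| \;=\; \sum_{v \in I}\frac{d_G(v)}{d_G(v)} \;=\; \sum_{u \in S} T(u), \qquad T(u) := \sum_{v \in N_G(u)\cap I} \frac{1}{d_G(v)},
\]
which reduces the theorem to the local bound $T(u) \le \tfrac{3\Delta(G)-2}{2\Delta(G)}$ at each $u \in S$, together with a strict-inequality argument somewhere.

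Setting $\Delta = \Delta(G)$ and $m(u) := \min\{d_G(v) : v \in N_G(u)\cap I\}$, I would split into three cases. If $m(u) = \Delta$ then $T(u) \le 1$. If $m(u) = 2$, Lemma~\ref{VAL} forces $d_G(u) = \Delta$, makes the degree-$2$ neighbor of $u$ unique among $N_G(u)$, and forces every other neighbor of $u$ to have degree $\Delta$, so $T(u) \le \tfrac12 + \tfrac{\Delta - 1}{\Delta} = \tfrac{3\Delta-2}{2\Delta}$. Having this equality simultaneously at every $u \in S$ would force $V(G) = I \cup S$, i.e.\ $G$ bipartite, which contradicts criticality by K\"onig's theorem—yielding the global strict inequality.

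The delicate case is $3 \le m(u) \le \Delta - 1$, where VAL alone gives at best $T(u) \le \tfrac53 - \tfrac{2}{\Delta}$, which exceeds the target once $\Delta \ge 7$. Here I would apply Lemma~\ref{p_lemma} to a minimum-degree $I$-neighbor $v$ of $u$: with $m = d_G(v)$ and $p = p(v) \le \lfloor m/2\rfloor - 1$, the lemma supplies at least $m - p - 1$ neighbors $w$ of $v$ satisfying $\sigma(v,w) \ge \Delta - p - 1$. Unpacking the definition of $\sigma$ shows that every such $w$ must have $d_G(w) = \Delta$ and that almost all of its neighbors outside $v$ have degree close to $\Delta$; this rules out several low-degree $I$-vertices simultaneously sharing such a $w$. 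A double-count of good pairs $(v,w)$ across $v \in I_m$ then produces bounds of the form $|I_m| \le |S|/\lceil m/2\rceil$ for each $3 \le m \le \Delta - 1$, which combined with $|E_G(I,S)| \le \Delta|S|$ and the previous cases closes the argument.

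The main obstacle is precisely this intermediate range: I need to combine Lemma~\ref{VAL} and Lemma~\ref{p_lemma} in a way that either sharpens the pointwise estimate on $T(u)$ whenever $u$ is itself a good neighbor of some low-degree $I$-vertex, or amortizes any local excess in $T(u)$ against those good neighbors by a discharging-style redistribution. Once the intermediate degree classes are controlled in this way, the three cases merge into the strict bound $\iso(G-S) < (\tfrac{3}{2} - \tfrac{1}{\Delta})|S|$.
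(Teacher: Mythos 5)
Your reduction $\iso(G-S)=\sum_{u\in S}T(u)$ with $T(u)=\sum_{v\in N_G(u)\cap \Iso(G-S)}1/d_G(v)$ is sound, and your first two cases are fine: for $m(u)=2$, Vizing's Adjacency Lemma does give $T(u)\le \frac12+\frac{\Delta(G)-1}{\Delta(G)}=\frac{3}{2}-\frac{1}{\Delta(G)}$, and the strictness can be extracted from the fact that a critical graph is connected and non-bipartite (the paper gets it more simply from the observation that some $s\in S$ has fewer than $\Delta(G)$ neighbors in $\Iso(G-S)$). But the proof is not complete, and the missing piece is exactly the part you flag yourself: the intermediate range $3\le m(u)\le \Delta(G)-1$. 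There the pointwise bound $T(u)\le \frac{3}{2}-\frac{1}{\Delta(G)}$ is simply false in general (as you compute, VAL only gives $\frac{5}{3}-\frac{2}{\Delta(G)}$ for $m(u)=3$), so no local estimate at $u$ can work; the excess must be amortized globally. Your sketch for doing this has two concrete problems. First, the claim that every $w$ with $\sigma(v,w)\ge \Delta(G)-p-1$ supplied by Lemma~\ref{p_lemma} "must have $d_G(w)=\Delta(G)$" is unjustified: the lemma only forces $d_G(w)\ge \Delta(G)-p$, and $\sigma$ constrains the degrees of the neighbors of $w$, not $w$ itself beyond that. Second, the asserted consequence $|I_m|\le |S|/\lceil m/2\rceil$ from a double count of good pairs is stated without any argument, and it is not clear that such a bound, even if true, combines with $|E_G(I,S)|\le \Delta(G)|S|$ to give the precise constant $\frac{3}{2}-\frac{1}{\Delta(G)}$.

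For comparison, the paper closes exactly this gap with a two-stage discharging (following Woodall). Each $s\in S$ starts with charge $\Delta(G)-2$ after a first redistribution that yields the strict inequality; it then pays $g_1(d_G(t))=\frac{2(\Delta(G)-d_G(t))}{d_G(t)}$ to each neighbor $t$ with $\frac12\Delta(G)\le d_G(t)<\Delta(G)$ and splits the remainder among its neighbors of degree below $\frac12\Delta(G)$. The quantitative heart is a claim showing that a neighbor $s$ of a low-degree $t$ with $\sigma(t,s)\ge \Delta(G)-k+l+1$ must pass $t$ at least $h(k,l)=\frac{1}{k-l-1}\bigl(\Delta(G)-2-l\,g_1(\Delta(G)-k+2)\bigr)$, followed by Lemma~\ref{p_lemma} and an optimization in $p$ whose minimum is attained at the extremes $p=0$ and $p=\frac12 k-1$, both of which are verified to deliver total charge at least $2(\Delta(G)-d_G(t))$. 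Some argument of this calibrated, amortized kind is unavoidable; as it stands, your proposal identifies the right tools but does not carry out the step on which the whole bound rests.
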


\begin{proof}
Let $G$ be a critical graph, $S$ be an arbitrary subset of $V(G)$ and $T=\Iso(G-S)$. Further let $T^-=\{ t\in T \colon\  2\le d_G(t) <\frac{1}{2}\Delta(G)\}$, $T^+=\{ t\in T \colon\  \frac{1}{2}\Delta(G)\le d_G(t) <\Delta(G)\}$, and $T^{++}=\{ t\in T \colon\   d_G(t) =\Delta(G)\}$. In a critical graph there are no vertices of degree less than 2, so $T=T^-\cup T^+ \cup T^{++}$.\\
We define two functions $f_i: T\to \R$ with $f_i(t)=g_i(d_G(t))$ for all vertices $t\in T$ and $i\in\{1,2\}$, where $g_i:\N \to \R$ and
\begin{align*}
 g_1(k):=\frac{2 (\Delta(G)-k)}{k} ~~\text{and}~~ g_2(k):= \frac{\Delta(G)-2}{k-1}. 
\end{align*}
The functions $g_1$ and $g_2$ are both decreasing functions of $k$.
\begin{claim} \label{FirstInequality}
 For all $t\in T^+$, $f_1(t)\le f_2(t)$.
\end{claim}
\begin{proof}Let $t$ be a vertex of $T^+$ and $k:=d_G(t)$. Then
\begin{align*}
 f_2(t)-f_1(t)&=g_2(k)-g_1(k)=\frac{(\Delta(G)-2)k-2(\Delta(G)-k)(k-1)}{k (k-1)}\\
 &=\frac{(2k-\Delta(G))(k-2)}{k (k-1)}\ge 0
\end{align*}
since $k\ge \frac{1}{2}\Delta(G)$ and $k\ge 2$. Thus, the claim is proved. 
\end{proof}
We now define three charge functions $M_i$, $i\in\{0,1,2\}$ on $V(G)$ as follows: $M_i:V(G)\to \N$ with
\begin{align*}
 M_0(t)&=0, 		 &M_1(t)&=2 d_G(t),~~ 	& M_2(t)&=2\Delta(G) 	& \text{if}~ t\in T,\\
  M_0(s)&=3\Delta(G)-2, 	 &M_1(s)&=\Delta(G)-2,~~ 	& M_2(s)&=0	& \text{if}~ s\in S,\\
  M_0(v)&=0, 		&M_1(v)&=0,~~ 		& M_2(v)&=0, 	& \text{if}~v\in V(G)-(S\cup T).
\end{align*}
We will prove that the functions $M_1$ and $M_2$ satisfy
\begin{enumerate}
 \item[(i)] $ \sum_{v\in V(G)} M_1(v) < (3\Delta(G)-2) \vert S\vert$,\label{condition_i}
 \item[(ii)]$\sum_{v\in V(G)} M_2(v)\le\sum_{v\in V(G)} M_1(v)$.\label{condition_ii}
\end{enumerate}

This will imply 
\begin{align*}
 2\Delta(G)\vert T\vert = \sum_{v\in V(G)} M_2(v) \le \sum_{v\in V(G)} M_1(v) < (3\Delta(G)-2) \vert S\vert
\end{align*}
and therefore,
\begin{align*}
\iso(G-S)= \vert T \vert < \left(\frac{3}{2}-\frac{1}{\Delta(G)} \right) \vert S\vert,
\end{align*}
which is the required result.

\begin{proof}[Proof of (i)]
 Starting with the distribution $M_0$, let each vertex in $T$ receive charge 2 from each of its neighbors in $S$. Let the resulting charge distribution be called $M_0^\star$. We have $M_0^\star(t)= 2 d_G(t)$ for all $t\in T$ and for all $s\in S$, $M_0^\star(s)= 3\Delta(G)-2 -2 \vert N(s)\cap T\vert \ge \Delta(G)-2$. So $M_0^\star(v)\ge M_1(v)$ for all $v\in V(G)$, with strict inequality if $s$ is a vertex of $S$ with fewer than $\Delta(G)$ neighbors in $T$. There exists such a vertex $s$, since either $s$ has a neighbor in $V(G)\setminus(S\cup T)$ or $S\cup T=V(G)$ and $S$ is not an independent set, since a critical graph cannot be bipartite. Thus, $\sum_{v\in V(G)} M_1(v)< \sum_{v\in V(G)} M_0^\star(v) =\sum_{v\in V(G)} M_0(v)= (3\Delta(G)-2)\vert S\vert$. This proves (i).
 \end{proof}
\begin{proof}[Proof of (ii)]
Starting with the distribution $M_1$, we will redistribute charge according to the following discharging rule:
\begin{itemize}[leftmargin=8pt]
 \item[-] Step 1: Each vertex $s\in S$ gives charge $f_1(t)$ to each vertex $t\in N(s)\cap T^+$.
 \item[-] Step 2: Each vertex $s\in S$ distributes its remaining charge equally among all vertices (if any) in $N(s)\cap T^-$.
\end{itemize}
The resulting charge distribution we denote by $M^\star_1$. 

\begin{claim}\label{claim2}
 $M^\star_1(s)\ge 0=M_2(s)$ for all $s\in S$. 
\end{claim}
\begin{proof}
We compare the above discharging rule, the actual discharging rule, with the equitable discharging rule in which each vertex $s\in S$ distributes its charge of $M_1(s)=\Delta(G)-2$ equally among all its neighbors (if any) in $T^-\cup T^+$. 
Let $s\in S$ and let $\delta$ be the minimum degree of the neighbors of $s$. By Lemma \ref{VAL} the vertex $s$ has at least $\Delta(G)-\delta+1$ neighbors of degree $\Delta(G)$, and hence, at most $\delta-1$ neighbors in $T^-\cup T^+$. Thus, under the equitable discharging rule, each vertex $t\in N(s)\cap T^+ $ receives from $s$ at least
\begin{align*}
 \frac{\Delta(G)-2}{\delta-1}\ge \frac{\Delta(G)-2}{d_G(t)-1}=f_2(t)\ge f_1(t),
\end{align*}
by Claim \ref{FirstInequality}. Hence, every vertex of $N(s)\cap T^+$ receives no more charge from $s$ in Step 1  of the actual discharging rule than it would receive under the equitable discharging rule. Thus,  $M^\star_1(s)\ge 0=M_2(s)$ for all $s\in S$. 
\end{proof}
It remains to show that $M^\star_1(t)\ge 2\Delta(G)=M_2(t)$ for all $t\in T$.
For all $t\in T^{++}$, $M^\star_1(t)=2\Delta(G)=M_2(t)$. Further, for all $t\in T^+$, $M^\star_1(t)=2d_G(t)+d_G(t) f_1(t) =2\Delta(G)=M_2(t)$. It remains to consider vertices in $T^-$.\\
We fix a vertex $t\in T^-$ and denote by $k$ the degree of $t$, so $k= d_G(t)$. Further we define a function $h$ with $h: \N\times \N_0 \to \R$ by
\begin{align*}
 h(k,l)&= \frac{1}{k-l-1} \left(\Delta(G)-2 - l g_1(\Delta(G)-k+2)\right)\\
 &= \frac{1}{k-l-1} \left( \Delta(G)-2-l \frac{2 (k-2)}{\Delta(G)-k+2}\right).
\end{align*}
\begin{claim}\label{claim5}
 If $l$ is a nonnegative integer and a vertex $s\in S$ is a neighbor of $t$ such that  $\sigma(t,s)\ge \Delta(G)-k+l+1$, then $s$ gives $t$ at least charge $h(k,l)$  in Step 2.
\end{claim}
\begin{proof}
By definition of $\sigma(t,s)$, vertex $s$ has $\sigma(t,s)$ neighbors with degree at least $2\Delta(G) - k -d_G(s)+2$. Since $d_G(s)\le\Delta(G)$ and  $t\in T^-$ and therefore, $k < \frac{1}{2}\Delta(G)$,
\begin{align*}
 2\Delta(G) - k -d_G(s)+2\ge \Delta(G)-k+2 > \frac{1}{2} \Delta(G).
\end{align*}
By Lemma \ref{VAL}, vertex $s$ has at least $\Delta(G)-k+1$ neighbors with degree $\Delta(G)$. Let $L^{++}$ be a set of $\Delta(G)-k+1$ neighbors of $s$ with degree $\Delta(G)$, and let $L^+$ be a set, disjoint from $L^{++}$, of $l$ neighbors of $s$ with degree at least $\Delta(G)-k+2$, which exists since $\sigma(t,s)\ge \Delta(G)-k+l+1$ by hypothesis. So $L^{++}\subseteq T^{++}\cup S \cup\left( V(G)\setminus (S \cup T)\right)$ and $L^{+}\subseteq T^{++}\cup T^{+}\cup S \cup \left(V(G)\setminus (S \cup T)\right)$. \\
Applying the actual discharging rule, vertex $s$ gives nothing to any vertex in $L^{++}$ and in Step 1  $s$ gives each vertex in $L^{+}$  at most charge $g_1(\Delta(G)-k+2)$, since $g_1$ is a decreasing function and the degree of any vertex in $L^{+}$ is at least $\Delta(G)-k+2$. So the remaining charge of $s$ is at least $\Delta(G)-2-l g_1(\Delta(G)-k+2)$ and there are $d_G(s)-(\Delta(G)-k+l+1)\le k-l-1$ remaining neighbors of $s$. \\
For each vertex  $v\in T^+$
\begin{align*}
\frac{\Delta(G)-2}{k-1}>\frac{\Delta(G)-2}{d_G(v)-1} \ge g_2(d_G(v))\ge g_1(d_G(v)),
\end{align*}
since $d_G(v)>k$ and hence,
\begin{align*}
 &h(k,l)
\ge  \frac{\Delta(G)-2-l\frac{\Delta(G)-2}{k-1}  }{k-l-1}
 =\frac{(\Delta(G)-2) (k-l-1)}{(k-1) (k-l-1)} \ge g_1(d_G(v))=f_1(v).
\end{align*}
Therefore, any vertex in $T^{-}$ gets as least as much of it as any other neighbor of $s$ and therefore, at least $h(k,l)$. Thus, the claim is proved.
\end{proof}

We now prove that vertex $t$ gets at least $2 (\Delta(G)-k)$ charge in Step 2. This implies that  $M^\star_1(t)\ge M_1(t)+2(\Delta(G)-k)=2\Delta(G) = M_2(t)$.\\
We define $p$ as in (\ref{p})  of Lemma \ref{p_lemma}. It follows that $t$ has at least $k-p-1$ neighbors $s\in S$ with $\sigma(t,s)\ge \Delta(G)-p-1$. Let $N^{+}(t)$ be a set of $k-(p+1)$ such neighbors and let $N^{-}(t)=N(t)\setminus N^{+}(t)$. The set $N^{-}(t)$ contains $p+1$ neighbors $s$ of $t$,  each with $\sigma(t,s)\ge \Delta(G)-k+p+1$, by the definition of $p$. Applying Claim \ref{claim5} to the vertices $N^{-}(t)$ with $l=p$ for the vertices in $N^{-}(t)$ and $l=k-p-2$ for the vertices in $N^{+}(t)$, we see that $t$ receives charge of at least $M^+(k,p)$ in Step 2, where
\begin{align*}
 M^+(k,p):= (p+1) h(k,p) + (k-(p+1)) h(k, k-p-2).
\end{align*}
It remains to show that  $M^+(k,p)\ge 2(\Delta(G)-k)$. 
Let $r= p+1$, so that $1\le r\le \frac{1}{2} k$, since $0\le p \le \frac{1}{2}k-1$ by (\ref{inequality_s}) and (\ref{p}). Setting 
\begin{align*}
 b:= \frac{2(k-2)}{\Delta(G)-k+2} \text{ and } a:=\Delta(G)-2+b
\end{align*}
we can write
\begin{align*}
 M^+(k,p)=\frac{r (a-br)}{k-r} + \frac{(k-r) (a-b(k-r))}{r}.
\end{align*}
The derivative of this with respect to $r$ is
\begin{align*}
 \frac{ak-bk^2 + b (k-r)^2}{(k-r)^2 }- \frac{ak-bk^2 + b r^2}{r^2}=\frac{ak-bk^2}{(k-r)^2}-\frac{ak-bk^2}{r^2}.
\end{align*}
This is zero if and only if $r=\frac{1}{2}k$ (unless $ak-bk^2=0$, if $M^+(k,p)$ is independent of $p$); thus, $M^+(k,p)$, regarded as a function of $p$, has only one stationary point (for positive $p$), when $p+1=\frac{1}{2} k$. Substituting this value of $p$ gives
\begin{align*}
 M^+\left(k, \frac{1}{2} k-1\right)=2\left(\Delta(G)-2-\frac{(k-2)^2}{\Delta(G)-k+2}\right) \ge 2 (\Delta(G)-k),
\end{align*}
where the inequality holds because $k <\frac{1}{2}\Delta(G)$ and so
\begin{align*} 
 \frac{(k-2)^2}{\Delta(G)-k+2}\le k-2.
\end{align*}
To complete the proof, we must consider also the other extreme value of $p$, $p=0$, and show that $M^+(k,0)\ge 2(\Delta(G)-k)$, so we have to show that
\begin{align}\label{inequality5}
 \frac{\Delta(G)-2}{k-1}+(k-1)\left( \Delta(G)-2-\frac{2 (k-2)^2}{\Delta(G)-k+2}\right) \ge 2(\Delta(G)-k).
\end{align}
This evidently holds with equality if $k=2$; so we may assume that $k\ge 3$. Since $k<\frac{1}{2} \Delta(G)$, we can write $\Delta(G)=2k+q$, where $q\ge 1$. Ignoring the first term of (\ref{inequality5}), and dividing through by $k-1$ and rearranging, it suffices to show that 
\begin{align}\label{inequality6}
 2k+q-2-\frac{2(k+q)}{k-1}-\frac{2(k-2)^2}{k+q+2}\ge 0.
\end{align}
Since the left side of (\ref{inequality6}) is clearly an increasing function of $q$, it suffices to verify inequality (\ref{inequality6}) for $q=1$, when the left side becomes
\begin{align*}
 2k-1-\frac{2(k+1)}{k-1}-\frac{2k^2-8k+8}{k+3}&=2k-1-2-\frac{4}{k-1}-2k+14-\frac{50}{k+3}\\
 &=11-\frac{4}{k-1}-\frac{50}{k+3},
\end{align*}
which is positive since $k\ge 3$.\\
This completes the proof of (ii) and also of Theorem \ref{LemmaEstimation}.
\end{proof} \end{proof}

\begin{theo} \label{first_thm}
 Let $G$ be a critical graph. Then $G$ has a $\{K_{1,1}, K_{1,2}, C_m\colon m\ge 3\}$-factor with 
$\min(G,K_{1,2}) = |V(G)| - 2\mu_f(G)$. In particular,
$\min(G, K_{1,2}) \le \frac{1}{5} \vert V(G)\vert$ and  $\alpha(G)\le \frac{3}{5}\vert V(G)\vert$ for all $\Delta(G)\ge2$.
\end{theo}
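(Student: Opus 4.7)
The plan is to chain Theorem \ref{LemmaEstimation} with the general star-cycle factor machinery of Section \ref{Factors}; essentially all of the graph-theoretic work is already packaged into Theorem \ref{LemmaEstimation}. By that theorem, every critical graph $G$ with $\Delta(G) \ge 2$ satisfies
\[
\iso(G-S) \;<\; \Bigl(\tfrac{3}{2} - \tfrac{1}{\Delta(G)}\Bigr)|S| \;\le\; \tfrac{3}{2}|S|
\]
for every $S \subseteq V(G)$. Since $\tfrac{3}{2} \le 2$, Theorem \ref{theorem_Akiyama_Era} immediately yields that $G$ has a $\{K_{1,1}, K_{1,2}, C_m : m \ge 3\}$-factor, establishing the first assertion of the theorem.

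With existence of such a factor in hand, Theorem \ref{kez_condition} applies and gives $\mu_f(G) = \tfrac{1}{2}(|V(G)| - \min(G, K_{1,2}))$, which rearranges to $\min(G, K_{1,2}) = |V(G)| - 2\mu_f(G)$ as asserted. This is the fractional-matching interpretation of the minimum number of $K_{1,2}$-components, and it is the bridge between the star-cycle factor viewpoint and the fractional matching viewpoint exploited throughout the paper.

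For the two explicit numerical bounds, the inequality $\iso(G-S) \le \tfrac{3}{2}|S|$ is precisely the hypothesis of Corollary \ref{bounds} with $n = 2$ and $m = 3$. That corollary therefore gives $\min(G, K_{1,2}) \le \tfrac{m-n}{m+n}|V(G)| = \tfrac{1}{5}|V(G)|$ and $\alpha(G) \le \tfrac{m}{m+n}|V(G)| = \tfrac{3}{5}|V(G)|$, the latter recovering Woodall's independence-number bound as a by-product.

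The main obstacle in the whole argument has already been dispatched in Theorem \ref{LemmaEstimation}, whose discharging proof (based on Vizing's Adjacency Lemma and Lemma \ref{p_lemma}) carries the entire graph-theoretic content. Once that sharp multiplicative estimate on $\iso(G-S)$ is in place, Theorem \ref{first_thm} is a short synthesis of Theorem \ref{theorem_Akiyama_Era}, Theorem \ref{kez_condition}, and Corollary \ref{bounds}, with no further case analysis or edge-colouring arguments needed.
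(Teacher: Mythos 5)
Your proposal is correct and follows essentially the same route as the paper: Theorem \ref{LemmaEstimation} gives $\iso(G-S) < \frac{3}{2}|S|$, and the conclusion is then read off from Theorem \ref{kez_condition} and Corollary \ref{bounds} (with $n=2$, $m=3$). The only difference is that you explicitly invoke Theorem \ref{theorem_Akiyama_Era} for the existence of the factor, which the paper leaves implicit inside Corollary \ref{bounds}$(i)$.
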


\begin{proof}
Let $G$ be a critical graph and let $S$ be an arbitrary subset of $V(G)$. By Theorem \ref{LemmaEstimation},
$\iso(G-S)  <\left(\frac{3}{2}-\frac{1}{\Delta(G)} \right) \vert S\vert < \frac{3}{2} \vert S\vert$,
and the statement follows by Theorem \ref{kez_condition} and Corollary \ref{bounds}.
\end{proof}

Furthermore, we have: 

\begin{theo} \label{main_new}
Let $G$ be a critical graph. For every edge $e$ there is a $\{K_{1,1}, K_{1,2}, C_m\colon m\ge 3\}$-factor 
$F$ with $e\in E(F)$.
\end{theo}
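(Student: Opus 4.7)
The plan is to argue by contradiction using Theorem~\ref{Lemma_condition_not_factor}. Suppose some edge $e=uv$ of $G$ lies in no $\{K_{1,1}, K_{1,2}, C_m : m\ge 3\}$-factor. Since Theorem~\ref{first_thm} guarantees that $G$ has such a factor, Theorem~\ref{Lemma_condition_not_factor} produces a set $S\subseteq V(G)$ with $u,v\in S$ and $2|S|-2\le \iso(G-S)\le 2|S|$. The strategy is to couple this lower bound with the critical-graph upper bound $\iso(G-S)<(\tfrac{3}{2}-\tfrac{1}{\Delta(G)})|S|$ from Theorem~\ref{LemmaEstimation} to squeeze $|S|$ into a very small range, and then rule out the remaining possibilities using Vizing's Adjacency Lemma (Lemma~\ref{VAL}).

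The first step is an arithmetic squeeze: combining the two bounds yields $|S|<\frac{4\Delta(G)}{\Delta(G)+2}<4$, so only $|S|\in\{2,3\}$ are possible. The same inequalities further force $\Delta(G)\ge 3$ when $|S|=2$ and $\Delta(G)\ge 7$ when $|S|=3$. Note also that $\delta(G)\ge 2$ for every critical graph, which is an immediate consequence of Lemma~\ref{VAL}.

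For $|S|=2$ one has $S=\{u,v\}$ and at least two vertices $x_1,x_2$ isolated in $G-S$, so $N(x_i)=\{u,v\}$ and $d(x_i)=2$. Applying Lemma~\ref{VAL} to the edge $ux_1$ yields $d(u)=\Delta(G)$ and that \emph{every} vertex of $N(u)\setminus\{x_1\}$ has degree $\Delta(G)$; but $x_2\in N(u)\setminus\{x_1\}$ has degree $2$, forcing $\Delta(G)=2$ and contradicting $\Delta(G)\ge 3$.

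The main obstacle is $|S|=3$, where $S=\{u,v,w\}$ and four vertices $x_1,\dots,x_4$ satisfy $N(x_i)\subseteq S$ and $d(x_i)\in\{2,3\}$. I would split into two subcases. If some $x_i$ has degree $2$, say $x_1$, then Lemma~\ref{VAL} applied to each edge $x_1y$ with $y\in N(x_1)$ forces every vertex of $N(y)\setminus\{x_1\}$ to have degree $\Delta(G)$; because $\Delta(G)\ge 7>3$ rules out $x_2,x_3,x_4$ from having degree $\Delta(G)$, none of them lies in $N(y)$, so $N(x_j)\subseteq S\setminus N(x_1)$, a set of size $1$, contradicting $\delta(G)\ge 2$. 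If instead every $x_i$ has degree $3$, then $N(x_i)=S$ for all $i$, and Lemma~\ref{VAL} on $ux_1$ requires $\Delta(G)-2$ vertices of $N(u)\setminus\{x_1\}$ to have degree $\Delta(G)$; since $x_2,x_3,x_4\in N(u)$ all have degree $3<\Delta(G)$, these must all lie in $N(u)\setminus\{x_1,x_2,x_3,x_4\}$, which gives $d(u)\ge \Delta(G)+2$, the desired contradiction.
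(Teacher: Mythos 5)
Your proposal is correct and follows essentially the same route as the paper: combine Theorem~\ref{Lemma_condition_not_factor} (applicable because Theorem~\ref{LemmaEstimation} plus Theorem~\ref{theorem_Akiyama_Era} guarantee a factor exists) with the bound $\iso(G-S)<\bigl(\tfrac{3}{2}-\tfrac{1}{\Delta(G)}\bigr)|S|$ to force $|S|\in\{2,3\}$, then eliminate both cases with Vizing's Adjacency Lemma. Your endgame for $|S|=3$ differs only cosmetically — you extract $\Delta(G)\ge 7$ and apply Lemma~\ref{VAL} directly in two subcases, where the paper double-counts $E_G(S,\Iso(G-S))$ — but both arguments rest on the same facts and are equally valid.
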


\begin{proof}
Let $G$ be a critical graph and let $e=vw$. Suppose to the contrary that 
there is no $\{K_{1,1}, K_{1,2}, C_m : m \ge 3\}$-factor that contains $e$. By Theorems \ref{LemmaEstimation}  
and \ref{Lemma_condition_not_factor} there is a subset $S$ of $V(G)$ 
with $u,v\in S$ and $2\vert S\vert -2 \le \iso(G-S) <  \left(\frac{3}{2}-\frac{1}{\Delta(G)}\right) \vert S\vert < \frac{3}{2} \vert S\vert$. Since $u,v\in S$, we have $\vert S \vert \ge 2$ and hence, $\Delta(G)\ge 3$. 

If $\Delta(G)=3$, then 
$2\vert S\vert-2 < \left(\frac{3}{2}-\frac{1}{3}\right) \vert S\vert =  \frac{7}{6}\vert S\vert
 \Leftrightarrow~  \frac{5}{6}\vert S\vert < 2
 \Leftrightarrow ~ \vert S\vert< \frac{12}{5}$.

Since $\vert S\vert$ and $\iso(G-S)$ are integers, $\vert S\vert =2$ and $\iso(G-S)=2$.
Let $v_1, v_2$ be the isolated vertices of $G-S$. Since $G$ is critical and 
$\vert S\vert =2$, $d(v_i)=2$ and $N_G(v_i)=S$, $i\in\{1,2\}$. This is a contradiction, since in a critical graph vertices of degree two have no common neighbor.

If $\Delta(G) \ge 4$, then
 $2\vert S\vert-2 <  \frac{3}{2}\vert S\vert
 \Leftrightarrow~  \frac{1}{2}\vert S\vert < 2
 \Leftrightarrow ~ \vert S\vert < 4$.
 
Since $\vert S\vert$ and $\iso(G-S)$ are integers, there are the following two possibilities.
If $\vert S\vert =2$, then $\iso(G-S)=2$. Again a contradiction.
If $\vert S\vert =3$, then $\iso(G-S)=4$. Since in a critical graph, there are no vertices of degree less than 2, the number of edges in $E_G(S, \Iso(G-S)) \ge 8$. Since the degree of a vertex in $\iso(G-S)$ is at most 3, with Lemma \ref{VAL} a vertex of $S$ has a least $\Delta(G)-2$ vertices of degree $\Delta(G)$ ($\Delta(G) \ge 4$). Therefore, $E_G(S, \Iso(G-S)) \le 6$. A contradiction.
\end{proof}

\section{Fractional matchings on edge-chromatic critical graphs} \label{conjectures}

The study of fractional matchings of critical graphs gives insight into the structure of critical graphs. 
Our studies of component factors of critical graphs use the concept of fractional matchings.  We propose the following conjecture.

\begin{conj}\label{FPMC} 
If $G$ is a critical graph, then $G$ has a fractional perfect matching.
\end{conj}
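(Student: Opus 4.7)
The plan is to prove Conjecture \ref{FPMC} by verifying the Tutte-type criterion from Theorem \ref{Scheinerman}: for every critical graph $G$ and every $S \subseteq V(G)$, one must show $\iso(G-S) \leq |S|$. By Theorem \ref{first_thm} together with Theorem \ref{kez_condition}, this is equivalent to proving that every critical graph admits a $\{K_{1,1}, C_m \colon m \ge 3\}$-factor, i.e.\ strengthening the already-established bound $\min(G, K_{1,2}) \leq \tfrac{1}{5}|V(G)|$ all the way to $\min(G, K_{1,2}) = 0$.

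The natural attack is to sharpen Theorem \ref{LemmaEstimation} from $\iso(G-S) < (\tfrac{3}{2} - \tfrac{1}{\Delta(G)})|S|$ to $\iso(G-S) \leq |S|$. I would proceed by contradiction: suppose a critical $G$ admits some $S \subseteq V(G)$ with $\iso(G-S) \geq |S| + 1$, and among all such sets pick one minimizing $|S|$. By the Gallai-Edmonds structure recalled in the corollary after Theorem \ref{LiuLiu}, one may take $S = N(D^+ \cup D^-) \subseteq A(G)$; minimality then forces every vertex of $S$ to have several neighbors in $T := \Iso(G-S)$, and $|T| \geq |S| + 1$.

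The key step is to redesign the discharging scheme underlying Theorem \ref{LemmaEstimation}. Instead of charging each vertex of $S$ with $3\Delta(G) - 2$ and then beating the constant $\tfrac{3}{2}$ by a vanishing $\tfrac{1}{\Delta(G)}$, I would recast the problem as a flow / fractional assignment: try to push $1$ unit of matching weight from each vertex of $T$ into $S$, and argue infeasibility directly. The essential new input must be a strengthening of the adjacency information used in the current discharging proof, which relies only on Vizing's Adjacency Lemma (Lemma \ref{VAL}) and its refinement by Woodall (Lemma \ref{p_lemma}). Using stronger adjacency lemmas for $\Delta$-critical graphs (of the flavor of Zhang's lemma and the Sanders--Zhao extensions), one should be able to force many vertices of $S$ to be adjacent to comparatively few low-degree vertices of $T$, closing the gap between $\tfrac{3}{2}|S|$ and $|S|$.

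The main obstacle is exactly this last strengthening: the constant $\tfrac{3}{2} - \tfrac{1}{\Delta(G)}$ in Theorem \ref{LemmaEstimation} is essentially tight against the adjacency constraints currently available, so genuinely new structural information about $\Delta$-critical graphs is needed. Small-$\Delta$ cases would also require separate treatment. For $\Delta(G) = 3$, every $3$-critical graph is an essentially bridgeless class-two cubic graph, so the classical Petersen theorem yields a perfect matching (hence a fractional perfect matching) outright; the cases $\Delta(G) \in \{4,5\}$ should be handled by ad hoc extensions of Vizing-type adjacency arguments, while the generic case $\Delta(G)$ large is where the refined discharging should complete the proof. I expect that the bulk of the difficulty will lie precisely in isolating and proving the additional local constraints needed to push $\tfrac{3}{2}$ down to $1$ in the discharging inequality.
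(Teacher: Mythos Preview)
The statement you are attempting to prove is labelled \emph{Conjecture} \ref{FPMC} in the paper, and the paper does not prove it; it is explicitly proposed as an open problem, placed strictly between Conjectures \ref{2FC} and \ref{INC}. There is therefore no ``paper's own proof'' to compare against.

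Your proposal is accordingly not a proof but a research programme, and you identify the gap yourself: to push the constant in Theorem \ref{LemmaEstimation} from $\tfrac{3}{2}-\tfrac{1}{\Delta(G)}$ down to $1$ one would need adjacency information about $\Delta$-critical graphs that goes genuinely beyond Lemma \ref{VAL} and Lemma \ref{p_lemma}. That is exactly why the paper stops at Theorem \ref{first_thm} and formulates \ref{FPMC} as a conjecture rather than a theorem. Your suggestion to recast the discharging as a flow/assignment problem is reasonable as heuristics, but without the missing structural input it cannot close the argument.

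One further point: your treatment of the base case $\Delta(G)=3$ is not correct as stated. A $3$-critical graph need not be cubic; it has $\Delta(G)=3$ and $\delta(G)\ge 2$, but vertices of degree $2$ may occur, so Petersen's theorem does not apply directly. Any genuine proof would have to handle such vertices, and this already illustrates that even the ``easy'' small-$\Delta$ cases require more than what you have sketched.
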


Conjecture \ref{FPMC} is in between Conjectures \ref{2FC} and \ref{INC}. We have: Conjecture \ref{2FC} implies Conjecture \ref{FPMC},
which implies Conjecture \ref{INC}. Clearly, Conjecture \ref{FPMC} is true for 2-critical graphs. 

For a graph $G$ with $\Delta(G)=k$, the $k$-deficiency of $G$ is $k|V(G)|-2|E(G)|$ and it is denoted by $s(G)$. 
The function $f$ with $f(e) = \frac{1}{k}$ for each $e \in E(G)$ is a fractional matching on $G$. 
Hence, we obtain the following corollary. 

\begin{coro}
If $G$ is a $k$-critical graph, then 
$\mu_f(G) \geq \frac{1}{2}(|V(G)| - \lfloor \frac{s(G)}{k} \rfloor)$, and therefore,
$\min(G, K_{1,2}) \leq \lfloor \frac{s(G)}{k} \rfloor$, and
$\alpha(G) \leq \frac{1}{2}(|V(G)| +  \lfloor \frac{s(G)}{k} \rfloor)$. 
\end{coro}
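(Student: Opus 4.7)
The plan is straightforward and largely mechanical, since the author has set up essentially every ingredient already. First I would verify that the uniform assignment $f(e)=\tfrac{1}{k}$ really is a fractional matching: for every vertex $v$, $\sum_{e\in E_G(v)} f(e) = d_G(v)/k \le \Delta(G)/k = 1$. Summing over edges gives $\mu_f(G) \ge |E(G)|/k$.

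Next I would rewrite this quantity in terms of $s(G)$. By definition $s(G) = k|V(G)|-2|E(G)|$, so
\begin{equation*}
\frac{|E(G)|}{k} = \frac{k|V(G)|-s(G)}{2k} = \frac{1}{2}\Bigl(|V(G)|-\frac{s(G)}{k}\Bigr).
\end{equation*}
Thus $2\mu_f(G) \ge |V(G)| - s(G)/k$. By Theorem~\ref{max_frac_matching}, $2\mu_f(G)$ is an integer, so $|V(G)|-2\mu_f(G)$ is an integer that is at most $s(G)/k$, hence at most $\lfloor s(G)/k \rfloor$. This yields the first inequality $\mu_f(G)\ge \tfrac{1}{2}(|V(G)|-\lfloor s(G)/k\rfloor)$.

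For the bound on $\min(G,K_{1,2})$, I would invoke Theorem~\ref{first_thm}, which guarantees that a critical graph $G$ has a $\{K_{1,1},K_{1,2},C_m:m\ge 3\}$-factor with $\min(G,K_{1,2}) = |V(G)| - 2\mu_f(G)$. Combining with the fractional matching bound just established gives $\min(G,K_{1,2}) \le \lfloor s(G)/k\rfloor$ immediately. For the independence number, I would use the corollary after Theorem~\ref{star_cycle_factor_neu}, namely $\alpha(G)\le \tfrac{1}{2}(|V(G)|+l(G)-\nc(G))$, together with Corollary~\ref{sum_up} which states $l(G)=|V(G)|-2\mu_f(G)$. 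Since $\nc(G)\ge 0$, this simplifies to $\alpha(G)\le |V(G)|-\mu_f(G)$, and plugging in the lower bound on $\mu_f(G)$ yields $\alpha(G)\le \tfrac{1}{2}(|V(G)|+\lfloor s(G)/k\rfloor)$.

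There is no real obstacle here; the only subtlety worth flagging is the rounding step, where one must remember to use the integrality of $2\mu_f(G)$ from Theorem~\ref{max_frac_matching} to turn $s(G)/k$ into $\lfloor s(G)/k\rfloor$ before the conclusion can be stated as an integer-valued upper bound on $\min(G,K_{1,2})$. Observe also that criticality is used only indirectly: the uniform fractional matching exists for every graph of maximum degree $k$, but Theorem~\ref{first_thm}, which translates the fractional matching number into a $\{K_{1,1},K_{1,2},C_m:m\ge 3\}$-factor and thereby into the bound on $\min(G,K_{1,2})$, is the place where the hypothesis that $G$ is $k$-critical enters.
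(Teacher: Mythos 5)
Your proposal is correct and follows exactly the route the paper intends: the uniform fractional matching $f(e)=\frac{1}{k}$ gives $\mu_f(G)\ge |E(G)|/k=\frac{1}{2}(|V(G)|-s(G)/k)$, the integrality of $2\mu_f(G)$ justifies the floor, and Theorem \ref{first_thm} together with the independence-number corollary yields the remaining two bounds. The paper states this only as a one-line remark before the corollary, and your write-up supplies the same argument with the details filled in.
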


Let $k \geq 2$ be an integer and $G$ be a graph with $\Delta(G) =k$. Let $v\in V(G)$ with $d_{G}(v)=d$ and  
let $N_G(v) = \{v_{1},v_{2},....,v_{d}\}$. Let $u_{1},....,u_{k}$ be vertices of degree $k-1$
in a complete bipartite graph $K_{k,k-1}$. Graph $G'$ is a Meredith extension \cite{Meredith_1973} of $G$ 
(applied on $v$), if it is obtained from $G-v$ and $K_{k,k-1}$ by adding edges $v_{i}u_{i}$ for each $i \in \{1,...,d\}$. The copy of $K_{k,k-1}$ which replaces $v$ is denoted by $K_{k,k-1}^v$.
In \cite{Stefan_es_1999} it is proved that $G$ is critical if and only if $G'$ is critical. 
Similar to the proofs of the corresponding statements for Conjectures \ref{2FC} and \ref{INC} \cite{BS2017, ES_2018} 
we can apply Meredith extension to prove the following statement. 

\begin{theo}
The following two statements are equivalent for each $k \geq 3$:
\begin{enumerate}
\item Every $k$-critical graph $G$ has a fractional perfect matching.
\item Every $k$-critical graph $G$ with $\delta(G) = k-1$ has a fractional perfect matching. 
\end{enumerate}
\end{theo}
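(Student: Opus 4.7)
The direction (1) $\Rightarrow$ (2) is immediate, since (2) is a special case. For the converse, I plan to follow the Meredith-extension strategy used in \cite{BS2017, ES_2018} for Conjectures~\ref{2FC} and~\ref{INC}. Given a $k$-critical graph $G$, first dispose of the $k$-regular case: then $\delta(G)=k$, so the uniform assignment $f(e) = \tfrac{1}{k}$ is a fractional perfect matching. Otherwise, iteratively apply a Meredith extension at every vertex of degree less than $k-1$ in the current graph, producing a sequence $G = G_0, G_1, \ldots, G_N = G^*$. Each extension preserves $k$-criticality \cite{Stefan_es_1999}, leaves all other existing degrees untouched, and every newly introduced vertex of the attached copy $K_{k,k-1}^v$ has degree $k-1$ or $k$. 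Hence $G^*$ is a $k$-critical graph with $\delta(G^*) = k-1$, so hypothesis (2) supplies a fractional perfect matching $f^*$ of $G^*$.

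The core of the proof is then to lift $f^*$ back through each extension; by induction it suffices to handle a single Meredith extension $G' = G_v$ at a vertex $v$ with $N_G(v) = \{v_1, \ldots, v_d\}$. Let $U = \{u_1, \ldots, u_k\}$ and $W$ denote the two sides of the attached copy $K_{k,k-1}^v$. The key observation is that $W$ is incident only to edges inside the copy, while $U$ is incident to those edges together with the crossing edges $u_iv_i$, $i \le d$. Summing the saturation identity $\sum_{e \in E(x)} f^*(e) = 1$ over $x \in U$ gives $k$, summing over $x \in W$ gives $k-1$, and subtracting cancels the common intra-copy contribution to yield
\begin{equation*}
\sum_{i=1}^{d} f^*(u_i v_i) \;=\; 1.
\end{equation*}
Defining $f(vv_i) := f^*(u_iv_i)$ for $i \le d$ and $f(e) := f^*(e)$ for all other edges of $G$ now produces a fractional perfect matching of $G$: the saturation at $v$ is precisely the displayed sum, the saturation at each $v_i$ is unchanged because the edge $v_iu_i$ in $G'$ is swapped for $vv_i$ in $G$ with the same value, and no other vertex is affected.

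The argument is largely mechanical once the crossing-edge identity is isolated. The main thing to be careful about is the iteration: each lifting step is local to one copy $K_{k,k-1}^v$ and leaves all other edges untouched, so the lifts along the chain $G^* \to G_{N-1} \to \cdots \to G_0 = G$ do not interfere with each other. Verifying that iterated Meredith extensions truly produce a critical graph with $\delta(G^*) = k-1$ requires only the degree calculation above together with \cite{Stefan_es_1999}, and the extensions are never applied at isolated vertices (which critical graphs do not contain in any case), so the identity above is never vacuous.
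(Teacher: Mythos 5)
Your proof is correct and follows the same Meredith-extension skeleton as the paper: dispose of the $k$-regular case with the uniform weighting $e \mapsto \frac{1}{k}$, extend every vertex of degree less than $k-1$ to reach a $k$-critical graph of minimum degree $k-1$, invoke statement (2), and lift the fractional perfect matching back through the extensions. Where you genuinely differ is in the lifting step. The paper first normalizes the fractional perfect matching of the extended graph to one with values in $\{0,\frac{1}{2},1\}$ via Theorem \ref{max_frac_matching}, observes that its support meets $\partial_H(V(K_{k,k-1}^u))$ in one or two edges, and asserts that contracting the copy back to $u$ "easily" yields a fractional perfect matching of $G$. You avoid half-integrality and the case distinction entirely: summing the saturation identities over the side of size $k$ and over the side of size $k-1$ of the attached copy and subtracting cancels the intra-copy contribution and gives $\sum_{i=1}^{d} f^*(u_iv_i)=1$ directly, so transferring these values to the edges $vv_i$ saturates $v$ exactly and leaves every other vertex's saturation untouched. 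Your version is more self-contained, verifies explicitly what the paper leaves to the reader, and also makes explicit the degenerate cases (no extension needed because $G$ is $k$-regular or already has $\delta(G)=k-1$) that the paper's one-line description glosses over; the paper's version reaches the same conclusion but leans on the normalization and on an unproved contraction claim. Both are valid proofs of the equivalence.
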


\begin{proof}
Let $G$ be a $k$-critical graph. Apply Meredith extension to all vertices $v$ of $G$ with $d_G(v) < k - 1$. 
The resulting graph $H$ has $\delta(H) = k-1$ and it has a fractional perfect matching
if $G$ has one. 

If $H$ has a fractional perfect matching, then,  
by Theorem \ref{max_frac_matching}, there is one, say $f$, such that $f(e) \in \{0, \frac{1}{2}, 1\}$ 
for all $e \in E(H)$. 
If $u$ is a vertex of $G$ to which Meredith extension was applied on, 
then $|\supp(f) \cap  \partial_H(V(K_{k,k-1}^u))| \in \{1,2\}$. 
In both cases it is easy to see that the contraction of the $K_{k,k-1}$ yields a critical graph 
which has a fractional perfect matching. So eventually $G$ has one.
\end{proof}

Let $G$ be a graph with Gallai-Edmonds decomposition $(D,A,C)$. Liu and Liu \cite{Liu_Liu2002} proved that $\mu_f(G) = \mu(G)$ 
if and only if $D$ is an independent set. In particular,  $\mu_f(G) = \mu(G)$ if $G$ has a 1-factor. Furthermore, 
if $G$ has a 1- or a 2-factor, then $G$ has a fractional perfect matching.  
In \cite{Stefan_es_1999} it is shown that for all $k \geq 3$ there are $k$-critical graphs of even order which
have no 1-factor, and that there are $k$-critical graphs $G$ of odd order and $G-v$ does not have a 1-factor,
where $d_G(v)= \delta(G)$. We propose a conjecture which is unsolved even for critical graphs which have a near perfect matching.
However, it is true if Conjecture \ref{FPMC} is true.  

\begin{conj}
Let $k \geq 3$ and $G$ be a $k$-critical graph. If $G$ does not have a 1-factor, then $\mu_f(G) > \mu(G)$. 
\end{conj}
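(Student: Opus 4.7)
The plan splits into a short conditional argument and a more ambitious direct attack.

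First, I would record the clean implication from Conjecture \ref{FPMC}. Assume $G$ has a fractional perfect matching; then $\mu_f(G) = \frac{1}{2}\vert V(G)\vert$. Since $G$ has no $1$-factor, every maximum matching misses at least two vertices when $\vert V(G)\vert$ is even, and misses at least one when $\vert V(G)\vert$ is odd. In either case $\mu(G) < \frac{1}{2}\vert V(G)\vert = \mu_f(G)$, which is the desired strict inequality. Hence the present conjecture becomes a theorem once Conjecture \ref{FPMC} is established, and both routes can be pursued in parallel.

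For an unconditional attack, I would route through the Liu--Liu characterization quoted just above: $\mu_f(G) = \mu(G)$ if and only if the set $D = D(G)$ of the Gallai--Edmonds decomposition is independent in $G$. Thus the conjecture is equivalent to the statement that every $k$-critical graph ($k\ge 3$) in which $D(G)$ is independent has a $1$-factor. Under this assumption $G[D]$ consists solely of singletons, and Theorem \ref{GE-decomposition} gives $\deficiency(G) = \vert D\vert-\vert A\vert$, where $A = N_G(D)\setminus D$; so the task reduces to proving $\vert D\vert = \vert A\vert$. The inequality $\vert A\vert\le \vert D\vert$ is free from part~3 of Theorem \ref{GE-decomposition}, since every maximum matching matches $A$ injectively into $D$ when the components of $G[D]$ are singletons.

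To establish $\vert D\vert \le \vert A\vert$, I would use that $D$ is independent to conclude $E_G(D,V(G)\setminus D) = E_G(D,A)$, and then double-count this edge set with the help of Vizing's Adjacency Lemma and its refinement via $\sigma(v,w)$ already developed in Section \ref{Edge-colorings}. Specifically, I would try to run the discharging scheme of Theorem \ref{LemmaEstimation} with the specialized witness $S = A$, $T = \Iso(G-A)\supseteq D$, exploiting the extra rigidity coming from the Gallai--Edmonds structure: each vertex of $A$ adjacent to a low-degree vertex of $D$ must, by Lemma \ref{VAL}, be flanked by at least $\Delta(G)-\delta+1$ vertices of maximum degree lying outside $D$, and these ``wasted'' incidences should shrink the ratio $\iso(G-A)/\vert A\vert$ from $\tfrac{3}{2}-\tfrac{1}{\Delta(G)}$ down to~$1$.

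The main obstacle is precisely this last reduction of the ratio. Theorem \ref{LemmaEstimation} gives only $\iso(G-S) < (\tfrac{3}{2}-\tfrac{1}{\Delta(G)})\vert S\vert$, which is far too loose to force $\vert D\vert \le \vert A\vert$; pushing it to the tight bound would require a genuinely new structural input about how the neighborhoods of vertices of $A$ overlap inside $D$ in the $D$-independent regime. This is, I suspect, exactly why the authors leave the statement as a conjecture, and it is the step on which any self-contained proof will stand or fall.
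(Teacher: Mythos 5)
The statement you were given is not a theorem of the paper but its closing conjecture: the paper offers no proof, only the remark that it would follow from Conjecture \ref{FPMC}. Your first paragraph reproduces that conditional implication correctly (a fractional perfect matching gives $\mu_f(G)=\tfrac{1}{2}\vert V(G)\vert$, while the absence of a $1$-factor forces $\mu(G)<\tfrac{1}{2}\vert V(G)\vert$), and your reformulation is sound: by the Liu--Liu criterion the conjecture is equivalent to showing that a $k$-critical graph with $D(G)$ independent has a $1$-factor, and with $D$ independent Theorem \ref{GE-decomposition}(4) gives $\deficiency(G)=\vert D\vert-\vert A\vert$ while part (3) gives $\vert A\vert\le\vert D\vert$ for free, so everything hinges on $\vert D\vert\le\vert A\vert$ (equivalently, $D=\emptyset$).

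The unconditional attack, however, does not close, and the gap you flag at the end is the whole problem rather than a finishing detail. By Theorem \ref{Scheinerman}, establishing $\iso(G-S)\le\vert S\vert$ for all $S$ is \emph{exactly} Conjecture \ref{FPMC}; your proposed specialization to the witness $S=A$ in the $D$-independent regime is formally weaker but still requires improving the constant of Theorem \ref{LemmaEstimation} from $\tfrac{3}{2}-\tfrac{1}{\Delta(G)}$ all the way to $1$, and nothing in the paper's discharging machinery (which is calibrated to Vizing's Adjacency Lemma and Lemma \ref{p_lemma}, and yields only $\min(G,K_{1,2})\le\tfrac{1}{5}\vert V(G)\vert$ in Theorem \ref{first_thm}) suggests such a sharpening is within reach; a genuinely new structural input would be needed. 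So your submission should be read as a correct proof of the conditional statement the paper already asserts, together with an honest and reasonable research plan for the open case --- not as a proof of the conjecture, which remains open in the paper as well.
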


\bibliographystyle{abbrv}

\bibliography{Literaturverzeichnis_Component_factors_of_critical_graphs}{}
\addcontentsline{toc}{section}{References}

\end{document}